\documentclass[a4paper,12pt]{amsart}

\usepackage{amssymb}
\usepackage{amsmath}
\usepackage{amsthm}
\usepackage{mathtools}
\usepackage{enumitem}
\usepackage{graphicx}
\usepackage{tikz}
\usepackage{mathrsfs}
\usepackage[mathscr]{euscript}
\usepackage{tkz-graph}

\usepackage{caption}
\usepackage[skip=1ex, belowskip=2ex]{subcaption}
\usepackage[export]{adjustbox}

\usetikzlibrary{arrows}
\usetikzlibrary{tikzmark}
\usetikzlibrary{cd}
\usetikzlibrary{calc}
\graphicspath{ {.} }

\usepackage[english]{babel}

\newcommand{\innerthmname}{}

\theoremstyle{definition}

\usetikzlibrary{positioning}
\newtheorem{theorem}[equation]{Theorem}
\newtheorem{lemma}[equation]{Lemma}

\newtheorem{corollary}[equation]{Corollary}

\theoremstyle{definition}
\newtheorem{definition}[equation]{Definition}

\theoremstyle{remark}
\newtheorem{example}[equation]{Example}
\newtheorem{remark}[equation]{Remark}

\numberwithin{equation}{section}
\usepackage{fancyhdr}
\usepackage{amsfonts}
\usepackage{hyperref}
\usepackage{graphicx, xcolor} 
\usepackage{amsthm}

\usepackage[
margin=1in,
marginpar=2cm,
includefoot,
footskip=30pt,
]{geometry}

\makeatletter
\@namedef{subjclassname@2020}{%
	\textup{2020} Mathematics Subject Classification}
\makeatother

\usepackage{scalerel,stackengine}
\stackMath
\newcommand\reallywidehat[1]{%
	\savestack{\tmpbox}{\stretchto{%
			\scaleto{%
				\scalerel*[\widthof{\ensuremath{#1}}]{\kern-.6pt\bigwedge\kern-.6pt}%
				{\rule[-\textheight/2]{1ex}{\textheight}}
			}{\textheight}%
		}{0.5ex}}%
	\stackon[1pt]{#1}{\tmpbox}%
}

\keywords{Partial skew group rings, partial actions, strongly $E^{\ast}$-unitary inverse semigroups, tight filters, combinatorial algebras}
\subjclass[2020]{Primary: 20M30. Secondary: 16S35, 16S88.}

\title[Partial Actions on Generalized Boolean Algebras]{Partial Actions on Generalized Boolean Algebras with Applications to Inverse Semigroups and Combinatorial $R$-Algebras}
\author[A. Zhang]{Allen Zhang}
\email{allenusca@gmail.com}

\begin{document}
	
	\begin{abstract}
		We define the notion of a partial action on a generalized Boolean algebra and associate to every such system and commutative unital ring $R$ an $R$-algebra. We prove that every strongly $E^{\ast}$-unitary inverse semigroup has an associated partial action on the generalized Boolean algebra of compact open sets of tight filters in the meet semilattice of idempotents. Using these correspondences, we associate to every strongly $E^{\ast}$-unitary inverse semigroup and commutative unital ring $R$ an $R$-algebra, and show that it is isomorphic to the Steinberg algebra of the tight groupoid. As an application, we show that there is a natural unitization operation on an inverse semigroup that corresponds to a unitization of the corresponding $R$-algebra. Finally, we show that Leavitt path algebras and labelled Leavitt path algebras can be realized as the $R$-algebra associated to a strongly $E^{\ast}$-unitary inverse semigroup.
	\end{abstract}
	\maketitle
	
	\section{Introduction}
	
	The interplay between inverse-semigroups and $C^{\ast}$-algebras is well-studied. Starting in \cite{Exel2007InverseSA}, Exel argued that there was arising a picture of forming a combinatorial $C^{\ast}$-algebra by passing through multiple stages. In particular, inverse semigroups were identified as a key object of interest. Inverse semigroups were first associated to a graph in Ash and Hall's paper \cite{Ash1975}, and many subsequent papers have studied the relation between graphs and their inverse semigroups more closely \cite{Ara2024InverseSO, bd76e05f99e74532899de92ac94a9f73, MEAKIN2021107729, Meakin21GraphInverse}. In \cite{Paterson1999}, Paterson developed theory associating a $C^{\ast}$-algebra to any inverse semigroup. 
	
	Seeking a more topological viewpoint, the tight filters and an associated topology on the set of tight filters was defined and studied by Exel and Lawson \cite{Exel2007TightRO, Lawson2011NonCommutativeSD}. In \cite{CARLSEN20162090}, Carlsen and Larsen showed that graph $C^{\ast}$-algebras could be realized from a topological partial action of a group on the tight filters of the associated inverse semigroup. 
	
	This theory was further pushed by characterizing the tight filters on the inverse semigroup associated to labelled spaces in \cite{Boava17Inverse} by Boava, de Castro, and Mortari. Using filter surgery with various gluing and cutting maps, a topological partial action that realized the $C^{\ast}$-algebra was defined using this characterization in \cite{deCastro2020} by de Castro and van Wyk. An analogous theory was developed by de Castro and Kang for the even more general generalized Boolean dynamical systems in \cite{Castro2022CalgebrasOG, DECASTRO2023126662}. 
	
	The resulting topological partial action was connected to the algebraic analog of the $C^{\ast}$-algebra by Boava, de Castro, Goncalves, and van Wyk in \cite{Boava2021LeavittPA}. We will primarily focus on the algebraic analog in this paper.
	
	A rich theory associating inverse semigroups, partial actions, and $C^{\ast}$-algebras was also developed in various papers \cite{Abadie2004OnPA, Kellendonk2004PartialAO, Milan2011ONIS}. In particular, Kellendonk and Lawson in \cite{Kellendonk2004PartialAO} defined a partial action on a strongly $E^{\ast}$-unitary inverse semigroup. Then, in Milan's thesis \cite{Milan2008Semigroup}, he used this partial action to realize the $C^{\ast}$-algebra of a strongly $E^{\ast}$-unitary inverse semigroup as a partial crossed product. Eventually, in \cite{Milan2011ONIS}, Milan and Steinberg further developed this theory by associating to a strongly $E^{\ast}$-unitary inverse semigroups a topological partial action, which realized the associated $C^{\ast}$-algebra.
	
	In this paper, we take a slightly different viewpoint of this process by inducing a partial action on a generalized Boolean algebra, rather than a topological space, from a strongly $E^{\ast}$-unitary inverse semigroup. We will show in a precise sense that this partial action on a generalized Boolean algebra is equivalent to the topological partial action on tight representations constructed in \cite{Milan2011ONIS}. However, there will be two notable differences in our presentation compared to Milan.
	
	The first will be the focus on the algebraic analog, rather than the $C^{\ast}$-algebra case. In most places, this will be a minor difference and one can compare the relevant sections in \cite{Boava2021LeavittPA} and \cite{deCastro2020} to see how the realization from the partial action differs in the $R$-algebra and $C^{\ast}$-algebra case respectively.
	
	The more important distinction will be to view the partial action as acting only on the generalized Boolean algebra of compact open sets. The main benefit that we derive from viewing the actions as on a generalized Boolean algebra, instead of a topological space,  is enlarging the set of natural morphisms between $R$-algebras that are induced from an underlying morphism of partial actions. In particular, we will show how natural subalgebras can arise nicely from a notion of a partial subaction on generalized Boolean algebras. As one application, we will showcase a natural unitization operation on a strongly $E^{\ast}$-unitary inverse semigroup that corresponds to a natural unitization operation on the associated $R$-algebra.
	
	Eventually, we will realize Leavitt path algebras and labelled Leavitt path algebras as the $R$-algebra associated to a strongly $E^{\ast}$-unitary inverse semigroup in the following diagram. 
	
		\tikzset{
		basic/.style={
			draw,
			rectangle,
			align=center,
			minimum width=8em,
			minimum height=3em,
			text centered
		},
		arrsty/.style={
			draw=black,
			-latex
		}
	}
	\begin{center}
		\begin{tikzpicture}[every node/.style={font=\sffamily}]
			\tikzset{every node}=[font=\tiny]
			
			\node[basic] (a) {Graph / \\ Labelled Space};
			\node[basic, right=of a] (b) {Strongly $E^{\ast}$-Unitary \\ Inverse Semigroup};
			\node[basic, right=of b] (c) {Generalized \\ Boolean Algebra \\ Partial Action };
			\node[basic, right=of c] (d) {Leavitt Path Algebra/ \\Labelled Leavitt Path Algebra};
			
			\draw[arrsty]   (a)    --  (b);
			\draw[arrsty]   (b)    --  (c);
			\draw[arrsty]   (c)    --  (d);
			
		\end{tikzpicture}
	\end{center}
	
	This resolution will be an explicit calculation of the action on tight representations found in \cite{Milan2011ONIS}. A benefit of our more general construction from more recent realizations of these algebras as partial crossed products using topological partial actions is avoiding both the characterization of filters as in \cite{Boava17Inverse, DECASTRO2023126662} and subsequent definition of cutting and gluing maps to do filter surgery as in \cite{Castro2022CalgebrasOG, deCastro2020}. As a result, our actions can be more easily described.
	
	The structure of the paper is as follows. In Section~\ref{booleanaction}, we define a partial action on a generalized Boolean algebra and associate to each such partial action and unital commutative ring $R$ an $R$-algebra. In Section~\ref{meetlatticetight}, we develop some theory on tight filters that will be applied to construct a partial action on a generalized Boolean algebra from a strongly $E^{\ast}$-unitary inverse semigroup. In Section~\ref{inversegroupaction}, we associate to a strongly $E^{\ast}$-unitary inverse semigroup a partial action on a generalized Boolean algebra and show that it is isomorphic to the Steinberg algebra $A_R(\mathcal G_{tight}(S))$. As an application, we will define a natural unitization operation on a strongly $E^{\ast}$-unitary inverse semigroup that will correspond to a natural unitization operation on its associated $R$-algebra. Finally, in Section~\ref{isomorphism}, we will show that graph and labelled space $R$-algebras can be realized from a strongly $E^{\ast}$-unitary inverse semigroup using our general construction.

	\section{Partial Actions on Generalized Boolean Algebras and an Associated Skew Group Ring}\label{booleanaction}
	
	In this section, we define the notion of a partial action of a group $G$ on a generalized Boolean algebra and associate to such actions a partial skew group ring.
	
	\subsection{Generalized Boolean Algebras and Partial Actions}\label{booleanalgebras}
	\begin{definition}
		$\mathcal B$ is a \textit{generalized Boolean algebra} if it is a lattice that is distributed, relatively complemented, and has a bottom element $0$. We denote the meet operation as $\land: \mathcal B \times \mathcal B \rightarrow \mathcal B$, the join operation as $\vee: \mathcal B \times \mathcal B \rightarrow \mathcal B$, and the relative complement operation as $\setminus: \mathcal B \times \mathcal B \rightarrow \mathcal B$. If $\mathcal B$ has a top element, then it is \textit{unital} and called a \textit{Boolean algebra}. Morphisms of generalized Boolean algebras are maps of lattices that preserve the operations and bottom element.
	\end{definition}

	\begin{example}
		The finite subsets of $\mathbb N$ form a generalized Boolean algebra that is not a Boolean algebra.
		
	\end{example}
	\begin{example}
		For Hausdorff topological space $X$, the set of compact open sets denoted at $\mathcal O_c(X)$ forms a generalized Boolean algebra with joins as unions, meets as intersections, and relative complements as set subtraction.
	\end{example}
	
	\begin{example}
		Let $X_1$ and $X_2$ be Hausdorff spaces and let $f: X_1 \rightarrow X_2$ be a proper continuous map of topological spaces. There is a generalized Boolean algebra morphism $\mathcal O_c(X_2) \rightarrow \mathcal O_c(X_1)$ induced by the preimage map $U \mapsto f^{-1}(U)$.
	\end{example}
	
	\begin{definition}	
		A subset $\mathcal I \subseteq \mathcal B$ is called an \textit{ideal} if for any $a, b \in \mathcal I$, we have that $a \vee b \in \mathcal I$ and for any $a \in \mathcal I$ and $b \in \mathcal B$, we have that $a \wedge b \in \mathcal I$.
	\end{definition}

	Note that ideals of generalized Boolean algebras are generalized Boolean algebras in their own right as the relative complement is well-defined within the ideal.
	
	\begin{example} \label{ex:opencontains}
		For a Hausdorff topological space $X$ and the generalized Boolean algebra of compact open sets $\mathcal O_c(X)$, for any open set $U \subseteq X$ the set $\mathcal O_c(X)(U) = \{A \in \mathcal O_c(X) \colon A \subseteq U\}$ is an ideal of $\mathcal O_c(X)$.
	\end{example}
	
	\begin{remark}
		Generalized Boolean algebras have a natural order defined by \[x \leq y \Leftrightarrow x \wedge y = x\]
	\end{remark}

	\begin{definition}
		A subset $S \subseteq \mathcal B$ is called a \textit{cover} of $\mathcal B$ if for every $x \in \mathcal B$ there exists some $s_1, \ldots, s_n \in \mathcal S$ such that $x \leq \bigvee_{i=1}^n s_i$.
		
		A subset $S \subseteq \mathcal B$ \textit{generates} $\mathcal B$ if all elements of $\mathcal B$ can be written as the result of some finite sequence of operations involving elements from $S$. In other words, $S$ is contained in no proper generalized Boolean subalgebra of $\mathcal B$.
	\end{definition}
	
	\begin{example}
		Let $\mathbb Z$ be the set of integers and let $F(\mathbb Z)$ be the generalized Boolean algebra of finite subsets of $\mathbb Z$. Then $F(\mathbb Z)$ is covered by the subset $S = \{\{x, -x\}\}_{x \in \mathbb Z}$ because every finite subset of $X \in F(\mathbb Z)$ can be contained in the set $\bigcup_{x \in X} \{x, -x\}$. However, $S$ does not generate $F(\mathbb Z)$ because we cannot form singleton elements like $\{x\}$ for $x \neq 0$.
	\end{example}

	\begin{definition} \label{booldef}
		A partial action of a group $G$ on a generalized Boolean algebra $\mathcal B$ is a pair $\Phi = (\{\mathcal I_t\}_{t \in G}, \{\phi_t\}_{t \in G})$ consisting of ideals $\mathcal I_t \subseteq \mathcal B$ and generalized Boolean algebra isomorphisms $\phi_t: \mathcal I_{t^{-1}} \rightarrow \mathcal I_t$ such that
		
		\begin{enumerate}
			\item $\mathcal I_e = \mathcal B$ and $\phi_e$ is the identity on $\mathcal B$
			\item $\phi_s(\mathcal I_{s^{-1}} \cap \mathcal I_t) = \mathcal I_s \cap \mathcal I_{st}$ for all $s, t \in G$
			\item $\phi_s(\phi_t(x)) = \phi_{st}(x)$ for $x \in \mathcal I_{s^{-1}} \cap \mathcal I_{(st)^{-1}}$ for all $s, t \in G$
		\end{enumerate}
	\end{definition}
	
	We refer to $(\mathcal B, \Phi)$ as a partial action of $G$ on a generalized Boolean algebra if the underlying generalized Boolean algebra in $\Phi$ is $\mathcal B$ and the underlying group is $G$.
	
	\begin{example} \label{example:stonedualaction}
	We now briefly describe how our definition of a partial action on a generalized Boolean algebra both induces and can be induced from a partial action on a Hausdorff topological space. In some sense, this is an analog of Stone duality for partial actions on generalized Boolean algebras. For the definition of a topological partial action, see \cite[Definition~3.1]{deCastro2020}.

	 We first describe how a partial action on a generalized Boolean algebra can be induced from a partial action on a Hausdorff topological space.	
		
	Let $(\{V_t\}_{t \in G}, \{\phi_t\}_{t \in G})$ be a partial action on a Hausdorff topological space $X$. Because $V_t$ is an open subset of $X$, open compact sets of $V_t$, denoted as $\mathcal O_c(V_t)$, are exactly the open compact sets of $X$ contained within $V_t$, denoted as $\mathcal O_c(X)(V_t)$ (see Example~\ref{ex:opencontains}). Hence, we can view $\mathcal O_c(V_t) = \mathcal O_c(X)(V_t) \subseteq \mathcal O_c(X)$ as an ideal. 
	
	Furthermore, there is a natural isomorphism \[\tilde{\phi}_t: \mathcal O_c(V_{t^{-1}}) \rightarrow \mathcal O_c(V_t)\] between the generalized Boolean algebras of compact open sets as $\phi_t: V_{t^{-1}} \rightarrow V_t$ is an isomorphism of topological spaces. Note importantly that we do not ``pullback'' the map $\phi_t$, which would cause the morphism $\tilde{\phi_t}$ to go from $\mathcal O_c(V_t) \rightarrow \mathcal O_c(V_{t^{-1}})$. We then get a partial action on the generalized Boolean algebra $\mathcal O_c(X)$ with actions $\mathcal O_c(\Phi) = (\{\mathcal O_c(V_t)\}_{t \in G}, \{\tilde{\phi}_t\}_{t \in G})$.

	We next describe how a partial action on a generalized Boolean algebra induces a partial action on a topological space. Here, we make use of the Stone dual $X(\mathcal B)$ of a generalized Boolean algebra which is described in \cite[Section~2.4]{lawson2022non}.
		
	Let $(\{\mathcal I_t\}_{t \in G}, \{\phi_t\}_{t \in G})$ a partial action on a generalized Boolean algebra $\mathcal B$. The Stone dual $X(\mathcal B)$ is a Hausdorff topological space where the compact open sets correspond exactly to $\mathcal B$. Furthermore, the Stone duals of ideals $\mathcal I_t \subseteq \mathcal B$ correspond naturally to an open set $X(\mathcal I_t) \subseteq X(\mathcal B)$.
	
	The isomorphisms $\phi_t: \mathcal I_{t^{-1}} \rightarrow \mathcal I_t$ then induce an isomorphism on their respective Stone duals \[\tilde{\phi}_t: X(\mathcal I_{t^{-1}}) \rightarrow X(\mathcal I_t)\] We then get a partial action on the topological space $X(\mathcal B)$ with actions $X(\Phi) = (\{X(\mathcal I_t)\}_{t \in G}, \{\tilde{\phi}_t\}_{t \in G})$.

	\end{example}

	In our next definition, we present an analog of an orthogonal and semi-saturated partial action on a topological space. Assume that $G = \mathbb F[\mathcal A]$ is a free group with generators $\mathcal A$.
	
	\begin{definition} A partial action on a generalized Boolean algebra is said to be 
		\begin{enumerate} 
			\item \textit{orthogonal} if $\mathcal I_a \cap \mathcal I_b = \{0\}$ for $a \neq b \in \mathcal A$.
			\item \textit{semi-saturated} if $\phi_s \circ \phi_t = \phi_{st}$ for $s, t \in \mathbb F[\mathcal A]$ with $|s+t| = |s| + |t|$.
		\end{enumerate}
	\end{definition}
	
	\begin{remark}[{\cite[Proposition~4.1]{Exel2003}}] \label{equivsemi}
		The condition of being semi-saturated is equivalent to $\mathcal I_{st} \subseteq \mathcal I_{s}$ for any $s, t \in \mathbb F[\mathcal A]$ with $|s+t| = |s| + |t|$.
	\end{remark}

	\begin{definition} \label{def:booleanmorph}
		Let $(\mathcal B_1, \Phi_1)$ and $(\mathcal B_2, \Phi_2)$ be two partial actions of $G$ on generalized Boolean algebras. We define morphisms from $(\mathcal B_1, \Phi_1) \rightarrow (\mathcal B_2, \Phi_2)$ as maps $f: \mathcal B_1 \rightarrow \mathcal B_2$ that satisfy the following properties.
	\end{definition}
	\begin{enumerate}
		\item $f$ is a generalized Boolean algebra morphism $\mathcal B_1 \rightarrow \mathcal B_2$
		\item $f(\mathcal I_{1, t}) \subseteq \mathcal I_{2, t}$ where $\Phi_1 = (\{\mathcal I_{1, t}\}_{t \in G}, (\phi_{1, t})_{t \in G})$ and $\Phi_2$ is denoted similarly
		\item $f$ commutes with the actions $\Phi_1$ and $\Phi_2$ in the sense of the following diagram for all $t \in G$
			\begin{center}
		\begin{tikzcd}
			\mathcal I_{1, t^{-1}} \arrow[r, "f"] \arrow[d, "\phi_{1, t}"] & \mathcal I_{2, t^{-1}} \arrow[d, "\phi_{2, t}"] \\
			\mathcal I_{1, t} \arrow[r, "f"] & \mathcal I_{2, t} \\
		\end{tikzcd}
	\end{center}
	\end{enumerate}
	
	\subsection{An Associated Partial Skew Group Ring}
	Let $R$ be a unital commutative ring and $\mathcal B$. In this section, to a partial action of $G$ on $\mathcal B$, we associate an associative $G$-graded $R$-algebra in a similar fashion to the partial skew group ring associated to a topological partial action (see \cite[Section~4]{Boava2021LeavittPA}).
	
	\begin{definition}
		
		We define the $R$-algebra of compactly supported $\mathcal B$-functions as the set of set-theoretic functions \[\mathrm{Lc}(R,  \mathcal B) = \{f: R \setminus \{0\} \rightarrow \mathcal B \colon f(a) \wedge f(b) \neq \emptyset \Leftrightarrow a = b, f(a) = 0 \text { for all but finitely many } a \in R \setminus \{0\}\}\]
		
		For $f \in \mathrm{Lc}(R,  \mathcal B)$, define $\mathrm{dom}(f) = \bigvee_{r \in R \setminus \{0\}} f(r)$.
		
		We define an action of $r \in R$, addition, and multiplication as follows.
		
		\[(rf)(x) = \bigvee\limits_{y \in R \setminus \{0\} \colon ry = x, f(y) \neq 0} f(y) \neq 0\]\[(f+g)(x) = (f(x) \setminus \mathrm{dom}(g)) \vee (g(x) \setminus \mathrm{dom}(f))\bigvee\limits_{r \in R \setminus \{0, x\}} (f(r) \vee g(x-r))\] \[(fg)(x) = \bigvee\limits_{r_1, r_2 \in R \setminus \{0\} \colon r_1r_2 = x} (f(r_1)\wedge f(r_2))\]
	\end{definition}
	
	In the below lemma, we give a more natural description of the $R$-algebra $\mathrm{Lc}(R,  \mathcal B)$.
	
	\begin{lemma} \label{stonedual}
		Let $X(\mathcal B)$ be the Stone dual of $\mathcal B$. Then $\mathrm{Lc}(R,  \mathcal B)$ is isomorphic as an $R$-algebra to the set of locally constant functions with compact support from $X(\mathcal B) \rightarrow R$ where $R$ has the discrete topology.
	\end{lemma}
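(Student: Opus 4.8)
The plan is to build the isomorphism by hand on a natural spanning set and then check it respects the algebra structure. Recall the Stone dual $X(\mathcal B)$ has the property that its compact open sets correspond bijectively and lattice-theoretically to $\mathcal B$; write $U_a \subseteq X(\mathcal B)$ for the compact open set corresponding to $a \in \mathcal B$, so that $U_{a \wedge b} = U_a \cap U_b$, $U_{a \vee b} = U_a \cup U_b$, $U_{a \setminus b} = U_a \setminus U_b$, and $U_0 = \emptyset$. Denote by $C_c(X(\mathcal B), R)$ the locally constant compactly supported functions into discrete $R$. The map I would construct sends $f \in \mathrm{Lc}(R, \mathcal B)$ to the function $\Psi(f) = \sum_{r \in R \setminus \{0\}} r \cdot \chi_{U_{f(r)}}$, where $\chi$ denotes the characteristic function. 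This is well-defined: the defining condition $f(a) \wedge f(b) \neq 0 \Leftrightarrow a = b$ says the supports $U_{f(r)}$ are pairwise disjoint, so at each point $x \in X(\mathcal B)$ at most one term is nonzero and $\Psi(f)(x)$ is unambiguously an element of $R$; since only finitely many $f(r)$ are nonzero and each $U_{f(r)}$ is compact open, $\Psi(f)$ is locally constant with compact support.

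Next I would check $\Psi$ is a bijection. For injectivity, note that $f$ is recovered from $\Psi(f)$ by $f(r) = $ (the element of $\mathcal B$ corresponding to) $\Psi(f)^{-1}(\{r\})$, using that $\Psi(f)^{-1}(\{r\})$ is compact open — it is closed since its complement is a finite union of compact opens $\Psi(f)^{-1}(\{r'\})$ together with $\Psi(f)^{-1}(\{0\})$, which is open as $\Psi(f)$ is locally constant, and... more carefully: $\Psi(f)^{-1}(\{r\})$ is open by local constancy, and its complement in the (compact open) support of $\Psi(f)$ is the union of the finitely many other fibers, each open, hence $\Psi(f)^{-1}(\{r\})$ is also closed in the support, thus compact open in $X(\mathcal B)$. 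For surjectivity, given $g \in C_c(X(\mathcal B), R)$, the range of $g$ on its support is a finite subset $\{r_1, \dots, r_k\}$ of $R \setminus \{0\}$, the fibers $g^{-1}(\{r_i\})$ are compact open and pairwise disjoint, so setting $f(r_i)$ to the corresponding element of $\mathcal B$ and $f(r) = 0$ otherwise gives an element of $\mathrm{Lc}(R, \mathcal B)$ with $\Psi(f) = g$.

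Finally I would verify $\Psi$ is an $R$-algebra homomorphism by matching the three operations. For the $R$-action: $\Psi(rf)(x) = r \cdot \Psi(f)(x)$ follows since $(rf)(y) = \bigvee_{ry' = y} f(y')$ collects exactly the fibers that get scaled into $y$; here one uses that the $f(y')$ with $ry' = y$ have disjoint supports, so the join corresponds to a disjoint union of fibers, and pointwise the value is $r$ times the old value (with the convention that a point outside all supports maps to $0 = r \cdot 0$). For multiplication: $\Psi(fg)(x) = \sum_{r_1 r_2 = x} \chi_{U_{f(r_1) \wedge f(r_2)}}(x)$ at a point $y$ — wait, one must read the definition as $(fg)(x) = \bigvee_{r_1 r_2 = x}(f(r_1) \wedge g(r_2))$, the evident typo $f(r_2)$ notwithstanding — and at a point $y \in X(\mathcal B)$, $\Psi(fg)(y) = x$ iff $y \in U_{f(r_1)} \cap U_{g(r_2)}$ for the unique pair with $r_1 r_2 = x$, i.e. iff $\Psi(f)(y) = r_1$ and $\Psi(g)(y) = r_2$, which is exactly $(\Psi(f) \cdot \Psi(g))(y) = x$; disjointness of supports guarantees the pair is unique so the join is again a disjoint union. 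For addition: $(f+g)(x)$ is designed to compute, at each point $y$, the sum $\Psi(f)(y) + \Psi(g)(y)$ — the term $f(x) \setminus \mathrm{dom}(g)$ handles points where only $f$ is supported, $g(x) \setminus \mathrm{dom}(f)$ the dual, and the cross terms $f(r) \wedge g(x - r)$ handle points where $\Psi(f)(y) = r$ and $\Psi(g)(y) = x - r$ are both nonzero and sum to $x$. The main obstacle is purely bookkeeping: confirming that in each formula the joins really are over families with pairwise disjoint support (so they translate to disjoint unions of fibers and hence to pointwise sums/values without overlap), and checking the output of each operation again satisfies the disjointness axiom of $\mathrm{Lc}(R,\mathcal B)$ — but this last point is automatic once one knows $\Psi$ lands in $C_c(X(\mathcal B), R)$ and is a bijection, since the target is manifestly an $R$-algebra. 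I would therefore organize the proof as: (1) define $\Psi$ and show well-definedness; (2) show bijectivity via the explicit fiber-wise inverse; (3) transport the (obvious) $R$-algebra structure of $C_c(X(\mathcal B), R)$ through $\Psi^{-1}$ and observe it coincides with the defined operations on $\mathrm{Lc}(R, \mathcal B)$, which simultaneously proves those operations are well-defined and that $\Psi$ is an isomorphism.
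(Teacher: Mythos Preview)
Your proposal is correct and takes essentially the same approach as the paper: construct the explicit bijection $f \leftrightarrow \sum_{r \neq 0} r\,\chi_{U_{f(r)}}$ using the Stone correspondence between $\mathcal B$ and compact open subsets of $X(\mathcal B)$, then verify the operations match. The only differences are cosmetic---the paper starts from the $C_c(X(\mathcal B),R)$ side and builds the fiber map $r \mapsto f^{-1}(r)$ first, and it dispatches the algebra-structure check with a single ``it's easy to see,'' whereas you spell out the pointwise verification (and correctly flag the $f(r_2)$ typo in the multiplication formula).
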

	
	\begin{proof}
		Let $\mathrm{Lc}(X(\mathcal B), R)$ denote the locally constant functions with compact support on $X(\mathcal B)$. Our goal is to prove that $\mathrm{Lc}(X(\mathcal B), R) \cong \mathrm{Lc}(R, \mathcal B)$.
		
		By Stone duality, $X(\mathcal B)$ is Hausdorff and there is a generalized Boolean algebra isomorphism \[\mathcal O_c(X(\mathcal B)) \xrightarrow{\sim} \mathcal B\]
		
		We first draw a bijection between the sets $\mathrm{Lc}(X(\mathcal B), R)$ and $\mathrm{Lc}(R, \mathcal B)$. 
		
		Let $f \in \mathrm{Lc}(X(\mathcal B), R)$. We associate to $f$ the function from $R \setminus \{0\} \rightarrow \mathcal B$ defined by $r \mapsto f^{-1}(r)$ for $r \in R \setminus \{0\}$. Note that $\{r\} \subseteq R$ is closed by definition, hence $f^{-1}(r)$ must be closed. Because $f$ has compact support, $f^{-1}(r)$ is a closed set in a compact space, so, because $X(\mathcal B)$ is Hausdorff, $f^{-1}(r)$ is compact. Because $f$ is locally constant, $f^{-1}(r)$ is also open. Thus, we conclude that $f^{-1}(r) \in \mathcal O_c(X(\mathcal B))$. 
		
		Obviously, all $\{f^{-1}(r)\}_{r \in R \setminus \{0\}}$ are pairwise disjoint. To prove that only a finite number of $f^{-1}(r)$ can be not the empty set, assume for the sake of contradiction that there are an infinite number $S = \{r_1, r_2, \ldots\}$ such that $f^{-1}(r_i) \neq \emptyset$ and $r_i \neq 0$. Then we note that $\{r_1, r_2, \ldots\}$ is closed as well, so $\bigcup_{r \in S}  f^{-1}(r)$ is closed. Because $f$ has compact support, $\bigcup_{r \in S} f^{-1}(r)$ is compact. However, each $f^{-1}(r)$ is an open set as well, hence we can cover our compact set as the disjoint infinite union of non-empty open sets, which is a contradiction. Hence, our map is well-defined.
		
		To construct the inverse, for some $f \in \mathrm{Lc}(R, \mathcal B)$, we can associate to $f$ the function from $X(\mathcal B) \rightarrow R$ by taking the sum $\sum_{r \in R \setminus \{0\}} r 1_{f(r)}$ on $X(\mathcal B)$, interpreting $f(r) \in \mathcal O_c(X(\mathcal B))$. It's obvious this function is in $\mathrm{Lc}(X(\mathcal B), R)$ because $r1_{f(r)}$ is in $\mathrm{Lc}(X(\mathcal B), R)$.
		
		The proof that these maps are inverses of each other is easy. Furthermore, it's easy to see that operations in $\mathrm{Lc}(R, \mathcal B)$ are equivalent to the corresponding operations in the function space $\mathrm{Lc}(X(\mathcal B), R)$, so we are done.
		
	\end{proof}
	
	\begin{definition}
		For $U \in \mathcal B$, define $1_U \in \mathrm{Lc}(R,  \mathcal B)$ to be the function $R \setminus \{0\} \rightarrow \mathcal B$ that takes $1 \mapsto U$ and all other elements to $0$.
	\end{definition}
	
	\begin{lemma}
		The set $\{1_U\}_{U \in \mathcal B}$ $R$-spans the $R$-algebra $\mathrm{Lc}(R,  \mathcal B)$.
	\end{lemma}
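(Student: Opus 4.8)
The plan is to exhibit an arbitrary $f \in \mathrm{Lc}(R, \mathcal B)$ as an explicit $R$-linear combination of the generators. By the defining finiteness condition, only finitely many values $f(r)$ with $r \in R\setminus\{0\}$ are nonzero; enumerate these as $r_1,\dots,r_n$ and set $U_i := f(r_i) \in \mathcal B$. I claim $f = \sum_{i=1}^{n} r_i\, 1_{U_i}$, which immediately gives the result since each $r_i\,1_{U_i}$ lies in the $R$-span of $\{1_U\}_{U\in\mathcal B}$.

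First I would unwind the scalar action to identify $r_i\,1_{U_i}$ concretely. Since $1_{U_i}(y)\neq 0$ only when $y=1$, the formula $(rg)(x) = \bigvee_{y\colon ry=x,\, g(y)\neq 0} g(y)$ shows that $r_i\,1_{U_i}$ is the function sending $r_i \mapsto U_i$ and every other element of $R\setminus\{0\}$ to $0$; in particular $\mathrm{dom}(r_i\,1_{U_i}) = U_i$. The essential algebraic input is that the values $U_1,\dots,U_n$ are pairwise disjoint: by the defining property of $\mathrm{Lc}(R,\mathcal B)$, $U_i \wedge U_j = f(r_i)\wedge f(r_j) = 0$ for $i\neq j$, and by distributivity of $\mathcal B$ this propagates to $U_n \wedge \bigvee_{i<n} U_i = 0$.

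Next I would prove $f = \sum_{i=1}^n r_i\,1_{U_i}$ by induction on $n$, using the formula for $(g+h)(x)$. At each step, writing $h$ for the partial sum (which by induction sends $r_i\mapsto U_i$ for $i<n$ and everything else to $0$, with $\mathrm{dom}(h)=\bigvee_{i<n}U_i$), one evaluates $(h + r_n 1_{U_n})(x)$ case by case: for $x=r_i$ with $i<n$ the relative-complement corrections by $\mathrm{dom}(r_n 1_{U_n})=U_n$ do nothing because $U_i\wedge U_n = 0$; for $x=r_n$ the correction by $\mathrm{dom}(h)$ does nothing because $U_n\wedge\bigvee_{i<n}U_i=0$; and in every case each summand $\bigvee_{r}\big(h(r)\wedge (r_n 1_{U_n})(x-r)\big)$ of the cross-term is a meet of two disjoint $U$'s, hence $0$. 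What remains is exactly the function $r_i\mapsto U_i$ ($1\le i\le n$), i.e. $f$. This piecewise bookkeeping against the three-term addition formula is the only mildly fiddly part of the argument, but it is entirely routine given the disjointness of the $U_i$.

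Finally, as a cross-check — and a cleaner route if one prefers — I would observe that the statement also follows at once from Lemma~\ref{stonedual}: under the isomorphism $\mathrm{Lc}(R,\mathcal B)\cong \mathrm{Lc}(X(\mathcal B),R)$, the element $1_U$ corresponds to the indicator function of the compact open set $U\subseteq X(\mathcal B)$, and any locally constant compactly supported $f\colon X(\mathcal B)\to R$ decomposes as the finite sum $\sum_i r_i\, 1_{f^{-1}(r_i)}$ over its finitely many nonzero values (the finiteness being precisely the compactness-of-support argument already carried out in the proof of Lemma~\ref{stonedual}). I do not expect any genuine obstacle here; the content is just translating "a locally constant compactly supported function is a finite sum of scaled indicators" through the dictionary of the previous lemmas.
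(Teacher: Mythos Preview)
Your proposal is correct and takes essentially the same approach as the paper: both write $f = \sum_{i} r_i\, 1_{f(r_i)}$ over the finitely many $r_i$ with $f(r_i)\neq 0$. The paper's proof is a single line asserting exactly this identity, whereas you additionally unpack the scalar-action and addition formulas to verify it against the definitions (and offer the Stone-dual route as a cross-check); this extra care is fine but not required.
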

	\begin{proof}
		For $f \in \mathrm{Lc}(R, \mathcal B)$, we have that $f = \sum_{r \in R \setminus \{0\}, f(r) \neq 0} r 1_{f(r)}$ where the sum is finite by definition of $\mathrm{Lc}(R, \mathcal B)$.
	\end{proof}

	\begin{lemma}\label{idealfunction}
		For any ideal $\mathcal I \subseteq \mathcal B$, there is a natural injection of $R$-algebras $\mathrm{Lc}(R, \mathcal I) \hookrightarrow \mathrm{Lc}(R,  \mathcal B)$ composing $f: R \setminus \{0\} \rightarrow \mathcal I $ with the injection $\mathcal I \hookrightarrow \mathcal B$. Furthermore, $\mathrm{Lc}(R, \mathcal I) \subseteq \mathrm{Lc}(R,  \mathcal B)$ is an ideal as rings.
	\end{lemma}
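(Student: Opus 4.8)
The plan is to argue directly from the definitions, leaning on the remark following the definition of an ideal: an ideal $\mathcal I \subseteq \mathcal B$ is itself a generalized Boolean algebra whose meet, join, and relative complement are the restrictions of those of $\mathcal B$. Indeed, $\mathcal I$ is closed under $\vee$ and under $\wedge$ with arbitrary elements of $\mathcal B$ by definition, and if $a, b \in \mathcal I$ then $a \setminus b \leq a$ forces $a \setminus b \in \mathcal I$. So the first step is to observe that a function $f \colon R \setminus \{0\} \to \mathcal I$ lies in $\mathrm{Lc}(R, \mathcal I)$ exactly when the composite $R \setminus \{0\} \to \mathcal I \hookrightarrow \mathcal B$ lies in $\mathrm{Lc}(R, \mathcal B)$: the disjointness condition $f(a) \wedge f(b) \neq 0 \Leftrightarrow a = b$ is unaffected because $\wedge$ is computed the same way in $\mathcal I$ and in $\mathcal B$, and the finite-support condition is verbatim the same. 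This gives a set map $\iota \colon \mathrm{Lc}(R, \mathcal I) \to \mathrm{Lc}(R, \mathcal B)$, which is injective since $\mathcal I \hookrightarrow \mathcal B$ is.

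Next I would check that $\iota$ is an $R$-algebra morphism. For $f \in \mathrm{Lc}(R, \mathcal I)$ one has $\mathrm{dom}(f) = \bigvee_r f(r) \in \mathcal I$, and the defining formulas for $rf$, $f + g$, and $fg$ are assembled solely from $\vee$, $\wedge$, and $\setminus$ applied to the values $f(r), g(r)$ and to $\mathrm{dom}(f), \mathrm{dom}(g)$. Since each of these operations agrees in $\mathcal I$ and in $\mathcal B$, evaluating any of $rf$, $f + g$, $fg$ inside $\mathrm{Lc}(R, \mathcal I)$ and then applying $\iota$ yields the same result as first applying $\iota$ and then evaluating inside $\mathrm{Lc}(R, \mathcal B)$. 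This is a routine term-by-term comparison, so I would present it compactly rather than expand every case.

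Finally, for the ideal claim, take $f$ in the image of $\iota$ and $g \in \mathrm{Lc}(R, \mathcal B)$. Each value $(fg)(x)$ is a finite join of meets of a value of $f$ with a value of $g$; since $f(r_1) \in \mathcal I$ and $\mathcal I$ is an ideal, every such meet lies in $\mathcal I$, whence $(fg)(x) \in \mathcal I$ for all $x$. As $fg$ already belongs to $\mathrm{Lc}(R, \mathcal B)$ (multiplication being well defined there), it therefore belongs to $\mathrm{Lc}(R, \mathcal I)$; the symmetric computation handles $gf$, so $\mathrm{Lc}(R, \mathcal I)$ is a two-sided ideal of $\mathrm{Lc}(R, \mathcal B)$. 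There is essentially no obstacle in this proof; the single point worth isolating is that relative complements restrict to $\mathcal I$, which is exactly the cited remark. As an alternative, one could route the whole argument through Lemma~\ref{stonedual}: the ideal $\mathcal I$ corresponds under Stone duality to an open subset $X(\mathcal I) \subseteq X(\mathcal B)$ with $\mathrm{Lc}(R, \mathcal I) \cong \mathrm{Lc}(X(\mathcal I), R)$, and extension by zero realizes this as an ideal of $\mathrm{Lc}(X(\mathcal B), R) \cong \mathrm{Lc}(R, \mathcal B)$.
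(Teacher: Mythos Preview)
Your proof is correct and follows essentially the same line as the paper's. The only stylistic difference is in the ideal check: the paper first invokes the spanning set $\{1_U\}_{U \in \mathcal B}$ and verifies $1_U \cdot 1_V = 1_{U \wedge V} \in \mathrm{Lc}(R,\mathcal I)$ on generators, whereas you compute $(fg)(x)$ directly from the multiplication formula; both reduce to the same observation that $f(r_1) \wedge g(r_2) \in \mathcal I$ whenever $f(r_1) \in \mathcal I$.
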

	
	\begin{proof}
		It's obvious that the mapping is injective and is a morphism of $R$-algebras, so it just remains to prove that $\mathrm{Lc}(R, \mathcal I) \subseteq \mathrm{Lc}(R,  \mathcal B)$ is an ideal.
		
		From the previous lemma, $\{1_U\}_{U \in \mathcal I}$ $R$-spans $\mathrm{Lc}(R, \mathcal I)$ and $\{1_V\}_{V \in \mathcal B}$ $R$-spans $\mathrm{Lc}(R, \mathcal B)$. Thus, we just need to check that for any $U \in \mathcal I$ and any $V \in \mathcal B$ we have that $1_U \cdot 1_V, 1_V \cdot 1_U \in \mathrm{Lc}(R, \mathcal I)$. It's easy to check that $1_U \cdot 1_V = 1_{U \wedge V} = 1_V \cdot 1_U$.  If $\mathcal I \subseteq \mathcal B$ is an ideal and $U \in \mathcal I$, then $U \wedge V = V \wedge U \in \mathcal I$ so $1_{U \wedge V} \in \mathrm{Lc}(R, \mathcal I)$.
	\end{proof}
	
	\begin{remark}
		Continuing the parallel with Stone duality from Lemma~\ref{stonedual}, $\mathrm{Lc}(R, \mathcal I)$ can be thought of the functions of $\mathrm{Lc}(X(B), R)$ that are supported within the open set $X(\mathcal I) \subseteq X(\mathcal B)$.
	\end{remark}

	\begin{definition}
		Let $\Phi = (\{\mathcal I_t\}_{t \in G}, \{\phi_t\}_{t \in G})$ be a partial action on $\mathcal B$. We define a dual partial action of $G$ on $\mathrm{Lc}(R,  \mathcal B)$ with (ring) ideals $D_t = \mathrm{Lc}(R, \mathcal I_t)$ (by Lemma~\ref{idealfunction}). We define an $R$-algebra isomorphism $\tilde{\phi_t}: D_{t^{-1}} \rightarrow D_t$ by \[\tilde{\phi_t}(f) = \phi_t \circ f\]
		
		This induces the partial skew group ring \[\mathrm{Lc}(R,  \mathcal B) \rtimes_{\Phi} G = \left\{\sum_{t \in G} f_t \delta_t \colon f_t \in D_t,  f_t = 0 \text{ for all but a finite number of } t \in G\right\}\] where multiplication is defined by $(a \delta_s)(b \delta_{t}) = \tilde{\phi}_s(\tilde{\phi}_{s^{-1}}(a)b)\delta_{st}$.
	\end{definition}
	
	\begin{remark}
		$\mathrm{Lc}(R,  \mathcal B) \rtimes_{\Phi} G$ has a natural $G$-grading given by $a \delta_g \mapsto g$.
	\end{remark}

	\begin{remark}  \label{fulldual}
		For a Hausdorff topological space $X$, we have that the $R$-algebra of locally constant functions with compact support $\mathrm{Lc}(X, R)$ is isomorphic to our $R$-algebra $\mathrm{Lc}(R, \mathcal O_c(X))$. Furthermore, for any topological partial action, we can construct a similar dual action on $\mathrm{Lc}(X, R)$ (see \cite[Section~4]{Boava2021LeavittPA} for an example).
		
		If we have a topological partial action $(X, \Phi)$ and get a partial action $(\mathcal O_c(X), \mathcal O_c(\Phi))$ on a generalized Boolean algebra using Example~\ref{example:stonedualaction}, it's not difficult to show that their dual actions on $\mathrm{Lc}(X, R) \cong \mathrm{Lc}(R, \mathcal O_c(X))$ are equivariant. Hence, the resulting $R$-algebras are isomorphic \[\mathrm{Lc}(X, R) \rtimes G \cong \mathrm{Lc}(R, \mathcal O_c(X)) \rtimes G\]
		
		With similar reasoning, if we start with a partial action on a generalized Boolean algebra $(\mathcal B, \Phi)$ and construct the action on the Stone dual $(X(\mathcal B), X(\Phi))$, we have that \[\mathrm{Lc}(X(\mathcal B), R) \rtimes G \cong \mathrm{Lc}(R, \mathcal B) \rtimes G\]
		
		Hence, if the $R$-algebra is the only object of interest then there is no reason to discuss partial actions on generalized Boolean algebras rather than partial actions on Hausdorff topological spaces. However, we stay in the language of generalized Boolean algebras as it is often the case that maps between $R$-algebras that we are interested in can be best understood as maps of generalized Boolean algebras, rather than topological spaces.
	\end{remark}
	
	From now on, let $e \in G$ refer to the unit of $G$. As shorthand, for $g \in G$ and $U \in \mathcal I_g$, we refer to $1_U \delta_g \in \mathrm{Lc}(R, \mathcal B) \rtimes_{\Phi} G$ as $U \delta_g$.
	
	\begin{lemma} \label{computationskew}
		Below is a collection of some computations which will be used throughout.
		
		\begin{enumerate}
			\item $\mathrm{Lc}(R,  \mathcal B) \rtimes_{\Phi} G$ is generated by the $R$-span of elements of the form $\{U \delta_g\}_{g \in G, U \in \mathcal I_g}$.
			\item $1_U 1_V = 1_{U \wedge V}$ for $U, V \in \mathcal B$
			\item $\tilde{\phi}_{g}(1_U) = 1_{\phi_{g}(U)}$ for $g \in G$ and $U \in \mathcal I_{g^{-1}}$
			\item $(U \delta_{g})(V \delta_{g'}) = \phi_g(V \wedge \phi_{g^{-1}}(U)) \delta_{gg'}$ for $U \in \mathcal I_g$ and $V \in \mathcal I_{g'}$.
			\item $(U \delta_e)(V \delta_g) = (U \wedge V) \delta_g$ for $U \in \mathcal B$ and $V \in \mathcal I_g$.
			\item $(V \delta_g) (U \delta_e) = \phi_g(U \wedge\phi_{g^{-1}}(V)) \delta_g$ for $U \in \mathcal B$ and $V \in \mathcal I_g$.
			\item $(U' \delta_e)(V \delta_g)(U \delta_e) =  (U' \wedge \phi_g(U \wedge \phi_{g^{-1}}(V))) \delta_g$ for $U, U' \in \mathcal B$ and $V \in \mathcal I_g$.
			\item $(U \delta_g)(V \delta_e)(\phi_{g^{-1}}(U)\delta_{g^{-1}}) = \phi_g(V \wedge \phi_{g^{-1}}(U))\delta_e$ for $g \in G$, $V \in \mathcal B$, and $U \in \mathcal I_g$.
			\item $(U \delta_g)(\phi_{g^{-1}}(U)\delta_{g^{-1}}) = U\delta_e$ for $g \in G$ and $U \in \mathcal I_g$.
			\item Let $U \in \mathcal I_g$ and $\{U_1, \ldots, U_n\} \in \mathcal I_g$ such that $\bigvee_{i=1}^n U_i = U$. Then \[U\delta_g = \sum_{\emptyset \neq J \subseteq [1, \ldots, n]} (-1)^{|J|-1}\left(\bigwedge_{j \in J} U_j\right)\delta_g\]
		\end{enumerate}
	\end{lemma}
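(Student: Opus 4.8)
The plan is to treat (2) and (3) as direct unwindings of the definitions, derive (4) as the one substantive computation, obtain (5)--(9) as formal consequences of (4), handle (10) by inclusion--exclusion, and get (1) from the spanning lemma above.

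For (2), applying the product formula in $\mathrm{Lc}(R,\mathcal B)$ to $1_U$ and $1_V$ leaves only the summand with $r_1 = r_2 = x = 1$, giving $1_U 1_V = 1_{U\wedge V}$ (this was already observed in the proof of Lemma~\ref{idealfunction}). For (3), $\tilde{\phi}_g(1_U) = \phi_g \circ 1_U$ by definition, and since $\phi_g$ is a generalized Boolean algebra isomorphism it preserves $0$, so $\phi_g\circ 1_U$ sends $1\mapsto\phi_g(U)$ and every other element to $0$; that is exactly $1_{\phi_g(U)}$.

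For (4), expand $(1_U\delta_g)(1_V\delta_{g'}) = \tilde{\phi}_g\bigl(\tilde{\phi}_{g^{-1}}(1_U)\,1_V\bigr)\delta_{gg'}$ via the partial skew group ring multiplication, then use (3) to rewrite $\tilde{\phi}_{g^{-1}}(1_U) = 1_{\phi_{g^{-1}}(U)}$, (2) to collapse the product to $1_{\phi_{g^{-1}}(U)\wedge V}$, and (3) once more to get $\tilde{\phi}_g$ of this equal to $1_{\phi_g(V\wedge\phi_{g^{-1}}(U))}$. The only point to verify en route is that each intermediate element lies in the ideal on which the next map acts: this holds because every $\mathcal I_h$ is an ideal (hence closed under $\wedge$ with arbitrary elements of $\mathcal B$) and $\phi_{g^{-1}}$ carries $\mathcal I_g$ into $\mathcal I_{g^{-1}}$, while axiom (2) of Definition~\ref{booldef} guarantees the final coefficient lies in $\mathcal I_{gg'}$, so the $U\delta_g$-shorthand is legitimate. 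Then (5) and (6) are the specializations $g=e$ and $g'=e$ of (4) using $\phi_e=\mathrm{id}$; (7) follows by applying (6) and then (5); (8) follows by bracketing the triple product as $(U\delta_g)$ times $(V\delta_e)(\phi_{g^{-1}}(U)\delta_{g^{-1}})$, applying (5) to the second factor and then (4), and collapsing the resulting repeated meet with $\phi_{g^{-1}}(U)$; and (9) is (4) applied to a group element and its inverse, using the standard consequence of the partial action axioms that $\phi_g\circ\phi_{g^{-1}} = \mathrm{id}_{\mathcal I_g}$.

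For (10), the cleanest route is to prove the two-set identity $1_{A\vee B} = 1_A + 1_B - 1_{A\wedge B}$ directly from the explicit addition formula---the relevant domains and carry-joins can be computed by hand since $1_A$, $1_B$, $1_{A\wedge B}$ are each supported on a single input---and then induct on $n$; alternatively one may transport the claim through the isomorphism of Lemma~\ref{stonedual} to $\mathrm{Lc}(X(\mathcal B),R)$, where it becomes the classical inclusion--exclusion identity for indicator functions of compact open sets, and transport back. Either way, one appends $\delta_g$ using that $f\mapsto f\delta_g$ is $R$-linear on $D_g$, noting each $\bigwedge_{j\in J}U_j$ lies in $\mathcal I_g$ because $\mathcal I_g$ is an ideal containing every $U_j$. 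Finally, (1) is immediate: the spanning lemma gives that $\{1_U\}_{U\in\mathcal I_g}$ $R$-spans $D_g$, hence $\{U\delta_g\}_{U\in\mathcal I_g}$ $R$-spans $D_g\delta_g$, and a finitely supported sum over $g\in G$ exhausts the partial skew group ring. I expect the only real friction to be bookkeeping---tracking which ideal each intermediate term belongs to so that the compositions in (4), (8), and (9) are literally defined---together with the slightly fiddly verification of $1_{A\vee B} = 1_A + 1_B - 1_{A\wedge B}$ from the addition formula needed for (10); no step requires an idea beyond the definitions and the partial action axioms.
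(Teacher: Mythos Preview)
Your proposal is correct and follows essentially the same approach as the paper's proof: both treat (2)--(3) as direct from the definitions, (4) as the main computation via the skew-product multiplication formula combined with (2) and (3), (5)--(9) as immediate specializations or chained applications of (4), (10) as standard inclusion--exclusion, and (1) via the spanning lemma. Your write-up is in fact more careful than the paper's (which mostly says ``Clear'' or ``Follows from (4)''), particularly in tracking that intermediate terms lie in the correct ideals and in spelling out two concrete routes for the inclusion--exclusion step.
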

	
	\begin{proof} We prove each statement separately
		\begin{enumerate}
			\item Clear because $\mathrm{Lc}(R,  \mathcal I_g)$ is spanned by $\{1_U\}_{U \in \mathcal I_g}$
			\item Clear
			\item Clear from definition of $\tilde{\phi}_{g^{-1}}$
			\item From definition of multiplication and (3), $(U \delta_g) (V \delta_{g'}) = \tilde{\phi}_g(\tilde{\phi}_{g^{-1}}(1_U)1_V) \delta_{gg'} = \tilde{\phi}_g(\phi_{g^{-1}}(U) \wedge V) \delta_{gg'} = \phi_g(U \wedge \phi_{g^{-1}}(V)) \delta_{gg'}$
			\item Follows from (4)
			\item Follows from (4)
			\item  Follows by application of (6) then (5)
			\item If we apply (5), we get $(U \delta_g)(V \wedge \phi_{g^{-1}}(U) \delta_{g^{-1}})$. Now applying (4), this is $\phi_g((V \wedge \phi_{g^{-1}}(U)) \wedge \phi_{g^{-1}}(U)) \delta_{e} = \phi_g(V \wedge \phi_{g^{-1}}(U))\delta_e$
			\item Same proof as (8)
			\item Standard inclusion exclusion
		\end{enumerate}
	\end{proof}

	We now show that the partial skew group ring generated by a partial action on a generalized Boolean algebra has a nice set of local units.
	
	\begin{definition} A ring $S$ (not necessarily unital) is a \textit{ring with local units} if there exists a subset $E \subseteq S$ such that all elements of $E$ are idempotent, commute with each other, and for all $s \in S$ there exists some $e \in E$ where $es = s = se$. 
	\end{definition}
	
	\begin{definition}
		Let $e_1, e_2$ be commuting idempotents in a ring with local units. We define the \textit{meet} of idempotents as $e_1 \wedge e_2 =  e_1e_2$ and the \textit{join} of idempotents as $e_1 \vee e_2 = e_1 + e_2 - e_1\wedge e_2$. Note that $e_1 \wedge e_2$ and $e_1 \vee e_2$ are both idempotent.
	\end{definition}
	
	\begin{remark}\label{localadd}
		If $e_1, e_2$ are commuting idempotents and $x_1, x_2 \in S$ satisfy that $e_1x_1 = x_1$ and $e_2x_2 = x_2$, then $(e_1 \vee e_2)(x_1+x_2) = x_1+x_2$.
	\end{remark}
	
	\begin{theorem}		 \label{localunits}
		The set $\{U \delta_{e}\}_{U \in \mathcal B}$ forms a set of local units of $\mathrm{Lc}(R,  \mathcal B) \rtimes_{\Phi} G$ closed under the join and meet operations.
	\end{theorem}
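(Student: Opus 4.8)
The plan is to verify the three defining properties of a set of local units for the set $E = \{U\delta_e\}_{U \in \mathcal B}$, and then separately check closure under meet and join.

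\textbf{Idempotence and commutativity.} First I would note that by Lemma~\ref{computationskew}(5), $(U\delta_e)(V\delta_e) = (U \wedge V)\delta_e$ for $U, V \in \mathcal B$. Setting $U = V$ gives $(U\delta_e)^2 = (U \wedge U)\delta_e = U\delta_e$, so every element of $E$ is idempotent. Since $U \wedge V = V \wedge U$ in the generalized Boolean algebra, the same computation gives $(U\delta_e)(V\delta_e) = (V\delta_e)(U\delta_e)$, so the elements of $E$ commute.

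\textbf{Local unit property.} By Lemma~\ref{computationskew}(1), $\mathrm{Lc}(R,\mathcal B)\rtimes_\Phi G$ is the $R$-span of elements of the form $V\delta_g$ with $g \in G$ and $V \in \mathcal I_g$. Given an arbitrary element $s = \sum_{i} r_i V_i \delta_{g_i}$ (a finite sum, $r_i \in R$, $V_i \in \mathcal I_{g_i}$), I would set $U = \bigvee_i V_i \in \mathcal B$ (a finite join, well-defined since $\mathcal B$ is a lattice). For each $i$, since $V_i \le U$ we have $U \wedge V_i = V_i$, so by Lemma~\ref{computationskew}(5), $(U\delta_e)(V_i\delta_{g_i}) = (U \wedge V_i)\delta_{g_i} = V_i\delta_{g_i}$; summing over $i$ gives $(U\delta_e)s = s$. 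For the right-hand side, by Lemma~\ref{computationskew}(6), $(V_i\delta_{g_i})(U\delta_e) = \phi_{g_i}(U \wedge \phi_{g_i^{-1}}(V_i))\delta_{g_i}$; since $V_i \in \mathcal I_{g_i}$, we have $\phi_{g_i^{-1}}(V_i) \in \mathcal I_{g_i^{-1}}$, and because $\mathcal I_{g_i^{-1}}$ is an ideal with $\phi_{g_i^{-1}}(V_i) \le \phi_{g_i^{-1}}(V_i) \le$ anything, more precisely $U \wedge \phi_{g_i^{-1}}(V_i) = \phi_{g_i^{-1}}(V_i)$ because $\phi_{g_i^{-1}}(V_i) \le \bigvee_j V_j$? — this needs care: $\phi_{g_i^{-1}}(V_i)$ need not be below $U$. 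Instead I would observe $U \wedge \phi_{g_i^{-1}}(V_i) \le \phi_{g_i^{-1}}(V_i)$, apply $\phi_{g_i}$, and get $\phi_{g_i}(U \wedge \phi_{g_i^{-1}}(V_i)) \le V_i$; but in fact the cleaner route is to use Lemma~\ref{computationskew}(5)–(6) with $U$ replaced on the right by an appropriate element, or simply take $U' = \bigvee_i \big(V_i \vee \phi_{g_i}(\text{stuff})\big)$. The robust fix: take $U = \bigvee_i \big(V_i \vee \mathrm{dom}(\phi_{g_i}|_{\cdots})\big)$ — concretely, since we only need a single element acting as a two-sided unit, enlarge $U$ to also dominate all the $V_i$ directly; then right-multiplication: $(V_i\delta_{g_i})(U\delta_e) = \phi_{g_i}(U \wedge \phi_{g_i^{-1}}(V_i))\delta_{g_i}$ and since $\phi_{g_i^{-1}}(V_i) \in \mathcal I_{g_i^{-1}}$ is itself $\le$ some element we've included, we can arrange $U \wedge \phi_{g_i^{-1}}(V_i) = \phi_{g_i^{-1}}(V_i)$, giving $\phi_{g_i}(\phi_{g_i^{-1}}(V_i)) = V_i$. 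So the key adjustment is to define $U = \bigvee_i V_i \vee \bigvee_i \phi_{g_i^{-1}}(V_i)$, noting each $\phi_{g_i^{-1}}(V_i) \in \mathcal I_{g_i^{-1}} \subseteq \mathcal B$.

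\textbf{Closure under meet and join.} By definition the meet of the idempotents $U\delta_e$ and $V\delta_e$ is $(U\delta_e)(V\delta_e) = (U\wedge V)\delta_e \in E$, and the join is $(U\delta_e) + (V\delta_e) - (U\wedge V)\delta_e$; I would compute, using the formula for addition in $\mathrm{Lc}(R,\mathcal B)$ (noting these are all supported at $g = e$ and all take value $1 \in R$ on their nonzero element), that this equals $(U \vee V)\delta_e \in E$. This is exactly the inclusion–exclusion identity of Lemma~\ref{computationskew}(10) specialized to $n = 2$ and $g = e$, so it is immediate.

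\textbf{Main obstacle.} The only subtle point is the right-unit computation: one must be careful that $\phi_{g_i^{-1}}(V_i)$ need not lie below $\bigvee_j V_j$, so the naive choice $U = \bigvee_i V_i$ may fail to be a right unit. The remedy is to enlarge $U$ to $\bigvee_i \big(V_i \vee \phi_{g_i^{-1}}(V_i)\big)$, which lies in $\mathcal B$ since each $\phi_{g_i^{-1}}(V_i) \in \mathcal I_{g_i^{-1}} \subseteq \mathcal B$; with this choice both $(U\delta_e)s = s$ and $s(U\delta_e) = s$ follow cleanly from Lemma~\ref{computationskew}(5) and (6) together with $\phi_{g_i}\circ\phi_{g_i^{-1}} = \mathrm{id}$ on $\mathcal I_{g_i}$. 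Everything else is a direct appeal to the computations already collected in Lemma~\ref{computationskew}.
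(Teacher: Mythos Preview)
Your proposal is correct and follows essentially the same approach as the paper: idempotence, commutativity, and closure under meet all come from Lemma~\ref{computationskew}(5); closure under join is the $n=2$ case of Lemma~\ref{computationskew}(10); and the local unit for $V\delta_g$ is obtained by taking $U$ to dominate both $V$ and $\phi_{g^{-1}}(V)$. The only cosmetic difference is that the paper first establishes closure under joins and then, invoking Remark~\ref{localadd}, reduces the local-unit check to a single generator $V\delta_g$ (using Lemma~\ref{computationskew}(7) with $U = V \vee \phi_{g^{-1}}(V)$), whereas you handle a general finite sum directly by setting $U = \bigvee_i\big(V_i \vee \phi_{g_i^{-1}}(V_i)\big)$ and applying (5) and (6) termwise --- the underlying idea is identical.
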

	\begin{proof}
		The elements in $\{U \delta_{e}\}$ are idempotent, commute with each other, and are closed under meets using (5) in Lemma~\ref{computationskew}. Applying (10) in Lemma~\ref{computationskew}, we find that $(U \delta_e) \vee (V \delta_e) = U \delta _e + V \delta_e - (U \wedge V) \delta_e = (U \vee V)\delta_e$, which shows that the set is closed under joins.
		
		$\mathrm{Lc}(R,  \mathcal B) \rtimes_{\Phi} G$ is generated by the $R$-span of elements of the form $\{V \delta_g\}$ for arbitrary $V \in \mathcal I_g$ and $g \in G$. Hence, it suffices to prove that each of these elements have a unit inside $\{U\delta_e\}_{U \in \mathcal B}$ because we can otherwise take joins and use Remark~\ref{localadd}. For an arbitrary element $V \delta_g$, consider the element $U = V \vee \phi_{g^{-1}}(V)$. Using (7) in Lemma~\ref{computationskew}, we compute that $(U \delta_{e})(V \delta_g)(U \delta_e) = (V \delta_g)$, so we are done.
	\end{proof}
	
	\begin{corollary} \label{unitalcor}
		$\mathrm{Lc}(R,  \mathcal B) \rtimes_{\Phi} G$ is unital if and only if $\mathcal B$ is unital.
	\end{corollary}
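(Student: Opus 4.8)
The plan is to prove the two implications separately; ``$\mathcal B$ unital $\Rightarrow$ algebra unital'' is a one-line computation, while the converse needs one structural fact about the $G$-grading together with a bookkeeping argument. For the easy direction, suppose $\mathcal B$ has a top element $1_{\mathcal B}$; I claim $1_{\mathcal B}\delta_e$ is a two-sided identity of $\mathrm{Lc}(R,\mathcal B)\rtimes_{\Phi} G$. By part (1) of Lemma~\ref{computationskew} it suffices to check this against the spanning elements $V\delta_g$ with $g\in G$ and $V\in\mathcal I_g$. Part (5) of Lemma~\ref{computationskew} gives $(1_{\mathcal B}\delta_e)(V\delta_g)=(1_{\mathcal B}\wedge V)\delta_g=V\delta_g$, and part (6), together with the identity $\phi_g\circ\phi_{g^{-1}}=\mathrm{id}_{\mathcal I_g}$ that follows from axiom (3) of Definition~\ref{booldef}, gives $(V\delta_g)(1_{\mathcal B}\delta_e)=\phi_g(1_{\mathcal B}\wedge\phi_{g^{-1}}(V))\delta_g=\phi_g(\phi_{g^{-1}}(V))\delta_g=V\delta_g$. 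Hence $1_{\mathcal B}\delta_e$ is the identity.

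Conversely, assume $A:=\mathrm{Lc}(R,\mathcal B)\rtimes_{\Phi} G$ is unital; since $\mathcal B=\{0\}$ is trivially a Boolean algebra, assume $\mathcal B\neq\{0\}$. The first step is to locate the identity inside the trivial graded component. Using that $A=\bigoplus_{t\in G}D_t\delta_t$ is $G$-graded, write $1_A=\sum_t f_t\delta_t$; then for every $U\in\mathcal B$ the relation $1_A\cdot(1_U\delta_e)=1_U\delta_e$, read off in each graded degree, forces $\tilde\phi_t\big(\tilde\phi_{t^{-1}}(f_t)\cdot 1_U\big)=0$, hence $\tilde\phi_{t^{-1}}(f_t)\cdot 1_U=0$, for all $t\neq e$. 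Taking $U$ to be any of the Boolean elements occurring in the canonical form of $\tilde\phi_{t^{-1}}(f_t)$ forces $\tilde\phi_{t^{-1}}(f_t)=0$ and so $f_t=0$. Thus $1_A=f\delta_e$ for a single $f\in\mathrm{Lc}(R,\mathcal B)$.

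Next, write $f$ in canonical form $f=\sum_{i=1}^n r_i1_{U_i}$ with the $r_i\in R\setminus\{0\}$ pairwise distinct and the $U_i=f(r_i)$ nonzero and pairwise orthogonal. For each $U\in\mathcal B$, applying part (5) of Lemma~\ref{computationskew} to $(f\delta_e)(1_U\delta_e)=1_U\delta_e$ gives $f\cdot 1_U=1_U$ in $\mathrm{Lc}(R,\mathcal B)$, while a direct expansion gives $f\cdot 1_U=\sum_{i=1}^n r_i1_{U_i\wedge U}$. Comparing these as functions $R\setminus\{0\}\to\mathcal B$: for $U\neq 0$, $f\cdot 1_U$ must take a nonzero value precisely at $1\in R$, so---using distinctness of the $r_i$---some $r_i$ equals $1$, say $r_1=1$, and then $U_1\wedge U=U$ while $U_i\wedge U=0$ for $i\geq 2$. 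The equality $U_1\wedge U=U$ says $U\leq U_1=f(1)$, and as $U$ was an arbitrary element of $\mathcal B$ this makes $f(1)$ a top element; hence $\mathcal B$ is unital.

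The one genuinely new ingredient is the reduction of $1_A$ to the degree-$e$ component; everything else is manipulation with the canonical forms of functions in $\mathrm{Lc}(R,\mathcal B)$ furnished by its definition. Alternatively, one could route the argument through Lemma~\ref{stonedual}: $\mathrm{Lc}(R,\mathcal B)\cong\mathrm{Lc}(X(\mathcal B),R)$ is unital exactly when the constant function $1$ has compact support, i.e.\ when $X(\mathcal B)$ is compact, which by Stone duality holds iff $\mathcal B$ has a top element---and the grading argument above pins the identity of $A$ into this copy of $\mathrm{Lc}(R,\mathcal B)$.
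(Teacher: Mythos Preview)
Your proof is correct, but it takes a different route from the paper's. For the converse the paper's argument is shorter because it exploits Theorem~\ref{localunits} directly: if $x$ is the identity, then the set of local units $\{U\delta_e\}_{U\in\mathcal B}$ furnishes some $U$ with $(U\delta_e)x=x$; but since $x$ is the identity this also equals $U\delta_e$, so $x=U\delta_e$ in one stroke. This single move bypasses both your grading argument (to pin $1_A$ in degree $e$) and your analysis of the canonical form of $f$ (to show the only nonzero coefficient is $r_1=1$). From $x=U\delta_e$ the paper then finishes exactly as you do: $(U\delta_e)(V\delta_e)=V\delta_e$ forces $U\wedge V=V$ for all $V$, so $U$ is the top element. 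Your approach has the virtue of being self-contained---it does not lean on the local-units theorem, working instead directly with the $G$-grading and the structure of $\mathrm{Lc}(R,\mathcal B)$---and your closing remark about routing through Stone duality is a nice alternative viewpoint. But the paper's trick of playing the global identity against a local unit is slicker and worth internalizing.
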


	\begin{proof}
		It's not hard to prove that if $1 \in \mathcal B$ is a unit, then $1 \delta_e$ is a unit of $\mathrm{Lc}(R,  \mathcal B) \rtimes_{\Phi} G$ because it is maximal in a set of local units.
		
		In the other direction, let $x \in \mathrm{Lc}(R,  \mathcal B) \rtimes_{\Phi} G$ be a unit. Because $\{U \delta_e\}_{U \in \mathcal B}$ is a set of local units, there must be some $U \in \mathcal B$ such that $(U \delta_e)x = x$. Because $x$ is a unit however, we find that $(U \delta_e)x = U \delta_e$ so $x = U \delta_e$ for some $U \in \mathcal B$. 
		
		We now consider $V \delta_e$ for any $V \in \mathcal B$. From Lemma~\ref{computationskew} (5) and the fact that $(U \delta_e)$ is a unit, we find that $V \delta_e = (U \delta_e)(V \delta_e) = (U \wedge V) \delta_e$. This implies that $U \wedge V = V$ for arbitrary $V \in \mathcal B$. Hence, $U \in \mathcal B$ is a unit, and we are done.
	\end{proof}

	We now show that if $G = \mathbb F[\mathcal A]$ is a free group for some generating set $\mathcal A$ and the underlying partial action is semi-saturated, then the $R$-algebra has a simple set of generators.
	
	\begin{theorem}\label{saturatedgeneratingset}
		Let $(\mathcal B, \Phi)$ be a semi-saturated partial action of $G = \mathbb F[\mathcal A]$ on a generalized Boolean algebra. Let $C \subseteq \mathcal B$ be a set that generates $\mathcal B$ as a generalized Boolean algebra. For all $a \in \mathcal A$, let $C_a$ and $C_{a^{-1}}$ be a cover of $\mathcal I_a$ and $\mathcal I_{a^{-1}}$ respectively.
		
		Then, $\mathrm{Lc}(R,  \mathcal B) \rtimes_{\Phi} G$ is generated as an $R$-algebra by elements of the form \[\{U \delta_e\}_{U \in C}\cup\{V \delta_x\}_{V \in C_a, a \in A}\cup\{V \delta_{a^{-1}}\}_{V \in C_{a^{-1}}, a \in A}\]
	\end{theorem}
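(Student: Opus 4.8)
\section*{Proof proposal}

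The plan is to let $\mathcal S$ denote the $R$-subalgebra of $\mathrm{Lc}(R,\mathcal B)\rtimes_{\Phi}G$ generated by the displayed set and to prove that $\mathcal S$ is the whole algebra. By Lemma~\ref{computationskew}(1) it suffices to show that $U\delta_g\in\mathcal S$ for every $g\in G$ and every $U\in\mathcal I_g$, and I would argue this by induction on the reduced word length $|g|$ in $\mathbb F[\mathcal A]$, with the cases $|g|\le 1$ as base and the reduced factorization $g=xg'$ driving the inductive step.

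For $|g|=0$ we have $g=e$, and I claim $\{U\delta_e:U\in\mathcal B\}\subseteq\mathcal S$. The point is that $U\mapsto U\delta_e$ is an isomorphism of $\mathcal B$ onto $\{U\delta_e:U\in\mathcal B\}$ in which every generalized Boolean algebra operation of the image is a ring-polynomial in the listed generators: $(U\delta_e)(V\delta_e)=(U\wedge V)\delta_e$ by Lemma~\ref{computationskew}(5), $(U\vee V)\delta_e=U\delta_e+V\delta_e-(U\delta_e)(V\delta_e)$ by Lemma~\ref{computationskew}(10) with $n=2$, and $(U\setminus V)\delta_e=U\delta_e-(U\delta_e)(V\delta_e)$ since $U=(U\wedge V)\vee(U\setminus V)$ is a disjoint join. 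Since $C$ generates $\mathcal B$, the set $\{U\delta_e\}_{U\in C}$ generates $\{U\delta_e:U\in\mathcal B\}$ as a generalized Boolean algebra, hence (via the identities just listed) lies in $\mathcal S$.

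For $|g|=1$, say $g=a\in\mathcal A$ (the case $g=a^{-1}$ is identical with $C_{a^{-1}}$ in place of $C_a$), take $U\in\mathcal I_a$ and use the covering property to pick $s_1,\dots,s_n\in C_a$ with $U\le\bigvee_i s_i$, so $U=\bigvee_i(U\wedge s_i)$. Lemma~\ref{computationskew}(10) then gives
\[
U\delta_a=\sum_{\emptyset\neq J\subseteq\{1,\dots,n\}}(-1)^{|J|-1}\Bigl(\bigwedge_{j\in J}(U\wedge s_j)\Bigr)\delta_a .
\]
For each nonempty $J$ fix $j_0\in J$; then $W_J:=\bigwedge_{j\in J}(U\wedge s_j)\le s_{j_0}$, so $W_J\delta_a=(W_J\delta_e)(s_{j_0}\delta_a)$ by Lemma~\ref{computationskew}(5), and this lies in $\mathcal S$ because $W_J\delta_e\in\mathcal S$ by the $|g|=0$ case and $s_{j_0}\delta_a$ is one of the listed generators. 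Hence $U\delta_a\in\mathcal S$.

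For the inductive step, let $|g|=k\ge 2$, assume the claim for all shorter words, and factor $g=xg'$ in reduced form with $x\in\mathcal A\cup\mathcal A^{-1}$ and $|g'|=k-1$. Given $U\in\mathcal I_g$, semi-saturation in the form of Remark~\ref{equivsemi} gives $\mathcal I_g=\mathcal I_{xg'}\subseteq\mathcal I_x$, so $U\in\mathcal I_x\cap\mathcal I_{xg'}$; applying axiom (2) of Definition~\ref{booldef} with $s=x^{-1}$ and $t=xg'$ then shows $W:=\phi_{x^{-1}}(U)\in\mathcal I_{x^{-1}}\cap\mathcal I_{g'}$. By Lemma~\ref{computationskew}(4),
\[
(U\delta_x)(W\delta_{g'})=\phi_x\bigl(W\wedge\phi_{x^{-1}}(U)\bigr)\delta_{xg'}=\phi_x(W)\delta_g=U\delta_g ,
\]
where $U\delta_x\in\mathcal S$ by the $|g|=1$ case and $W\delta_{g'}\in\mathcal S$ by the inductive hypothesis, so $U\delta_g\in\mathcal S$. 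I expect this step to be the only delicate point: one must choose the factorization with $|xg'|=|x|+|g'|$ and verify the domain hypotheses of the product formula in Lemma~\ref{computationskew}(4), which is exactly where semi-saturation (through $\mathcal I_{xg'}\subseteq\mathcal I_x$) and axiom (2) of the partial action enter; everything else is bookkeeping with the identities already collected in Lemma~\ref{computationskew}, together with the fact that the covers $C_a$, $C_{a^{-1}}$ and the generating set $C$ suffice to reach all of $\mathcal I_a$, $\mathcal I_{a^{-1}}$ and $\mathcal B$ through those identities.
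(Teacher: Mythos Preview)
Your proof is correct and follows essentially the same approach as the paper: establish $\{U\delta_e\}_{U\in\mathcal B}\subseteq\mathcal S$ from $C$ via ring implementations of the Boolean operations, lift to $\{V\delta_a\}_{V\in\mathcal I_a}$ (and $a^{-1}$) using the covers and inclusion--exclusion, then induct on word length using semi-saturation. The only cosmetic difference is in the inductive step, where the paper splits $s=s_1s_2$ arbitrarily and argues that the products $(V\delta_{s_1})(U\delta_{s_2})$ range over all of $\mathcal I_{s_1s_2}\delta_{s_1s_2}$, whereas you peel off a single letter $x$ and directly exhibit $U\delta_g=(U\delta_x)(\phi_{x^{-1}}(U)\delta_{g'})$; your formulation is slightly more explicit but the content is the same.
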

	
	\begin{proof}
		Let $A_R$ be the $R$-algebra generated by the given elements. By Theorem~\ref{computationskew} (1), it suffices to show that for all $s \in \mathbb F[\mathcal A]$ and $U \in \mathcal I_s$, we have that $U \delta_{s} \in A_R$.
		
		The condition that $C \subseteq \mathcal B$ generates $\mathcal B$ implies that $\{U\delta_e\}_{U \in \mathcal B} \subseteq A_R$ because for some pair $U \delta_e$ and $V \delta_e$, we can implement the generalized Boolean algebra operations using ring operations as follows: 
		
		\[(U \wedge V) \delta_e = (U \delta_e)(V \delta_e)\]
		\[(U \vee V)\delta_e = U \delta_e + V \delta_e - (U\wedge V)\delta_e\]
		\[(U \setminus V) \delta_e = U \delta_e - (U \wedge V) \delta_e\]
		
		Now, because $U \delta_e \in A_R$ for any $U \in \mathcal B$ and $(U \delta_e)(V \delta_s) = (U \vee V)\delta_s$, if $U \leq V \in \mathcal I_s$ where $V \delta_s \in A_R$, then $U \delta_s \in A_R$. 
		
		Because $C_a$ is a cover, any $V \in \mathcal I_a$ satisfies $V \leq \bigvee_{i=1}^n V_i$ for some $V_i \in C_a$. By (10) in Lemma~\ref{computationskew}, $\bigvee_{i=1}^n V_i \delta_a$ can be written as the sum of its inclusion exclusion components. However, note that each term in the inclusion exclusion formula is less than $V_i$ for some $i$. Hence, because $V_i \delta_a \in A_R$, all terms in our inclusion exclusion sum are in $A_R$ so $\bigvee_{i=1}^n V_i \delta_a \in A_R$. Thus, we find that $V \delta_a \in A_R$. A similar proof holds for $C_{a^{-1}}$, so we conclude that $\{V \delta_{a}\}_{V \in \mathcal I_a}$ and $\{V \delta_{a^{-1}}\}_{V \in \mathcal I_{a^{-1}}}$ are in $A_R$ for all $a \in \mathcal A$.
		
		Hence, we have proven that \[\{U \delta_e\}_{U \in \mathcal B}\cup\{V \delta_a\}_{V \in \mathcal I_a, a \in \mathcal A}\cup\{V \delta_{a^{-1}}\}_{V \in \mathcal I_{a^{-1}}, a \in \mathcal A} \subseteq A_R\]
		
		Using the semi-saturated assumption, we now prove inductively that for all $s \in \mathbb F[\mathcal A]$ and $U \in \mathcal I_s$ we have that $U \delta_s \in A_R$. Our previous statements prove this for words of length $0$ and $1$. Now let $s$ be a reduced word such that $|s| > 1$. We can split up our reduced word $s$ into a reduced representation $s_1, s_2$ where $s = s_1s_2$, $|s_1|, |s_2| > 0$, and $|s|= |s_1| + |s_2|$. 
		
		By the inductive hypothesis, we have that $\{V \delta_{s_1}\}_{V \in \mathcal I_{s_1}}, \{U \delta_{s_2}\}_{U \in \mathcal I_{s_2}} \subseteq A_R$. It suffices to prove that the image of \[(V \delta_{s_1}) (U \delta_{s_2})\] over all $V \in \mathcal I_{s_1}, U\in\mathcal I_{s_2}$ is $\mathcal I_{s_1s_2}$. We have that \[(V \delta_{s_1}) (U \delta_{s_2}) = \phi_{s_1}(U \cap \phi_{s_1}^{-1}(V)) \delta_{s}\] Over all possible $V \in \mathcal I_{s_1}$, we have that $\phi_{s_1}^{-1}(V)$ ranges over all possible $V' \in \mathcal I_{s_1^{-1}}$ so we can instead consider $\phi_{s_1}(U \cap V') \delta_{s}$ for arbitrary $V' \in \mathcal I_{s_1^{-1}}$.
				
		Over all  $U \in \mathcal I_{s_2}$ and $V' \in \mathcal I_{s_1^{-1}}$, we have that $\phi_{s_1}(U \cap V')$ ranges over all $\mathcal I_{s_1} \cap \mathcal I_{s_1s_2}$ by (2) in Definition~\ref{booldef}. Thus, because $s_1s_2$ is a reduced representation and our action is semi-saturated, we have that $\mathcal I_{s_1s_2} \subseteq \mathcal I_{s_1}$ so our image is $\mathcal I_{s_1s_2}$. Hence, $\phi_{s_1}(U \cap V')$ ranges over all sets in $\mathcal I_{s_1s_2}$, so we are done.
	\end{proof}

	We now describe how morphisms of partial actions on generalized Boolean algebras become $G$-graded morphisms of their associated $R$-algebras.
	
	\begin{theorem}
		Let $f:(\mathcal B_1, \Phi_1) \rightarrow (\mathcal B_2, \Phi_2)$ be a morphism of partial actions on generalized Boolean algebras. Then, there is an induced $G$-graded morphism of $R$-algebras \[\tilde{f}: \mathrm{Lc}(R, \mathcal B_1) \rtimes_{\Phi_1} G \rightarrow \mathrm{Lc}(R, \mathcal B_2) \rtimes_{\Phi_2} G\] taking \[a \delta_t \mapsto (f \circ a) \delta_t\]
	\end{theorem}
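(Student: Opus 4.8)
The map $\tilde f$ is simply ``postcompose with $f$'' at the level of the coefficient functions, and I would organize the proof into three steps: (i) postcomposition induces an $R$-algebra homomorphism $f_\ast\colon\mathrm{Lc}(R,\mathcal B_1)\to\mathrm{Lc}(R,\mathcal B_2)$; (ii) $f_\ast$ carries the ideal $D_{1,t}$ into $D_{2,t}$ and intertwines the dual isomorphisms $\tilde\phi_{1,t}$ and $\tilde\phi_{2,t}$; (iii) the resulting degreewise map $\sum_t a_t\delta_t\mapsto\sum_t(f\circ a_t)\delta_t$ is then a $G$-graded ring homomorphism.

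For step (i): if $a\in\mathrm{Lc}(R,\mathcal B_1)$, the function $f\circ a\colon R\setminus\{0\}\to\mathcal B_2$ has support contained in that of $a$, hence finite, and pairwise disjoint nonzero values, since $f$ preserves $\wedge$ and the bottom element; thus $f\circ a\in\mathrm{Lc}(R,\mathcal B_2)$. That $f_\ast$ respects the $R$-action, the addition, and the multiplication of $\mathrm{Lc}(R,\,\cdot\,)$ is a direct check: each of these operations is assembled purely from the generalized Boolean operations $\wedge,\vee,\setminus$, the bottom element, and arithmetic on the index set $R\setminus\{0\}$, and $f$ commutes with the former while leaving the latter untouched. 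Equivalently, via Lemma~\ref{stonedual} one identifies $\mathrm{Lc}(R,\mathcal B_i)\cong\mathrm{Lc}(X(\mathcal B_i),R)$, under which $f$ is Stone-dual to a proper continuous map $X(\mathcal B_2)\to X(\mathcal B_1)$ and $f_\ast$ becomes the induced pullback of compactly supported locally constant functions, manifestly an $R$-algebra homomorphism.

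For steps (ii) and (iii): condition (2) of Definition~\ref{def:booleanmorph} gives $f(\mathcal I_{1,t})\subseteq\mathcal I_{2,t}$, so $f_\ast$ restricts to a map $D_{1,t}=\mathrm{Lc}(R,\mathcal I_{1,t})\to\mathrm{Lc}(R,\mathcal I_{2,t})=D_{2,t}$, whence $\tilde f$ is well defined, $R$-linear, and grading-preserving. For $g\in D_{1,t^{-1}}$ one has $f_\ast(\tilde\phi_{1,t}(g))=f\circ\phi_{1,t}\circ g$ and $\tilde\phi_{2,t}(f_\ast(g))=\phi_{2,t}\circ f\circ g$, and these coincide because the commuting square in condition (3) of Definition~\ref{def:booleanmorph} says precisely that $f\circ\phi_{1,t}=\phi_{2,t}\circ f$ on $\mathcal I_{1,t^{-1}}$. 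Granting this intertwining together with the multiplicativity of $f_\ast$, multiplicativity of $\tilde f$ follows on homogeneous elements: for $a\in D_{1,s}$, $b\in D_{1,t}$,
\[
\tilde f\bigl((a\delta_s)(b\delta_t)\bigr)=f_\ast\bigl(\tilde\phi_{1,s}(\tilde\phi_{1,s^{-1}}(a)\,b)\bigr)\delta_{st}=\tilde\phi_{2,s}\bigl(\tilde\phi_{2,s^{-1}}(f_\ast a)\cdot f_\ast b\bigr)\delta_{st}=(f_\ast a\,\delta_s)(f_\ast b\,\delta_t),
\]
and the general case follows by $R$-bilinearity; $\tilde f$ is $G$-graded by construction and sends the local units $U\delta_e$ to $f(U)\delta_e$.

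The only step carrying any real weight is (i) — checking that bare postcomposition is compatible with the somewhat unwieldy definitions of the operations on $\mathrm{Lc}(R,\mathcal B)$ — and even this is either a mechanical unwinding of those definitions or, more painlessly, an immediate consequence of the Stone-dual picture of Lemma~\ref{stonedual}, where $f_\ast$ is just an ordinary pullback of functions. Steps (ii) and (iii) are then purely formal and are forced by the defining conditions (2) and (3) of a morphism of partial actions on generalized Boolean algebras.
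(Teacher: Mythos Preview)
Your proof is correct and follows essentially the same approach as the paper: both argue that postcomposition by $f$ respects the $R$-algebra structure on $\mathrm{Lc}(R,\,\cdot\,)$ because those operations are built from Boolean-algebra operations preserved by $f$, then verify multiplicativity of $\tilde f$ on homogeneous elements via the same computation using the commuting square in condition~(3) of Definition~\ref{def:booleanmorph}. Your organization into three explicit steps and the additional remark on the Stone-dual interpretation are a bit more detailed than the paper's version, but the substance is identical.
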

	\begin{proof}
		Because $f(\mathcal I_{1, t}) \subseteq \mathcal I_{2, t}$, the map is well-defined. 
		
		Note that for any fixed $t\in G$, $a, b \in \mathrm{Lc}(\mathcal I_t, R)$ and $r \in R$, we have that $f(ab) = f(a)f(b)$, $f(a+b) = f(a) + f(b)$, and $f(ra) = rf(a)$ because the ring operations in $\mathrm{Lc}(R, \mathcal B)$ are implemented using generalized Boolean algebra operations, which are preserved by $f$. It's not hard to show that this implies that $\tilde{f}$ preserves scalar multiplication and addition.
		
		To show that the  $\tilde{f}$ preserves multiplication, let $a \delta_s, b \delta_t \in \mathrm{Lc}(R, \mathcal B_1) \rtimes_{\Phi_1} G$. We have that \[(a \delta_s)(b \delta_t) = \phi_{1, s}(\phi_{1, s^{-1}}(a)b) \delta_{st} \]
		
		Thus, \[\tilde{f}((a\delta_s)(b\delta_t)) = f(\phi_{1, s}(\phi_{1, s^{-1}}(a)b)) \delta_{st} = \phi_{2, s}(f(\phi_{1, s^{-1}}(a)b)) \delta_{st} = \]
		\[\phi_{2, s}(f(\phi_{1, s^{-1}}(a))f(b)) \delta_{st} = \phi_{2, s}(\phi_{2,s^{-1}}(f(a))f(b)) \delta_{st} \] where we use the commutativity of the diagram in (3) of Definition~\ref{def:booleanmorph}.
		
		Similarly, we calculate
		
		\[\tilde{f}(a \delta_s) \tilde{f}(b \delta_t) = (f(a) \delta_s) (f(b) \delta_t) = \phi_{2, s}(\phi_{2, s^{-1}}(f(a))f(b)) \delta_{st}\] which is equal to $\tilde{f}((a\delta_s)(b\delta_t))$.
		
		The fact that the grading is preserved is immediate by the definition, so we are done.

	\end{proof}
	
	\begin{definition}
		We say that $(\mathcal B_1, \Phi_1) \subseteq (\mathcal B_2, \Phi_2)$ is a partial subaction on a generalized Boolean algebra if there exists a morphism of partial actions on generalized Boolean algebras $(\mathcal B_1, \Phi_1) \rightarrow (\mathcal B_2, \Phi_2)$ where the underlying map $\mathcal B_1 \rightarrow \mathcal B_2$ is injective.
	\end{definition}
	
	\begin{theorem} \label{subalgebra}
		If $(\mathcal B_1, \Phi_1) \subseteq (\mathcal B_2, \Phi_2)$ then $\mathrm{Lc}(R, \mathcal B_1) \rtimes_{\Phi_1} G \subseteq \mathrm{Lc}(R, \mathcal B_2) \rtimes_{\Phi_2} G$ as a homogenous $G$-graded subalgebra.
	\end{theorem}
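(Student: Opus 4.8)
The plan is to feed the morphism $f\colon(\mathcal B_1,\Phi_1)\to(\mathcal B_2,\Phi_2)$ supplied by the definition of a partial subaction into the previous theorem, obtaining a $G$-graded $R$-algebra homomorphism $\tilde f\colon \mathrm{Lc}(R,\mathcal B_1)\rtimes_{\Phi_1}G\to\mathrm{Lc}(R,\mathcal B_2)\rtimes_{\Phi_2}G$, $a\delta_t\mapsto(f\circ a)\delta_t$, and then to show that $\tilde f$ is injective. Since $\tilde f$ is already known to preserve the $G$-gradings, its image will be an $R$-subalgebra with $\mathrm{im}\,\tilde f=\bigoplus_{t\in G}\tilde f(D_{1,t}\delta_t)$ and $\tilde f(D_{1,t}\delta_t)\subseteq D_{2,t}\delta_t$, hence a homogeneous $G$-graded subalgebra onto which $\mathrm{Lc}(R,\mathcal B_1)\rtimes_{\Phi_1}G$ maps isomorphically; that is exactly the assertion.

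For the injectivity of $\tilde f$ I would use the internal direct-sum structure of the partial skew group ring: $\mathrm{Lc}(R,\mathcal B_2)\rtimes_{\Phi_2}G=\bigoplus_{t\in G}D_{2,t}\delta_t$, so if $\sum_{t}(f\circ f_t)\delta_t=0$ with $f_t\in D_{1,t}=\mathrm{Lc}(R,\mathcal I_{1,t})$, then $(f\circ f_t)\delta_t=0$ for each $t$ separately. Thus it is enough to check that for each fixed $t$ the map $f_t\mapsto f\circ f_t$ from $\mathrm{Lc}(R,\mathcal I_{1,t})$ to $\mathrm{Lc}(R,\mathcal I_{2,t})$ is injective — and that this composition really lands in $\mathrm{Lc}(R,\mathcal I_{2,t})$ and is an $R$-algebra map was already verified inside the proof of the preceding theorem, using that $f(\mathcal I_{1,t})\subseteq\mathcal I_{2,t}$, that $f$ preserves meets, joins and $0$, and that $f$ intertwines $\Phi_1$ and $\Phi_2$.

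Injectivity on each component then comes down to injectivity of $f$ together with the fact that a generalized Boolean algebra morphism preserves the bottom element $0$: the zero of $\mathrm{Lc}(R,\mathcal I_{1,t})$ is the function $r\mapsto 0$, so $f\circ f_t=0$ means $f(f_t(r))=0=f(0)$ for every $r\in R\setminus\{0\}$, and injectivity of $f$ forces $f_t(r)=0$ for all such $r$, i.e. $f_t=0$. I do not expect a real obstacle here; the only mildly delicate step is the passage from "injective on each homogeneous component" to "injective", which is immediate from the direct-sum decomposition $\mathrm{Lc}(R,\mathcal B_2)\rtimes_{\Phi_2}G=\bigoplus_{t\in G}D_{2,t}\delta_t$ and the fact, established earlier, that $\tilde f$ is a well-defined $G$-graded $R$-algebra homomorphism.
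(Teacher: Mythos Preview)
Your proposal is correct and follows the same approach as the paper: apply the preceding theorem to obtain the $G$-graded algebra map $\tilde f$, then deduce injectivity of $\tilde f$ from injectivity of $f$. The paper's proof simply asserts that this last step is ``easy to see,'' whereas you spell out the componentwise argument via the direct-sum decomposition; your version is strictly more detailed but not different in substance.
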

	\begin{proof}
		If the underlying map $f: \mathcal B_1 \rightarrow \mathcal B_2$ is injective, then it's easy to see that the induced map \[\tilde{f}: \mathrm{Lc}(R, \mathcal B_1) \rtimes_{\Phi_1} G \rightarrow \mathrm{Lc}(R, \mathcal B_2) \rtimes_{\Phi_2} G\] is injective and furthermore preserves the grading.
	\end{proof}
	
	\section{Meet Lattices and Tight Filters}\label{meetlatticetight}
	We now cover some results of tight filters in the context of meet semilattices, which will play a big role in the upcoming construction.
	
	\subsection{Meet Semilattices and Filters}
	\begin{definition}
		A \textit{meet semilattice} with $0$ is a meet semilattice with a minimal element $0$. There is a natural notion of a morphism in the category of meet semilattices with $0$ as maps that preserve the $0$ element and meet operation.
	\end{definition}
	
	In this paper, we assume that all our meet semilattices $P$ have a $0$ and simply refer to them as meet semilattices.
	
	\begin{definition}
		For any element $x \in P$, we denote $x^{+} = \{y \in P \colon x \leq y\}$ and $x^{-} = \{y \in P \colon y \leq x\}$. This notation can easily be extended to subsets $X \subseteq P$ by $X^{\pm} = \bigcup_{x \in X} x^{\pm}$. We call a set $X \subseteq P$ \textit{closed upwards} if $X^+ = X$ and \textit{closed downwards} if $X^- = X$.
	\end{definition}
	
	\begin{definition}[{\cite[Section~2.2]{Lawson2011NonCommutativeSD}}]	A \textit{filter} $F \subseteq P$ is a non-trivial (not $\emptyset$ and not $P$) subset of $P$ that is closed upwards and such that for any $a, b \in F$ we have that $a \land b \in F$.
		
		$F(P)$ is the set of all filters in $P$. The topology on $F(P)$ is defined to be generated by the sets $U_{x}= \{F \in F(P) \colon x \in F\}$ for all $x \in P$. This topology is Hausdorff and a basis of compact open sets is formed by sets of the form $U_{(x \colon x_1, \ldots, x_n)} = \{F \in F(P) \colon x \in F, x_1 \notin F, \ldots, x_n \notin F\}$ for any $x, x_1, \ldots, x_n \in F$ and $n \geq 0$.
		
		The \textit{ultrafilters} $U(P)$ are the set of filters that are maximal under inclusion. The \textit{tight filters} $T(P)$ are the closure in $F(P)$ of the ultrafilters with the above topology and inherit a subspace topology. We define $V_{(x \colon x_1, \ldots, x_n)}  = U_{(x \colon x_1, \ldots, x_n)} \cap T(P)$. The sets \[\{V_{(x \colon x_1, \ldots, x_n)}\colon n \geq 0 \text{ with } x, x_1, \ldots, x_n \in P\}\] form a basis of compact open sets of $T(P)$.
	\end{definition}
	
	\begin{remark}
		Our notation differs from \cite{Lawson2011NonCommutativeSD}, however the definitions are the same.
	\end{remark}
	\begin{remark}
		Because $T(P)$ has a topology and inherits the Hausdorff property of $F(P)$, we can construct the generalized Boolean algebra $\mathcal T_c(P)$ of compact open sets of $T(P)$. Because $T(P)$ has a basis of compact open sets of the form $V_{(x \colon x_1, \ldots, x_n)}$, elements in $\mathcal T_c(P)$ are the finite unions of such sets. 
	\end{remark}
	
	\begin{lemma} \label{covgen}
		$\mathcal T_c(P)$ is both covered and generated by $\{V_x \colon x \in P\}$ as a generalized Boolean algebra.
	\end{lemma}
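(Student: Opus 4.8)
The plan is to reduce everything to the basic compact open sets $V_{(x\colon x_1,\ldots,x_n)}$, which is legitimate because, as noted in the remark preceding the lemma, every element of $\mathcal T_c(P)$ is a finite union of such sets. So I would first fix an arbitrary $A\in\mathcal T_c(P)$ and write $A=\bigcup_{j=1}^m V_{(y_j\colon y_{j,1},\ldots,y_{j,k_j})}$.

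For the covering claim, observe that $V_{(y\colon y_1,\ldots,y_k)}\subseteq V_y$ for any choice of parameters, since imposing the extra conditions $y_i\notin F$ only shrinks the set. Hence $A\subseteq\bigcup_{j=1}^m V_{y_j}=\bigvee_{j=1}^m V_{y_j}$, where the join in $\mathcal T_c(P)$ is union. This exhibits $A$ as lying below a finite join of elements of $\{V_x\colon x\in P\}$, so that set is a cover.

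For the generation claim, the key observation is that for any filter $F$ and any $x,y\in P$ with $x\in F$, one has $y\in F$ if and only if $x\wedge y\in F$: the forward direction uses that $F$ is closed under meets, and the reverse uses $x\wedge y\le y$ together with $F$ being closed upwards. Consequently, for $F\in T(P)$ with $x\in F$, the condition $x_i\notin F$ is equivalent to $x\wedge x_i\notin F$, and therefore
\[
V_{(x\colon x_1,\ldots,x_n)}=V_x\setminus\bigvee_{i=1}^n V_{x\wedge x_i}.
\]
Each $x\wedge x_i$ is an element of $P$, so the right-hand side is built from elements of $\{V_x\colon x\in P\}$ using only the join and relative-complement operations of $\mathcal T_c(P)$ (and for $n=0$ the empty join is $\emptyset$, recovering $V_x$). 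Since an arbitrary element of $\mathcal T_c(P)$ is a finite join of sets of the form $V_{(x\colon x_1,\ldots,x_n)}$, it follows that every element of $\mathcal T_c(P)$ arises from $\{V_x\colon x\in P\}$ by finitely many generalized Boolean algebra operations, i.e.\ this set is contained in no proper subalgebra; hence it generates.

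I expect the only genuine content to be the displayed identity for $V_{(x\colon x_1,\ldots,x_n)}$ — that is, checking carefully, via the upward-closure and meet-closure axioms of a filter, that replacing each excluded element $x_i$ by $x\wedge x_i$ does not change the set described. Everything else is bookkeeping with finite unions together with the remark that compact open subsets of $T(P)$ are finite unions of basic ones.
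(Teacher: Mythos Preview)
Your proof is correct and follows essentially the same approach as the paper's: write an arbitrary element as a finite union of basic sets $V_{(x\colon x_1,\ldots,x_n)}$, use $V_{(x\colon x_1,\ldots,x_n)}\subseteq V_x$ for the covering claim, and express each basic set via Boolean operations on the $V_y$'s for the generation claim. One simplification: the detour through $x\wedge x_i$ is unnecessary, since already $V_{(x\colon x_1,\ldots,x_n)} = V_x \setminus \bigcup_{i=1}^n V_{x_i}$ holds directly from the definition of $V_{(x\colon x_1,\ldots,x_n)}$ (a filter containing $x$ lies in this set iff it omits each $x_i$), which is exactly the identity the paper uses.
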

	\begin{proof}
		Both of these facts follow from the fact that elements in $\mathcal T_c(P)$ can be written as finite unions of $V_{(x \colon x_1,\ldots, x_n)}$.  Let $U = \bigcup_{i=1}^m V_{(x_i \colon x_{i, 1}, \ldots x_{i, n_i})} \in \mathcal T_c(P)$ be arbitrary. Clearly $U \subseteq \bigcup_{i=1}^m V_{x_i}$ with $V_{x_i}$ in our set so the set is a cover. To prove that $\mathcal T_c(P)$ is generated by our set, note that $V_{(x \colon x_1,\ldots, x_n)} = V_x \setminus \left( \bigcup_{i=1}^n V_{x_i} \right)$ essentially by definition. Hence, we can form all elements in our basis with relative complements and unions on our set and so we can form the remaining elements with finite unions, so we are done.
	\end{proof}

	We next define the notion of a cover for an element $x$ in a meet semilattice $P$. This notion is intimately connected to the notion of a tight filter, which is our main object of interest. Note that this notion of a cover is unrelated to the cover of a generalized Boolean algebra, which was defined previously.
	
	\begin{definition}
		For any $x \in P$, a \textit{finite cover} of $x$ is a set $\{x_1, \ldots, x_n\} \subseteq x^{-}$ such that for all $0 \neq y \in x^{-}$, there exists some $i$ with $x_i \wedge y \neq 0$.
	\end{definition}

	The following theorem will be our main tool for characterizing when a filter is an ultra/tight filter.
	
	\begin{theorem} \label{ultratight} \;
		
		\begin{enumerate}
		\item A filter $\xi$ is an ultrafilter if and only for all $x \in P \setminus \xi$ there exists some $y \in \xi$ such that $y \wedge x = 0$.
		
		\item A filter $\xi$ is a tight filter if and only if for all $x \in \xi$ and every finite cover $\{x_1, \ldots, x_n\}$ of $x$ there exists some $i \in [1, n]$ such that $x_i \in \xi$.
		\end{enumerate}
	\end{theorem}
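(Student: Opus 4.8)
The plan is to establish the two characterizations in order, using part (1) as an ingredient in part (2).

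\emph{Part (1).} For $(\Leftarrow)$: if $\xi$ has the stated separation property and $\xi \subsetneq \eta$ for some filter $\eta$, choose $x \in \eta \setminus \xi$ and the $y \in \xi \subseteq \eta$ with $y \wedge x = 0$; then $0 = y \wedge x \in \eta$, so upward-closedness forces $\eta = P$, contradicting properness. Hence $\xi$ is maximal. For $(\Rightarrow)$: suppose $\xi$ is an ultrafilter, $x \notin \xi$, yet $y \wedge x \neq 0$ for every $y \in \xi$. I would form $\eta := \{\, w \in P : y \wedge x \leq w \text{ for some } y \in \xi \,\}$ and check it is a filter: upward-closed by construction, meet-closed since $(y_1 \wedge x) \wedge (y_2 \wedge x) = (y_1 \wedge y_2) \wedge x$ with $y_1 \wedge y_2 \in \xi$, nonempty since $\xi \neq \emptyset$, and proper because ``$0 \in \eta$'' is precisely the negation of the hypothesis. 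Since $\xi \subseteq \eta$ and $x \in \eta \setminus \xi$, this contradicts maximality of $\xi$. The only step needing care is verifying $\eta$ is a genuine filter.

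\emph{Part (2).} Write $\mathcal C$ for the collection of filters satisfying the cover condition, and recall $T(P) = \overline{U(P)}$ in $F(P)$; the assertion is exactly $T(P) = \mathcal C$. I would prove this in three moves. First, $U(P) \subseteq \mathcal C$: given an ultrafilter $\xi$, $x \in \xi$, and a finite cover $\{x_1, \ldots, x_n\}$ of $x$ with no $x_i \in \xi$, part (1) supplies $y_i \in \xi$ with $y_i \wedge x_i = 0$; then $w := x \wedge y_1 \wedge \cdots \wedge y_n \in \xi$ is nonzero, lies in $x^-$, and satisfies $w \wedge x_i = 0$ for every $i$ — contradicting that $\{x_1,\ldots,x_n\}$ covers $x$. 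Second, $\mathcal C$ is closed in $F(P)$: a filter violates the cover condition exactly when it belongs to $U_{(x \colon x_1, \ldots, x_n)}$ for some $x$ and some finite cover $\{x_1, \ldots, x_n\}$ of $x$, so $F(P) \setminus \mathcal C$ is a union of basic open sets. These two facts give $T(P) = \overline{U(P)} \subseteq \mathcal C$.

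The reverse inclusion $\mathcal C \subseteq T(P)$ is the technical core. Fix $\xi \in \mathcal C$; a basic open neighborhood of $\xi$ in $F(P)$ is some $U_{(x \colon x_1, \ldots, x_n)}$ with $x \in \xi$ and each $x_i \notin \xi$, and I must exhibit an ultrafilter inside it. Replacing $x_i$ by $x \wedge x_i$ (still outside $\xi$ by upward-closedness) moves everything into $x^-$; the set $\{x \wedge x_1, \ldots, x \wedge x_n\}$ cannot be a finite cover of $x$, since otherwise the cover condition applied to $x \in \xi \in \mathcal C$ would force some $x \wedge x_i \in \xi$, hence $x_i \in \xi$. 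So there is a nonzero $z \leq x$ with $z \wedge x_i = (x \wedge x_i)\wedge z = 0$ for all $i$. As $z \neq 0$, the principal filter $z^+$ is proper, and Zorn's lemma extends it to an ultrafilter $\eta$; then $x \in z^+ \subseteq \eta$ while $x_i \notin \eta$ for each $i$ (else $0 = z \wedge x_i \in \eta$), so $\eta \in U_{(x \colon x_1, \ldots, x_n)}$. Hence $\xi \in \overline{U(P)} = T(P)$, finishing part (2). I expect this last paragraph — the ``replace $x_i$ by $x \wedge x_i$ and invoke the cover condition'' reduction together with the Zorn extension — to be the main obstacle; the closedness step relies only on knowing the $U_{(x \colon x_1,\ldots,x_n)}$ form a basis of open sets, and everything else is routine bookkeeping about filters.
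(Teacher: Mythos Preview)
Your argument is correct in both parts: the filter $\eta$ in part~(1) is verified carefully, and in part~(2) the three-step strategy (ultrafilters satisfy the cover condition, the cover condition defines a closed set, and every filter with the cover condition is approximated by ultrafilters via the ``reduce to $x^-$ and extend $z^+$ by Zorn'' trick) goes through without gaps. The replacement of $x_i$ by $x \wedge x_i$ is harmless because for a filter containing $x$ one has $x_i \in \xi \iff x \wedge x_i \in \xi$, so the basic open set is unchanged.

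The paper, however, does not prove this theorem at all: it simply cites \cite[Lemma~2.4]{Lawson2009ANG} for part~(1) and \cite[Remark~1.4 and Proposition~2.25]{Lawson2011NonCommutativeSD} for part~(2). Your write-up is essentially a self-contained reconstruction of those cited arguments, which is more informative than the paper's treatment but not methodologically different from what one finds in the references.
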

	\begin{proof} \;
		\begin{enumerate}
			\item \cite[Lemma~2.4]{Lawson2009ANG} 
			\item \cite[Remark~1.4 and Proposition 2.25]{Lawson2011NonCommutativeSD}
		\end{enumerate}
	\end{proof}
	
	\subsection{Maps on Tight Filters Induced by Meet Subsemilattices}
	In this section, we will analyze how the tight filters of meet subsemilattices $P_1 \subseteq P_2$ interact with each other.
	
	\begin{remark}
		When writing $P_1 \subseteq P_2$, we mean the existence of an injective morphism of semilattices $P_1 \hookrightarrow P_2$ that preserves $0$. To avoid confusion of notation, for $x \in P_2$ we define $x^{P_1, +}$ to be elements of $P_1$ larger than $x$ and similarly define $x^{P_2, +}$ to be elements of $P_2$ larger than $x$ (with similar notation for $x^{P_1, -}$ and $x^{P_2, -}$). For $x \in P_1$, we denote $V_x^{P_1} \subseteq T(P_1)$ to be the tight filters of $P_1$ containing $x$ and $V_x^{P_2}$ to be the tight filters of $P_2$ containing $x$. Note that for the notation $x^{P_1, +}$ we may have $x \in P_1$ or $x \in P_2$, but for the notation $V_x^{P_1}$ we must have that $x \in P_1$ for the statement to make sense.
	\end{remark}

	\begin{lemma}
		Let $P_1 \subseteq P_2$. There is the following partially defined map:
		
		\[re^{\ast}: F(P_2) \dashrightarrow F(P_1) \text{ defined by } \xi \mapsto \xi \cap P_1\]
		$\text { where } \text{re}^{\ast}(\xi) \text{ is not defined whenever } \xi \cap P_1 = \emptyset$.
	\end{lemma}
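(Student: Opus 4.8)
The plan is to verify directly that whenever $\xi \in F(P_2)$ has $\xi \cap P_1 \neq \emptyset$, the set $\xi \cap P_1$ is a genuine filter of $P_1$, i.e.\ it is non-empty, proper, closed upward in $P_1$, and closed under meets. The first property is the hypothesis under which $re^{\ast}$ is declared defined, so there is nothing to check there. For closure under meets: if $a, b \in \xi \cap P_1$, then $a \land b \in \xi$ because $\xi$ is a filter in $P_2$, and $a \land b \in P_1$ because $P_1 \hookrightarrow P_2$ is a morphism of meet semilattices (it preserves the meet operation), so $a \land b \in \xi \cap P_1$.

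For upward closure within $P_1$: suppose $a \in \xi \cap P_1$ and $b \in P_1$ with $a \leq b$ in $P_1$. Since the inclusion $P_1 \hookrightarrow P_2$ preserves meets, it is order-preserving, so $a \leq b$ also holds in $P_2$; hence $b \in \xi$ because $\xi$ is closed upward in $P_2$, and $b \in P_1$ by assumption, giving $b \in \xi \cap P_1$. It remains to see that $\xi \cap P_1$ is proper, i.e.\ $\xi \cap P_1 \neq P_1$. This is the one point requiring a small argument: since $\xi$ is a filter of $P_2$ it does not contain $0$ (a filter is by definition not all of $P_2$ and is upward closed, so containing $0$ would force $\xi = P_2$); since the inclusion preserves $0$, the bottom element $0$ of $P_1$ equals the bottom element $0$ of $P_2$, so $0 \notin \xi \cap P_1$, and thus $\xi \cap P_1 \subsetneq P_1$.

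The only genuine subtlety — and the step I would flag as the main obstacle — is confirming that a filter in the sense used here cannot contain $0$; this is immediate from the definition given in the excerpt (a filter is non-trivial, meaning not $P$, and is upward closed, and $0^{+} = P$), but it is worth stating explicitly because it is exactly what makes $\xi \cap P_1$ proper. Everything else is a routine transfer of the filter axioms across the order- and meet-preserving inclusion $P_1 \hookrightarrow P_2$. I would therefore present the proof as: (i) observe $0 \notin \xi$ hence $0 \notin \xi \cap P_1$, giving properness together with the non-emptiness hypothesis; (ii) check meet-closure using that the inclusion preserves meets; (iii) check upward closure using that the inclusion is order-preserving. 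No appeal to the tightness or ultrafilter machinery of Section~\ref{meetlatticetight} is needed here.
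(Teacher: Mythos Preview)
Your proof is correct and follows essentially the same approach as the paper: both verify directly that $\xi \cap P_1$ is non-empty by hypothesis, proper because $0 \notin \xi$ and the inclusion preserves $0$, closed under meets because the inclusion preserves meets, and upward closed because the inclusion is order-preserving. The paper's presentation is slightly terser on the properness step, but the underlying argument is identical to yours.
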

	
	\begin{proof}
		We want to show that for all $\xi \in F(P_2)$, whenever $\xi \cap P_1 \neq \emptyset$ then $\xi \cap P_1$ is a filter of $P_2$.  To show that the resulting set is closed upwards, let $y \in \xi \cap P_1$ and let $z \in P_1$ such that $y \leq z$. Then we have that $y \in \xi$ and that $y \leq z$ in $P_2$ as well. Hence, this must mean that $z \in \xi$ so $z \in \xi \cap P_1$. $\xi$ is obviously closed under meets because the meet operation is preserved from $P_1 \subseteq P_2$. 
		
		No filter in $F(P_2)$ can contain $0 \in P_2$ so $\xi \cap P_1$ cannot contain $0 \in P_1$ and thus $\xi \cap P_1 \neq P_1$. Hence, $\xi \cap P_1 \subseteq P_1$ is a non-empty and not equal to $P_1$ set that is closed upwards in $P_1$ and under meets, so it is a filter of $P_1$.
	\end{proof}

	\begin{definition}
		We say that $P_1 \subseteq P_2$ \textit{preserves finite covers} if for all $x \in P_1$ and finite covers $\{x_i\}_{i = 1}^n \subseteq P_1$ of $x$ in $P_1$, we have that $\{x_i\}_{i=1}^n \subseteq P_2$ is a cover for $x$ in $P_2$. We often just say that $P_1 \subseteq P_2$ preserves finite covers.
	\end{definition}

	\begin{remark}
		To clarify this definition, we emphasize that the condition of $\{x_i\}_{i=1}^n$ being a cover of $x$ in $P_j$ means that for all $0 \neq y \in x^{P_j, -}$ there exists some $i$ such that $x_i \wedge y = 0$. Note that the condition of being a cover in $P_1$ is weaker than being a cover in $P_2$ because $x^{P_1, -} \subseteq x^{P_2, -}$.
	\end{remark}
	
	\begin{definition}
		For two topological spaces $X_1$ and $X_2$, a \textit{partially defined continuous map} $f: X_1 \dashrightarrow X_2$ is defined to be a continuous map $f: U \rightarrow X_2$ for some open set $U \subseteq X_1$.
	\end{definition}

	\begin{theorem} \label{preservecover}
		For $P_1 \subseteq P_2$ preserving finite covers, the following are true.
		
		\begin{enumerate}			
			\item $re^{\ast}$ restricted to $T(P_2)$ is a partially defined continuous map $re: T(P_2) \dashrightarrow T(P_1)$ defined whenever $\xi \in T(P_2)$ satisfies $\xi \cap P_1 \neq \emptyset$
			
			\item The domain of $re$ is $\bigcup_{x \in P_1} V^{P_2}_x$.
			
			\item  $re^{-1}(V^{P_1}_{(x \colon x_1, \ldots, x_n)}) = V^{P_2}_{(x \colon x_1, \ldots, x_n)}$ for any $n \geq 0$ and  $x, x_1, \ldots, x_n \in P_1$.
			
			\item $re^{-1}$ takes open compact sets of $T(P_1)$ to open compact sets of $T(P_2)$.
		\end{enumerate}
	
		We will write $re_{P_1}^{P_2}$ to refer to the restriction $T(P_2) \dashrightarrow T(P_1)$ when it is unclear which meet subsemilattices we are referring to.
	\end{theorem}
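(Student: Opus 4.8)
The plan is to handle the four claims in the order: well-definedness of $re$ (the bulk of (1)), then (2), then (3), then the continuity half of (1), and finally (4); everything except the tightness step is routine bookkeeping with the standard basic open sets.

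First I would show that $re^{\ast}$ carries $T(P_2)$ into $T(P_1)$ wherever it is defined. By the previous lemma, $\xi\cap P_1$ is already a filter of $P_1$ whenever it is nonempty, so the only thing to verify is tightness. Here I would use the characterization in Theorem~\ref{ultratight}(2): fix $x\in\xi\cap P_1$ and a finite cover $\{x_1,\dots,x_n\}\subseteq P_1$ of $x$ in $P_1$. Since $P_1\subseteq P_2$ preserves finite covers, $\{x_1,\dots,x_n\}$ is also a finite cover of $x$ in $P_2$, so tightness of $\xi$ in $T(P_2)$ gives some $x_i\in\xi$; as $x_i\in P_1$ this means $x_i\in\xi\cap P_1$. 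Hence $\xi\cap P_1$ satisfies the tightness criterion in $P_1$, so $re(\xi):=\xi\cap P_1\in T(P_1)$. I expect this to be the one step carrying real content; it is exactly where the ``preserves finite covers'' hypothesis is used.

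Next, claim (2) is immediate: the domain of $re$ is $\{\xi\in T(P_2)\colon \xi\cap P_1\neq\emptyset\}$, and $\xi\cap P_1\neq\emptyset$ precisely when $x\in\xi$ for some $x\in P_1$, i.e.\ when $\xi\in V^{P_2}_x$ for some $x\in P_1$; so the domain equals $\bigcup_{x\in P_1}V^{P_2}_x$, which is open. For claim (3), I would just unwind definitions: for $x,x_1,\dots,x_n\in P_1$, the condition $\xi\in re^{-1}\big(V^{P_1}_{(x\colon x_1,\dots,x_n)}\big)$ says that $re(\xi)$ is defined with $x\in re(\xi)$ and $x_i\notin re(\xi)$ for all $i$; since all $x_j$ lie in $P_1$ this is equivalent to $x\in\xi$ and $x_i\notin\xi$ for all $i$ (and $x\in\xi$ already forces $re(\xi)$ to be defined), i.e.\ $\xi\in V^{P_2}_{(x\colon x_1,\dots,x_n)}$. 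Since the sets $V^{P_1}_{(x\colon x_1,\dots,x_n)}$ form a basis of $T(P_1)$ and their $re$-preimages are the open sets $V^{P_2}_{(x\colon x_1,\dots,x_n)}$, the map $re\colon\bigcup_{x\in P_1}V^{P_2}_x\to T(P_1)$ is continuous, which together with (2) completes (1).

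Finally, for (4), I would use that every compact open subset of $T(P_1)$ is a finite union of basic sets $V^{P_1}_{(x\colon x_1,\dots,x_n)}$ with $x,x_j\in P_1$, that $re^{-1}$ commutes with finite unions, and that by (3) the preimage of each such basic set is the basic compact open set $V^{P_2}_{(x\colon x_1,\dots,x_n)}$ of $T(P_2)$; hence the preimage of a compact open set of $T(P_1)$ is a finite union of compact open sets of $T(P_2)$, so itself compact open. No single step beyond the tightness argument in (1) should present difficulty.
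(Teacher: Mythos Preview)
Your proposal is correct and follows essentially the same approach as the paper: the tightness of $\xi\cap P_1$ via Theorem~\ref{ultratight}(2) and the cover-preservation hypothesis, then (3) by unwinding definitions, continuity from (3) on the basis, and (4) from the basic compact open sets. The only cosmetic difference is that the paper proves (3) first and then reads off (2) as $\bigcup_{x\in P_1} re^{-1}(V^{P_1}_x)$, whereas you obtain (2) directly from the definition of the domain; both are equally immediate.
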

	\begin{proof}
		We will prove the statements together.

		We first show that if $P_1 \subseteq P_2$ preserves finite covers, then, whenever $\xi \in T(P_2)$ and $\xi \cap P_1 \neq \emptyset$, we have that $\xi \cap P_1 \in T(P_1)$. Assume that $\xi \in T(P_2)$ is a tight filter. Let $z \in \xi \cap P_1$ and consider some cover $\{z_i\}_{i=1}^n \subseteq P_1$ for $z$ in $P_1$. We then know that by assumption that $\{z_i\}_{i=1}^n$ is a cover for $z$ in $P_2$. Because $\xi$ is a tight filter of $P_2$, this means that for some $i$ we have that $z_i \in \xi$ and hence $z_i \in \xi \cap P_1$. By Theorem~\ref{ultratight}, $\xi \cap P_1$ is a tight filter of $P_1$ so the map $re: T(P_2) \dashrightarrow T(P_1)$ is well-defined which proves part of (1).
		
		We now show (3) \[re^{-1}(V^{P_1}_{(x \colon x_1, \ldots, x_n)}) = V^{P_2}_{(x \colon x_1, \ldots, x_n)}\] Note that $re^{-1}(V^{P_1}_{(x \colon x_1, \ldots, x_n)}) \subseteq V^{P_2}_{(x \colon x_1, \ldots, x_n)}$ because any tight filter $\xi \in T(P_2)$ such that $\xi \cap P_1$ contains $x$ but not $x_1, \ldots, x_n$ clearly has a similar fact hold in $P_2$. In the other direction, let $\xi \in V^{P_2}_{(x \colon x_1, \ldots, x_n)}$. Because $x \in \xi$, we have that $\xi \cap P_1 \neq \emptyset$,  so $\xi \cap P_1$ is a well-defined tight filter and clearly contains both $x$ and doesn't contain $x_1, \ldots, x_n$. Hence, $\xi \in re^{-1}(V^{P_1}_{(x \colon x_1, \ldots, x_n)})$ and so we have proved (3). 
		
		Because $\{V^{P_1}_x\}_{x \in P_1}$ is a cover for $T(P_1)$, from (3) we get that the domain is \[\bigcup_{x \in P_1} re^{-1}(V^{P_1}_x) = \bigcup_{x \in P_1} V^{P_2}_x\] which proves (2).
		
		The map is partial continuous because our domain is open and the preimage of open sets in a basis of $T(P_1)$ are open sets in $T(P_2)$, so we can extend that result to all open sets in $T(P_1)$, which finishes proving (1).
		
		(4) follows from the fact that sets of the form $V^{P_1}_{(x \colon x_1, \ldots, x_n)}$ forms a compact open basis of $T(P_1)$ and $re^{-1}(V^{P_1}_{(x, \colon x_1, \ldots, x_n)})$ is a compact open set of $T(P_2)$.

	\end{proof}
	
	\begin{example}
		As an example, we present a case of $P_1 \subseteq P_2$ not preserving finite covers where there is a tight filter $\xi \in T(P_2)$ such that $\xi \cap P_1 \neq \emptyset$ and $\xi \cap P_1$ is not a tight filter of $P_1$.
		
		Consider the meet semilattice $P_2$ formed by subsets of $\{1, 2\}$ by the sets \[P_2 = \{\emptyset, \{1\}, \{2\}, \{1, 2\}\}\] where meets are just intersections. Now let $P_1 \subseteq P_2$ be formed by \[P_1 = \{\emptyset, \{1\}, \{1, 2\}\}\]
		
		$P_1 \subseteq P_2$ does not preserve finite covers because the $\{\{1\}\}$ forms a finite cover for $\{1, 2\}$ in $P_1$, yet $\{\{1\}\}$ does not form a finite cover for $\{1, 2\}$ in $P_2$ because $\{2\}\leq \{1, 2\}$ and $\{2\} \cap \{1\} = \emptyset$.

		Let $\xi = \{\{2\}, \{1, 2\}\} \in U(P_2) \subseteq T(P_2)$. However, $\xi \cap P_1 = \{\{1, 2\}\}$ which is not a tight filter in $P_1$ because $\{\{1\}\}$ forms a finite cover for $\{1, 2\}$ in $P_1$, yet $\{1\} \notin \xi$.
		
	\end{example}
	
	Recall that we are interested in the generalized Boolean algebra of the compact open sets of $T(P)$, denoted as $\mathcal T_c(P)$. The next theorem connects our previous results to $\mathcal T_c(P)$.
	
	\begin{theorem} \label{pullbackinj}
		Let $P_1 \subseteq P_2$ preserve finite covers. Then the preimages of $re: T(P_2) \dashrightarrow T(P_1)$ induce an injective map of generalized Boolean algebras \[re^{-1}: \mathcal T_c(P_1) \hookrightarrow  \mathcal T_c(P_2)\] with identification \[U \mapsto re^{-1}(U)\]\[V^{P_1}_{(x \colon x_1, \ldots, x_n)} \mapsto V^{P_2}_{(x \colon x_1, \ldots, x_n)}\] for $n \geq 0$ and  $x, x_1, \ldots, x_n \in P_1$.
	\end{theorem}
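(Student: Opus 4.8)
The plan is to check three things in order: that $re^{-1}$ is a well-defined map $\mathcal{T}_c(P_1)\to\mathcal{T}_c(P_2)$ with the stated effect on the basic sets, that it is a morphism of generalized Boolean algebras, and that it is injective; the first two are routine and the third is the real content. For well-definedness, Theorem~\ref{preservecover}(3) already gives $re^{-1}(V^{P_1}_{(x\colon x_1,\ldots,x_n)})=V^{P_2}_{(x\colon x_1,\ldots,x_n)}$ whenever $x,x_1,\ldots,x_n\in P_1$, and since every $U\in\mathcal{T}_c(P_1)$ is a finite union of such basic sets, $re^{-1}(U)$ is the corresponding finite union of $V^{P_2}_{(\cdot)}$'s, hence a compact open subset of $T(P_2)$ --- which is exactly Theorem~\ref{preservecover}(4). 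That $re^{-1}$ is a generalized Boolean algebra morphism is the standard fact that set-theoretic preimage commutes with finite unions, finite intersections and set difference and sends $\emptyset$ to $\emptyset$; the only point to note is that $re^{-1}(U)\subseteq\mathrm{dom}(re)$ for every $U$, so the relative complement computed inside $\mathcal{T}_c(P_2)$ (set difference in $T(P_2)$) agrees with $re^{-1}(U)\setminus re^{-1}(U')$.

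For injectivity the key claim is that the image of the partially defined map $re\colon\mathrm{dom}(re)\to T(P_1)$ is \emph{dense} in $T(P_1)$. Granting this, injectivity is formal: if $U,U'\in\mathcal{T}_c(P_1)$ with $U\neq U'$, then after possibly interchanging them $U\setminus U'$ is a nonempty compact open subset of $T(P_1)$, so it meets $\mathrm{im}(re)$, and therefore $re^{-1}(U)\setminus re^{-1}(U')=re^{-1}(U\setminus U')\neq\emptyset$, whence $re^{-1}(U)\neq re^{-1}(U')$.

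To prove the density claim I would show $\mathrm{im}(re)\supseteq U(P_1)$; since $T(P_1)$ is by definition the closure of $U(P_1)$, this forces $\mathrm{im}(re)$ to be dense. Fix $\eta_1\in U(P_1)$ and let $\eta_1^{P_2,+}$ be its upward closure in $P_2$. This is a filter of $P_2$: it is upward closed by construction; it is closed under meets because $P_1\hookrightarrow P_2$ preserves meets, so if $a\ge a'$ and $b\ge b'$ with $a',b'\in\eta_1$ then $a\wedge b\ge a'\wedge b'\in\eta_1$; and $0\notin\eta_1^{P_2,+}$, since $0$ lying above some $a'\in\eta_1$ would force $a'=0$, contradicting $0\notin\eta_1$. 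By Zorn's lemma $\eta_1^{P_2,+}$ extends to some $\eta_2\in U(P_2)\subseteq T(P_2)$. Then $\eta_2\cap P_1$ is a filter of $P_1$ (as shown in the lemma preceding Theorem~\ref{preservecover}) containing the ultrafilter $\eta_1$, hence $\eta_2\cap P_1=\eta_1$; in particular $\eta_2\cap P_1\neq\emptyset$, so $\eta_2\in\mathrm{dom}(re)$ by Theorem~\ref{preservecover}(2) and $re(\eta_2)=\eta_1$. (If $T(P_1)=\emptyset$ then $U(P_1)=\emptyset$ and $\mathcal{T}_c(P_1)=\{\emptyset\}$, so there is nothing to prove.)

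The main obstacle is handling the partiality of $re$ with care: one must check that $\eta_1^{P_2,+}$ really avoids $0$ so that an extending ultrafilter of $P_2$ exists, and --- crucially --- that restricting such an ultrafilter back to $P_1$ recovers $\eta_1$ \emph{exactly}, which uses maximality of $\eta_1$ among filters of $P_1$ and not merely its tightness. The remaining verifications are bookkeeping about preimages and the compact open basis of $T(P_1)$.
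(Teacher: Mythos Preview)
Your proposal is correct and follows essentially the same approach as the paper: both reduce injectivity to showing that every ultrafilter of $P_1$ lies in the image of $re$, lift a given $\eta_1\in U(P_1)$ to an ultrafilter of $P_2$ via Zorn's lemma, and then use maximality of $\eta_1$ to conclude that restricting back to $P_1$ recovers $\eta_1$ exactly. The only cosmetic difference is that you explicitly construct the filter $\eta_1^{P_2,+}$ before invoking Zorn, whereas the paper simply asserts the existence of an ultrafilter $\xi^{P_2}\supseteq\xi$ as a standard exercise; your version is a touch more careful but not substantively different.
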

	\begin{proof}
		The map is well-defined using (1) and (4) in Theorem~\ref{preservecover}. $re^{-1}$ is a map of generalized Boolean algebras by properties of the preimage. The preimage of $V^{P_1}_{(x \colon x_1, \ldots, x_n)}$ is given by (3) in Theorem~\ref{preservecover}.
		
		It remains to prove that $re^{-1}$ is injective. To do this, first note that because $re^{-1}$ is a map of generalized Boolean algebras, we just have to prove that no element $\emptyset \neq V \in \mathcal T_c(P_1)$ satisfies $re^{-1}(V) = \emptyset$.
		
		For any open set $\emptyset \neq V \subseteq T(P_1)$, $V$ must contain an ultrafilter because $T(P_1)$ is the closure of $U(P_1)$ in $F(P_1)$. Hence, to prove that $re^{-1}(V)$ is non-empty, we just need to prove that every ultrafilter in $T(P_1)$ is the image of an element in $T(P_2)$ by the map $re$.
		
		Let $\xi \in U(P_1)$ be an ultrafilter. A standard exercise with Zorn's lemma proves that there exists some $\xi^{P_2} \in U(P_2)$ such that $\xi \subseteq \xi^{P_2}$. We claim that $\xi = \xi^{P_2} \cap P_1$, which will prove our claim.
		
		Because $\xi \subseteq \xi^{P_2}$, we have that $\xi \subseteq \xi^{P_2} \cap P_1$. If the inclusion was proper, then there would be some $x \in (\xi^{P_2} \cap P_1) \setminus \xi$. Because $x \in P_1 \setminus \xi$ and $\xi$ is an ultrafilter in $P_1$, there must be some $y \in \xi$ such that $x \wedge y = 0$ using Theorem~\ref{ultratight}. But note that $x, y \in \xi^{P_2}$ which implies that $x \wedge y = 0 \in \xi^{P_2}$, a contradiction.
	\end{proof}
	
	\begin{remark}
		Because of the above identification, we can write $V_x$ to refer to either $V^{P_1}_x$ or $V^{P_2}_x$ when $P_1 \subseteq P_2$ preserves finite covers. Namely, when we are interpreting $\mathcal T_c(P_1) \subseteq \mathcal T_c(P_2)$, for $x \in P_2$ we use $V_x$ to refer to $V_x^{P_2} \in \mathcal T_c(P_2)$. If $x \in P_1$, then we can refer to either $V^{P_1}_x$ or $V^{P_2}_x$, but in all arguments we can freely switch between the notations because they are identified together.
	\end{remark}
	
	Because the definition of preserving covers can be slightly difficult to prove, we present a few conditions on $P_1 \subseteq P_2$ that are sufficient for preserving finite covers.
	
	\begin{lemma} \label{thmpreservecover}
		If for all $x \in P_2$, either $x^{P_1, +} = \emptyset$ or there exists $0 \neq y \in P_1$ such that for all $0 \neq y' \in y^{P_1, -}$, we have that $y' \wedge x \neq 0$, then $P_1 \subseteq P_2$ preserves finite covers.
	\end{lemma}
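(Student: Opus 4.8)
The plan is to unwind the definition of ``preserving finite covers'' and then chase the hypothesis through a short sequence of meets, alternating between $P_1$ and $P_2$. So I would fix $x \in P_1$ together with a finite cover $\{x_1, \ldots, x_n\} \subseteq x^{P_1, -}$ of $x$ in $P_1$, and show that for every $0 \neq z \in x^{P_2, -}$ there is an index $i$ with $x_i \wedge z \neq 0$. The first move is to apply the hypothesis to the element $z \in P_2$: since $z \leq x$ with $x \in P_1$, the set $z^{P_1, +}$ contains $x$ and is therefore nonempty, so the first alternative of the hypothesis is excluded; hence there is some $0 \neq y \in P_1$ such that $y' \wedge z \neq 0$ for every $0 \neq y' \in y^{P_1, -}$.

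Next I would pass from $y$ to $y \wedge x$. This element lies in $P_1$, since $P_1$ is closed under meets, and satisfies $y \wedge x \leq x$, so $y \wedge x \in x^{P_1, -}$; moreover, taking $y' = y$ in the property above gives $y \wedge z \neq 0$, and since $z \leq x$ forces $y \wedge z \leq y \wedge x$, we get $y \wedge x \neq 0$. Now I invoke the hypothesis that $\{x_1, \ldots, x_n\}$ covers $x$ in $P_1$: applied to the nonzero element $y \wedge x \in x^{P_1, -}$, it yields an index $i$ with $x_i \wedge (y \wedge x) \neq 0$, which simplifies to $x_i \wedge y \neq 0$ because $x_i \leq x$.

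Finally, I would feed $x_i \wedge y$ back into the defining property of $y$. It belongs to $P_1$, lies below $y$, and is nonzero, so $(x_i \wedge y) \wedge z \neq 0$; since $(x_i \wedge y) \wedge z \leq x_i \wedge z$, we conclude $x_i \wedge z \neq 0$, which is exactly what was needed. Thus $\{x_1, \ldots, x_n\}$ is a cover of $x$ in $P_2$.

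I do not anticipate a genuine obstacle here; the only thing that requires care is the bookkeeping — tracking which meets remain inside $P_1$ and which of the inequalities $z \leq x$, $x_i \leq x$, $x_i \wedge y \leq y$ is being used to push the non-vanishing of a meet up or down the order. The argument is essentially a ``sandwich'': the witness $y$ produced by the hypothesis controls $z$ from inside $P_1$, the $P_1$-cover then controls $y \wedge x$, and the two facts chain together precisely because every meet that appears can be taken within $P_1$.
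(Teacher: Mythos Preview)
Your proof is correct and follows essentially the same approach as the paper's own proof: apply the hypothesis to the test element to obtain the witness $y$, use the cover property in $P_1$ on $y \wedge x$ to extract an index $i$, and then feed $x_i \wedge y$ back through the defining property of $y$. The only difference is notational---the paper swaps the roles of $x$ and $z$ (using $z$ for the covered element and $x$ for the test element below it).
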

	\begin{proof}
		Let $z \in P_1$ and $\{z_i\}$ be a cover of $z$ in $P_1$. Consider $0 \neq x \in z^{P_2, -}$. 
		
		Clearly $x \leq z$ with $z \in P_1$, so $x^{P_1, +} \neq \emptyset$ and thus by the assumption there exists $y \in P_1$ such that all $0 \neq y' \in y^-$ satisfy $y' \wedge x \neq 0$. Note that this also implies that $y \wedge x \neq 0$ and hence \[y \wedge z \neq 0\] because $y \wedge x \neq 0$ and $x \leq z$.
		
		Because $\{z_i\}$ is a cover of $z$ in $P_1$ and $0 \neq y \wedge z \leq z$ with $y \land z \in P_1$, there exists some $i$ such that \[(y \wedge z) \wedge z_i \neq 0 \Rightarrow y \wedge z_i \neq 0\] For this particular $i$, $0 \neq y \wedge z_i \in y^{P_1, -} $ and hence by the assumption we have that \[(y \wedge z_i) \wedge x \neq 0 \Rightarrow z_i \wedge x \neq 0\]
	\end{proof}
	
	\begin{remark}
		For each $x \in P_2$ such that $x^{P_1, +} \neq \emptyset$, in order to prove the condition for Lemma~\ref{thmpreservecover}, it is enough to just show that there exists $0 \neq y \in P_1$ where $y \leq x$. However, we state our condition in more generality for later applications.
	\end{remark}
	
	\begin{corollary}\label{downwardpreserve}
		Let $P_1 \subseteq P_2$ be closed downwards. Then $P_1 \subseteq P_2$ preserves finite covers.
	\end{corollary}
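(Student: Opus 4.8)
The plan is to deduce this directly from Lemma~\ref{thmpreservecover}, whose hypothesis is essentially tailor-made for this situation. Recall that lemma reduces ``$P_1 \subseteq P_2$ preserves finite covers'' to the following condition: for each $x \in P_2$, either $x^{P_1, +} = \emptyset$, or there is a nonzero $y \in P_1$ such that $y' \wedge x \neq 0$ for every nonzero $y' \in y^{P_1, -}$. Moreover, by the remark following Lemma~\ref{thmpreservecover}, it is enough to exhibit, for each $x \in P_2$ with $x^{P_1, +} \neq \emptyset$, a single nonzero $y \in P_1$ with $y \leq x$.

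So I would argue as follows. Fix a nonzero $x \in P_2$ with $x^{P_1, +} \neq \emptyset$, and choose any $z \in P_1$ with $x \leq z$. Since $P_1$ is closed downwards in $P_2$ and $x \leq z$ with $z \in P_1$, we immediately get $x \in P_1$. Now take $y = x$ itself: it lies in $P_1$, it is nonzero by hypothesis, and every nonzero $y' \in y^{P_1, -}$ satisfies $y' \leq x$, hence $y' \wedge x = y' \neq 0$. This is exactly the condition demanded by Lemma~\ref{thmpreservecover}, so $P_1 \subseteq P_2$ preserves finite covers.

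There is essentially no obstacle here: the real content was already absorbed into Lemma~\ref{thmpreservecover}, and downward closure is precisely the property that lets us take the required witness $y$ to be $x$ itself (rather than having to manufacture some smaller element of $P_1$ sitting below $x$). The only point worth a remark is the degenerate element $x = 0$, which is harmless since $0^{P_2, -} = \{0\}$ contains no nonzero elements, so $0$ never enters into the cover-testing condition.
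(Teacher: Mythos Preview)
Your proof is correct and follows essentially the same approach as the paper: both invoke Lemma~\ref{thmpreservecover} and observe that downward closure forces $x \in P_1$ whenever $x^{P_1,+} \neq \emptyset$, allowing the choice $y = x$. Your treatment is in fact slightly more careful, since you explicitly address the degenerate case $x = 0$ (which the paper's proof glosses over).
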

	\begin{proof}
		Using notation from Lemma~\ref{thmpreservecover}, for all $x \in P_2$ if $x^{P_1, +} \neq \emptyset$ then $x \in P_1$ because $P_1 \subseteq P_2$ is closed downwards, so we take $y=x \in P_1$.
	\end{proof}
	
	\begin{corollary} \label{idealcloseddown}
		Let $P_1 \subseteq P_2$ be closed downwards. Then we can view $\mathcal T_c(P_1) \subseteq \mathcal T_c(P_2)$ as a generalized Boolean subalgebra that identifies $V^{P_1}_x \mapsto V^{P_2}_{x}$ for $x \in P_1$.
	\end{corollary}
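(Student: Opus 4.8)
The plan is to derive this directly from the two preceding results, since all the real content is already in place. First I would apply Corollary~\ref{downwardpreserve}: because $P_1 \subseteq P_2$ is closed downwards, it preserves finite covers. This is precisely the hypothesis needed to invoke Theorem~\ref{pullbackinj}, which produces an injective morphism of generalized Boolean algebras
\[
re^{-1} \colon \mathcal T_c(P_1) \hookrightarrow \mathcal T_c(P_2)
\]
carrying $V^{P_1}_{(x \colon x_1, \ldots, x_n)}$ to $V^{P_2}_{(x \colon x_1, \ldots, x_n)}$ for all $n \geq 0$ and $x, x_1, \ldots, x_n \in P_1$. Specializing to $n = 0$ gives the asserted identification $V^{P_1}_x \mapsto V^{P_2}_x$ for $x \in P_1$.

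It then remains only to observe that an injective morphism of generalized Boolean algebras exhibits its source as a generalized Boolean subalgebra of its target (the image is closed under $\wedge$, $\vee$, $\setminus$, and contains $0$, and the map is an isomorphism onto its image), so $\mathcal T_c(P_1) \subseteq \mathcal T_c(P_2)$ in the desired sense. There is no genuine obstacle here: the corollary is a packaging of Corollary~\ref{downwardpreserve} and Theorem~\ref{pullbackinj}, recorded separately because the "closed downwards" hypothesis is the form in which it will be used in later applications.
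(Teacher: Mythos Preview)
Your proposal is correct and matches the paper's proof exactly: the paper simply says ``Apply Corollary~\ref{downwardpreserve} and Theorem~\ref{pullbackinj},'' and you have spelled out precisely those two steps together with the $n=0$ specialization. There is nothing to add.
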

	\begin{proof}
		Apply Corollary~\ref{downwardpreserve} and Theorem~\ref{pullbackinj}.
	\end{proof}

	Because we want to generate ideals of a generalized Boolean algebra for our construction, we are also interested in when $P_1 \subseteq P_2$ preserving finite covers induces an ideal $\mathcal T_c(P_1) \subseteq \mathcal T_c(P_2)$. 
	
	\begin{definition}
		Let $P_1 \subseteq P_2$ preserve finite covers. Call $P_1 \subseteq P_2$ \textit{tight} if the induced subalgebra $\mathcal T_c(P_1) \subseteq \mathcal T_c(P_2)$ is also an ideal.
	\end{definition}

	We often split up these definitions and say that $P_1 \subseteq P_2$ preserves finite covers and is tight, even though preserving finite covers is inherently included in the definition of tight.
	
	Our next lemma gives a sufficient condition for $P_1 \subseteq P_2$ being tight.
	\begin{lemma} \label{tightsemigroup}
		$P_1 \subseteq P_2$ is tight if $P_1 \subseteq P_2$ preserves finite covers and for all $x \in P_2$ either $x^{P_1, +} = \emptyset$ or there exists some $y \in x^{P_1,+}$ and finite set $\{y_1, \ldots, y_n\} \subseteq y^{P_1, -}$ such $y_i \wedge x = 0$ for all $i \in [1, n]$ and $\{y_1, \ldots, y_n, x\}$ is a cover for $y$ in $P_2$.
	\end{lemma}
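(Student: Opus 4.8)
The plan is to verify directly that the subalgebra $\mathcal T_c(P_1)\subseteq\mathcal T_c(P_2)$ furnished by Theorem~\ref{pullbackinj} absorbs meets against arbitrary elements of $\mathcal T_c(P_2)$; since it is already closed under $\vee$ and $\setminus$ (being a subalgebra), this is exactly what is needed for it to be an ideal, hence for $P_1\subseteq P_2$ to be tight. Everything reduces, by formal generalized Boolean algebra manipulation, to the following claim: for every $x\in P_1$ and $z\in P_2$ one has $V_x^{P_2}\cap V_z^{P_2}\in\mathcal T_c(P_1)$.

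First I would carry out the reduction. Note $V_x^{P_2}\cap V_z^{P_2}=V_{x\wedge z}^{P_2}$, since a filter contains $x$ and $z$ iff it contains $x\wedge z$. Let $\mathcal J$ be the set of $W\in\mathcal T_c(P_2)$ such that $V_x^{P_2}\cap W\in\mathcal T_c(P_1)$ for all $x\in P_1$. Using distributivity together with the fact that $\mathcal T_c(P_1)$ is closed under $\vee$ and $\setminus$, one checks $\mathcal J$ is a generalized Boolean subalgebra of $\mathcal T_c(P_2)$; closure of $\mathcal J$ under $\wedge$ uses that, by Lemma~\ref{covgen}, any element of $\mathcal T_c(P_1)$ is a finite $\vee/\setminus$-combination of sets $V_y^{P_1}$ with $y\in P_1$, so that $(V_x^{P_2}\cap W_1)\cap W_2$ can be distributed back onto terms $V_y^{P_2}\cap W_2\in\mathcal T_c(P_1)$ ($y\in P_1$, $W_2\in\mathcal J$). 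The Claim says $V_z^{P_2}\in\mathcal J$ for all $z\in P_2$, and since $\{V_z^{P_2}\}_{z\in P_2}$ generates $\mathcal T_c(P_2)$ (Lemma~\ref{covgen} applied to $P_2$), we get $\mathcal J=\mathcal T_c(P_2)$. Finally, writing an arbitrary $A\in\mathcal T_c(P_1)$ as a finite $\vee/\setminus$-combination of the $V_x^{P_1}$ ($x\in P_1$), applying $re^{-1}$, and distributing $\cap W$ shows $A\cap W\in\mathcal T_c(P_1)$ for all $W\in\mathcal T_c(P_2)$, i.e. $\mathcal T_c(P_1)$ is an ideal.

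Then I would prove the Claim. Set $w=x\wedge z\in P_2$. Since $w\le x$ and $x\in P_1$, we have $x\in w^{P_1,+}$, so $w^{P_1,+}\neq\emptyset$, and the hypothesis supplies $y\in w^{P_1,+}$ and $\{y_1,\dots,y_n\}\subseteq y^{P_1,-}$ with $y_i\wedge w=0$ for all $i$ and $\{y_1,\dots,y_n,w\}$ a cover of $y$ in $P_2$. I would then show $V_w^{P_2}=V_{(y\,:\,y_1,\dots,y_n)}^{P_2}$. For $\subseteq$: if $w\in\xi$ then $y\in\xi$ because $w\le y$, and if some $y_i\in\xi$ then $0=y_i\wedge w\in\xi$, impossible. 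For $\supseteq$: if $y\in\xi$ and $y_1,\dots,y_n\notin\xi$, then, since $\xi$ is a tight filter of $P_2$ and $\{y_1,\dots,y_n,w\}$ is a finite cover of $y\in\xi$, Theorem~\ref{ultratight}(2) forces some element of $\{y_1,\dots,y_n,w\}$ into $\xi$, and it must be $w$. As $y,y_1,\dots,y_n\in P_1$, Theorem~\ref{preservecover}(3) gives $V_{(y\,:\,y_1,\dots,y_n)}^{P_2}=re^{-1}\big(V_{(y\,:\,y_1,\dots,y_n)}^{P_1}\big)$, which lies in $\mathcal T_c(P_1)$ under the identification of Theorem~\ref{pullbackinj}; hence $V_w^{P_2}\in\mathcal T_c(P_1)$, proving the Claim.

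The main obstacle is precisely the identity $V_w^{P_2}=V_{(y\,:\,y_1,\dots,y_n)}^{P_2}$: this is the only place the tight-cover hypothesis is actually consumed, and it rests on the characterization of tight filters via finite covers in Theorem~\ref{ultratight}(2). A small point to keep straight is that $\{y_1,\dots,y_n,w\}\subseteq y^{P_2,-}$, so that it genuinely qualifies as a finite cover of $y$ in $P_2$; this holds since $y_i\le y$ in $P_1\subseteq P_2$ and $w\le y$. The rest of the argument is routine bookkeeping with generalized Boolean algebra operations and the functoriality of $re^{-1}$ established in Theorems~\ref{preservecover} and~\ref{pullbackinj}.
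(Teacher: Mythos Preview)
Your proof is correct and follows essentially the same approach as the paper: both reduce to showing $V_x^{P_2}\cap V_z^{P_2}\in\mathcal T_c(P_1)$ for $x\in P_1$, $z\in P_2$, and both establish this via the identity $V_{x\wedge z}^{P_2}=V_{(y:y_1,\dots,y_n)}^{P_2}$ using Theorem~\ref{ultratight}(2). Your reduction to generators via the auxiliary set $\mathcal J$ is more careful than the paper's (which simply asserts that checking on generators suffices), but the substance is identical.
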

	
	\begin{proof}
		
		We want to prove that for all $U \in \mathcal T_c(P_1)$ and $U' \in \mathcal T_c(P_2)$, we have that $U \cap U' \in \mathcal T_c(P_1)$ using the identification $\mathcal T_c(P_1) \subseteq \mathcal T_c(P_2)$ from Theorem~\ref{pullbackinj}. It suffices to prove this for the elements that generate the respective generalized Boolean algebras. By Lemma~\ref{covgen}, these are sets of the form $U = V_{x}$ and $U' = V_{y}$ with any $x \in P_1$ and $y \in P_2$.
		
	   It is easy to see that $V_x \cap V_y = V_{x \wedge y}$ because a tight filter of $P_2$ contains $x$ and $y$ if and only if it contains $x \wedge y$. Hence, we need to prove that \[V_{x \wedge y} \in \mathcal T_c(P_1)\] for any $x \in P_1$ and $y \in P_2$. 
	   
	   Because $x \wedge y \leq x \in P_1$, we have that $(x \wedge y)^{P_1, +} \neq \emptyset$ so by assumption there exists a some $z \in P_1$ such that $x \wedge y \leq z$ and a finite set $\{z_1, \ldots, z_n\} \subseteq z^{P_1, -}$ such that $\{z_1, \ldots, z_n, x \wedge y\}$ is a cover for $z$ in $P_2$ and $z_i \wedge (x \wedge y) = 0$ for all $i$. We will show that \[V_{x \wedge y} = V_{(z \colon z_1, \ldots, z_n)}\] Because $z, z_1, \ldots, z_n \in P_1$, we have that $V_{(z \colon z_1, \ldots, z_n)} \in \mathcal T_c(P_1)$ so this would be sufficient.
		
		Note that $V_{x \wedge y} \subseteq V_{(z \colon z_1, \ldots, z_n)}$ because any tight filter $\xi \in V_{x \wedge y}$ must contain $z$ because $x \wedge y \leq z$ and also cannot contain any $z_i$ because $z_i \wedge (x \wedge y) = 0$. In the other direction, for any tight filter $\xi \in V_{(z \colon z_1, \ldots, z_n)}$, because $\{z_1,\ldots, z_n, x \wedge y\}$ forms a cover of $z$ and $z \in \xi$ with $z_i \notin \xi$, so using the characterization of tight filters in Theorem~\ref{ultratight} we deduce that $x \wedge y \in \xi$ and so $\xi \in V_{x \wedge y}$.
	\end{proof}
	
	\begin{corollary}\label{downwardideal}
		Let $P_1 \subseteq P_2$ be closed downwards. Then $P_1 \subseteq P_2$ preserves finite covers and is tight. Hence, $\mathcal T_c(P_1) \subseteq \mathcal T_c(P_2)$ is well-defined and an ideal.
	\end{corollary}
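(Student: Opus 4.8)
The plan is to deduce both claims from results already established, namely Corollary~\ref{downwardpreserve} and Lemma~\ref{tightsemigroup}, so the proof will be essentially a matter of correctly instantiating the latter.

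First, since $P_1 \subseteq P_2$ is closed downwards, Corollary~\ref{downwardpreserve} immediately yields that $P_1 \subseteq P_2$ preserves finite covers; it therefore also follows (via Corollary~\ref{idealcloseddown} / Theorem~\ref{pullbackinj}) that $\mathcal T_c(P_1) \subseteq \mathcal T_c(P_2)$ is a well-defined generalized Boolean subalgebra. What remains is to check that it is an ideal, i.e. that $P_1 \subseteq P_2$ is \emph{tight}, and for this I would verify the hypothesis of Lemma~\ref{tightsemigroup}.

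To that end, fix $x \in P_2$ and suppose $x^{P_1,+} \neq \emptyset$, say $x \le w$ with $w \in P_1$. Because $P_1$ is closed downwards in $P_2$ and $x \le w \in P_1$, we conclude $x \in P_1$. I would then apply Lemma~\ref{tightsemigroup} with the choices $y := x \in x^{P_1,+}$ and the empty family (that is, $n = 0$): the condition ``$y_i \wedge x = 0$ for all $i$'' becomes vacuous, and the requirement that $\{y_1,\dots,y_n,x\} = \{x\}$ be a cover of $y = x$ in $P_2$ holds trivially, since for every $0 \neq y' \in x^{P_2,-}$ one has $x \wedge y' = y' \neq 0$. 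Hence the hypothesis of Lemma~\ref{tightsemigroup} is satisfied, so $P_1 \subseteq P_2$ is tight, and by the definition of tight the subalgebra $\mathcal T_c(P_1) \subseteq \mathcal T_c(P_2)$ is an ideal.

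There is no genuine obstacle in this argument; the only points that require a moment's care are that Lemma~\ref{tightsemigroup} is stated so as to permit $n = 0$ (so the degenerate one-element ``cover'' $\{x\}$ of $x$ serves as the witness), and that closed-downwardness of $P_1$ is precisely what forces every $x \in P_2$ possessing an upper bound in $P_1$ to already lie in $P_1$ — which is what makes the choice $y = x \in P_1$ legitimate.
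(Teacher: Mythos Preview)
Your proposal is correct and matches the paper's own proof essentially verbatim: the paper also invokes Corollary~\ref{downwardpreserve}/Corollary~\ref{idealcloseddown} for the first part and then applies Lemma~\ref{tightsemigroup} with the same choice $y = x$ and $n = 0$, noting that any element covers itself.
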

	
	\begin{proof}
		If $P_1 \subseteq P_2$ is closed downwards, then $P_1 \subseteq P_2$ preserves finite covers and $\mathcal T_c(P_1) \subseteq \mathcal T_c(P_2)$ is well-defined by Corollary~\ref{idealcloseddown}. To prove that $P_1 \subseteq P_2$ is tight, in the notation of Lemma~\ref{tightsemigroup}, for any $x \in P_2$ such that $x^{P_1, +} \neq \emptyset$, we have that $x \in P_1$. Hence, we take $y = x$ and $n = 0$ as any element is a cover of itself, and so we are done.
	\end{proof}
	
	For $P_1 \subseteq P_2$ preserving finite covers and tight, we describe the ideal $\mathcal T_c(P_1) \subseteq \mathcal T_c(P_2)$ as a collection of open compact sets in $T(P_2)$. This will make certain arguments easier in the future. Recall that for an open set $U$ of a topological space $X$, we define $\mathcal O_c(X)(U)$ as the open compact sets of $X$ contained in $U$.
	
	\begin{lemma} \label{openident}
	Let $P_1 \subseteq P_2$ preserve finite covers and be tight. Define the open set $Y = \bigcup_{x \in P_1} V_x \subseteq T(P_2)$. Then, $\mathcal T_c(P_1) = \mathcal O_c(T(P_2))(Y)$.
	\end{lemma}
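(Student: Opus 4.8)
The plan is to establish the two inclusions $\mathcal T_c(P_1) \subseteq \mathcal O_c(T(P_2))(Y)$ and $\mathcal O_c(T(P_2))(Y) \subseteq \mathcal T_c(P_1)$ separately, throughout using the identification $\mathcal T_c(P_1) \subseteq \mathcal T_c(P_2)$ from Theorem~\ref{pullbackinj} (under which $V^{P_1}_{(x \colon x_1, \ldots, x_n)}$ corresponds to $V^{P_2}_{(x \colon x_1, \ldots, x_n)}$) together with the standing hypothesis that $P_1 \subseteq P_2$ is tight, i.e.\ that this identification realizes $\mathcal T_c(P_1)$ as an \emph{ideal} of $\mathcal T_c(P_2)$.

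For the forward inclusion, I would recall that every element of $\mathcal T_c(P_1)$ is a finite union of basic sets $V^{P_1}_{(x \colon x_1, \ldots, x_n)}$ with $x, x_1, \ldots, x_n \in P_1$, which under the identification becomes the finite union of the corresponding sets $V^{P_2}_{(x \colon x_1, \ldots, x_n)}$. Each such set is a compact open subset of $T(P_2)$ by Theorem~\ref{preservecover}(4), and it is contained in $V_x \subseteq Y$ since $x \in P_1$. Hence any element of $\mathcal T_c(P_1)$ is a compact open subset of $T(P_2)$ contained in $Y$, i.e.\ lies in $\mathcal O_c(T(P_2))(Y)$.

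For the reverse inclusion, let $W \in \mathcal O_c(T(P_2))$ with $W \subseteq Y = \bigcup_{x \in P_1} V_x$. Since $W$ is compact and the $V_x$ are open, I can extract a finite subcover, so $W \subseteq \bigcup_{i=1}^m V_{x_i}$ for some $x_1, \ldots, x_m \in P_1$. The set $Z := \bigvee_{i=1}^m V_{x_i} = \bigcup_{i=1}^m V_{x_i}$ is a finite join of elements $V_{x_i} \in \mathcal T_c(P_1)$, hence lies in $\mathcal T_c(P_1)$. Now $W$ itself is an element of $\mathcal T_c(P_2)$, and $W \subseteq Z$ forces $W = W \wedge Z = W \cap Z$; since $\mathcal T_c(P_1)$ is an ideal of $\mathcal T_c(P_2)$ by tightness, $W \cap Z \in \mathcal T_c(P_1)$, and therefore $W \in \mathcal T_c(P_1)$.

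I do not expect a serious obstacle here; the only points requiring care are (a) consistently reading $V_x$ on the $P_1$ side versus the $P_2$ side through the identification of Theorem~\ref{pullbackinj}, and (b) observing that the compactness step in the reverse inclusion is precisely where both the openness of $Y$ and the ideal property (tightness) get used — without tightness, $W \cap Z$ need not fall back into $\mathcal T_c(P_1)$, so the argument would break down.
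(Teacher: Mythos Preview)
Your proof is correct and follows essentially the same approach as the paper's. The reverse inclusion is virtually identical (compactness gives a finite subcover by $V_{x_i}$'s, then the ideal property from tightness pulls $W$ back into $\mathcal T_c(P_1)$); for the forward inclusion the paper argues slightly more directly by noting that elements of $\mathcal T_c(P_1)$, being preimages under $re$, automatically lie in the domain of $re$, which is $Y$ by Theorem~\ref{preservecover}(2), whereas you unpack this via the basis description --- but this is only a cosmetic difference.
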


	\begin{proof}		
		Noting that the identification $\mathcal T_c(P_1) \subseteq \mathcal T_c(P_2)$ is from the preimage of $re^{-1}$, we know that elements $U \in \mathcal T_c(P_1)$ are compact open subsets in the domain of $re$. But the domain of $re$ is exactly $Y$ by (2) in Theorem~\ref{preservecover}, so $U \in \mathcal O_c(T(P_2))(Y)$.
		
		Now let $U \in \mathcal O_c(T(P_2))(Y)$ be a compact open set contained in $Y$. Note that because $U$ is compact and $Y = \bigcup_{x \in P_1} V_{x}$, there is a finite set $\{x_1, \ldots, x_n\} \in P_1$ such that $U \subseteq \bigcup_{i=1}^n V_{x_i}$. Because $V_{x_i} \in \mathcal T_c(P_1)$, we have that $U$ is less than some element in $\mathcal T_c(P_1)$. Thus, as $\mathcal T_c(P_1) \subseteq \mathcal T_c(P_2)$ is an ideal, we have that $U \in \mathcal T_c(P_1)$, so we are done.
	\end{proof}
	
	\begin{corollary} \label{intersectdown}
		Let $P_1$ and $P_2$ be closed downwards subsets of $P$. Then $P_1 \cap P_2 \subseteq P$ is also closed downwards and $\mathcal T_c(P_1) \cap \mathcal T_c(P_2) = \mathcal T_c(P_1 \cap P_2)$ if we view all sets as ideals of $\mathcal T_c(P)$.
	\end{corollary}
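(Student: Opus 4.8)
The plan is to reduce everything to the concrete description of these ideals as collections of compact open subsets of $T(P)$ supplied by Lemma~\ref{openident}. First I would dispatch the easy structural claim: if $x \in P_1 \cap P_2$ and $y \leq x$ in $P$, then $y \in P_1$ and $y \in P_2$ since each $P_i$ is closed downwards, hence $y \in P_1 \cap P_2$; together with $0 \in P_1 \cap P_2$ and closure under the meet of $P$, this shows $P_1 \cap P_2 \subseteq P$ is a meet subsemilattice closed downwards. Consequently Corollary~\ref{downwardideal} applies to each of $P_1$, $P_2$, and $P_1 \cap P_2$, so that $\mathcal T_c(P_1)$, $\mathcal T_c(P_2)$, and $\mathcal T_c(P_1 \cap P_2)$ are all well-defined ideals of $\mathcal T_c(P)$, each realized via the identification $re^{-1}$ of Theorem~\ref{pullbackinj}.

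Next I would invoke Lemma~\ref{openident} (applicable since closed-downward subsets preserve finite covers and are tight by Corollary~\ref{downwardideal}) to identify, inside the generalized Boolean algebra $\mathcal T_c(P) = \mathcal O_c(T(P))$,
\[
\mathcal T_c(P_i) = \mathcal O_c(T(P))(Y_i), \qquad Y_i := \bigcup_{x \in P_i} V_x \subseteq T(P),
\]
and likewise $\mathcal T_c(P_1 \cap P_2) = \mathcal O_c(T(P))(Y_{12})$ with $Y_{12} := \bigcup_{x \in P_1 \cap P_2} V_x$. Since the intersection of two ideals of $\mathcal T_c(P)$ is just their intersection as sets of compact open subsets, and since a compact open $W$ lies in both $\mathcal O_c(T(P))(Y_1)$ and $\mathcal O_c(T(P))(Y_2)$ exactly when $W \subseteq Y_1 \cap Y_2$, the whole statement reduces to the single identity of open sets $Y_1 \cap Y_2 = Y_{12}$.

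Finally I would prove that identity. The inclusion $Y_{12} \subseteq Y_1 \cap Y_2$ is immediate, since $x \in P_1 \cap P_2$ gives $V_x \subseteq Y_1$ and $V_x \subseteq Y_2$. For the reverse inclusion, take $\xi \in Y_1 \cap Y_2$, so there exist $a \in \xi \cap P_1$ and $b \in \xi \cap P_2$. Because $\xi$ is a filter, $a \wedge b \in \xi$, whence $a \wedge b \neq 0$ (no filter contains $0$); and because $a \wedge b \leq a \in P_1$ and $a \wedge b \leq b \in P_2$ with both $P_i$ closed downwards, $a \wedge b \in P_1 \cap P_2$. Thus $\xi \in V_{a \wedge b} \subseteq Y_{12}$, giving $Y_1 \cap Y_2 = Y_{12}$, and the corollary follows. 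This filter-meet step is the only real content of the argument; the main point requiring care is simply bookkeeping, namely that all three identifications from Lemma~\ref{openident} are taken inside the one ambient algebra $\mathcal T_c(P) = \mathcal O_c(T(P))$, so that set-theoretic intersection of ideals on the algebra side corresponds to intersection of the open sets $Y_i$ on the space side.
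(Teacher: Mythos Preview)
Your proof is correct and follows essentially the same approach as the paper: both reduce, via Lemma~\ref{openident}, to showing the equality of open sets $\bigl(\bigcup_{x\in P_1} V_x\bigr)\cap\bigl(\bigcup_{y\in P_2} V_y\bigr)=\bigcup_{z\in P_1\cap P_2} V_z$, and both use that a filter containing $a\in P_1$ and $b\in P_2$ contains $a\wedge b\in P_1\cap P_2$ (the paper phrases this as $V_x\cap V_y=V_{x\wedge y}$). Your write-up is slightly more explicit about why the intersection of the ideals $\mathcal O_c(T(P))(Y_i)$ equals $\mathcal O_c(T(P))(Y_1\cap Y_2)$, but the content is the same.
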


	\begin{proof}
		It's easy to see that $P_1 \cap P_2$ is closed downwards if both $P_1$ and $P_2$ are. All sets then preserve finite covers and are tight with respect to $P$ by Corollary~\ref{downwardideal}.
		
		Applying Lemma~\ref{openident}, it suffices to show that \[\left(\bigcup_{x \in P_1} V_{x} \right)\cap \left(\bigcup_{y \in P_2} V_{y}\right) =  \bigcup_{z \in P_1 \cap P_2} V_{z}\]
		
		But this is easy noting that $V_{x} \cap V_{y} = V_{x \wedge y}$ and that $x \wedge y \leq x, y$, and so $x \wedge y \in P_1 \cap P_2$ because the $P_1$ and $P_2$ are closed downwards, so we are done.
	\end{proof}
	
	\section{Partial Actions from Strongly $E^{\ast}$-Unitary Inverse Semigroups} \label{inversegroupaction}
	
	In the $C^{\ast}$-algebra case, one can associate to a strongly $E^{\ast}$-unitary inverse semigroup a partial crossed product that is realized from an action on the tight representations \cite{Milan2008Semigroup, Milan2011ONIS}. This has been made explicit for inverse semigroups arising from combinatorial objects in \cite{Castro2022CalgebrasOG, deCastro2020}. This approach was also shown to work for combinatorial $R$-algebras in \cite{Boava2021LeavittPA}.
	
	Using well-known a partial action on the idempotents $E$ of a strongly $E^{\ast}$-unitary inverse semigroup $S$ with pure grading $\varphi: S^{\times} \rightarrow G$ described in \cite{Milan2008Semigroup}, we build a partial action $(\mathcal B, \Phi)$ of $G$ on a generalized Boolean algebra that is analogous to the topological partial action in \cite{Milan2011ONIS}. By the contents of Section~\ref{booleanaction}, this associates to $(S, \varphi)$ an $R$-algebra $L_R(S, \varphi) \coloneqq \mathrm{Lc}(R,  \mathcal B) \rtimes_{\Phi} G$. 
	
	We also analyze when inverse subsemigroups $S_1 \subseteq S_2$ lead to partial subactions on generalized Boolean algebras and show, as an application, a natural way to unitize the $R$-algebra of a strongly $E^{\ast}$-unitary inverse semigroup.
	
	\subsection{Basic Definitions}
	
	An overview of inverse semigroup theory can be found in the textbook by Lawson \cite{Lawson1998-ez}. We repeat the required definitions here for convenience.
	
	\begin{definition}
		An \textit{inverse semigroup} is a set $S$ equipped with an associative binary operation where every element $s \in S$ has a unique ``inverse'' $s^{\ast}$ such that $s s^{\ast} s = s$ and $s^{\ast}ss^{\ast} = s^{\ast}$. An inverse semigroup $S$ has a $0$ if there exists $0 \in S$ such that $0 \cdot s = 0 = s \cdot 0$ for all $s \in S$.
	\end{definition}

	We will assume from now on that all our inverse semigroups have a $0$. Throughout, let $S$ be an inverse semigroup and let $E$ be its set of idempotents. The following lemma is standard for inverse semigroups and we will use it frequently without mention.
	
	\begin{lemma}
		The elements of $E$ commute with each other. Furthermore, $E$ is naturally a meet semilattice with $e_1 \wedge e_2 = e_1e_2$. This extends to a partial order on $S$ by defining for $x, y \in S$ the order $x \leq y \Leftrightarrow x = ey$ for some $e \in E$.
	\end{lemma}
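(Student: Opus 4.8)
The statement is classical (see \cite{Lawson1998-ez}), so the plan is to recall the short self-contained argument; the genuine content is the commutativity of idempotents, and everything after it is routine bookkeeping. First I would record the elementary observation that every idempotent is self-inverse: if $e = e^2$ then $e\cdot e\cdot e = e$ exhibits $e$ as an inverse of $e$, so uniqueness of inverses forces $e^{\ast} = e$.

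The heart of the matter: given $e,f \in E$, set $a = ef$. I would verify directly that $f a^{\ast} e$ is also an inverse of $a$, i.e. $a(fa^{\ast}e)a = a$ and $(fa^{\ast}e)a(fa^{\ast}e) = fa^{\ast}e$; cancelling $e^2 = e$ and $f^2 = f$, the first reduces to $(ef)a^{\ast}(ef) = aa^{\ast}a = a$ and the second to $f(a^{\ast}aa^{\ast})e = fa^{\ast}e$, both of which hold. Uniqueness of inverses then gives $a^{\ast} = fa^{\ast}e$. Substituting this back in, $a^{\ast}a^{\ast} = (fa^{\ast}e)(fa^{\ast}e) = fa^{\ast}(ef)a^{\ast}e = f(a^{\ast}aa^{\ast})e = fa^{\ast}e = a^{\ast}$, so $(ef)^{\ast}$ is idempotent; applying self-inverseness to it and using that $\ast$ is an involution, $ef = (a^{\ast})^{\ast} = a^{\ast}$ is itself idempotent. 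Finally, since $ef$ and $fe$ are now both idempotent, a one-line computation shows $fe$ is an inverse of $ef$ — e.g. $(ef)(fe)(ef) = efef = ef$, using idempotency of $ef$, and symmetrically $(fe)(ef)(fe) = fe$ — so $(ef)^{\ast} = fe$ by uniqueness; but $(ef)^{\ast} = ef$ since $ef$ is idempotent, whence $ef = fe$.

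With commutativity established, $E$ is a commutative band, hence a meet semilattice: associativity is inherited from $S$, and for the order $e \leq f \Leftrightarrow ef = e$ one checks $ef = e \wedge f$ since $ef \leq e$, $ef \leq f$, and $g \leq e$, $g \leq f$ forces $g(ef) = (ge)f = gf = g$; the element $0$, being idempotent with $0e = 0$, is the minimum. For the extension of $\leq$ to $S$ via $x \leq y \Leftrightarrow x = ey$ for some $e \in E$: reflexivity follows from $x = (xx^{\ast})x$ with $xx^{\ast} \in E$; transitivity from $x = ey$, $y = fz$ giving $x = (ef)z$ with $ef \in E$; and antisymmetry is the one point needing a little care, which I would handle through the standard equivalent description $x \leq y \Leftrightarrow x = yx^{\ast}x$ (or simply cite \cite{Lawson1998-ez}). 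I do not expect any real obstacle: the only step that requires attention is the chain of identities yielding $a^{\ast} = fa^{\ast}e$ and the idempotency of $ef$; the remainder is formal.
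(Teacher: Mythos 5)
Your argument is correct: the computation showing $(ef)^{\ast}=fa^{\ast}e$-type identities force $ef$ to be idempotent and then $ef=fe$ is exactly the classical Lawson/Howie proof, and the remaining semilattice and order verifications are handled soundly (with antisymmetry of the natural partial order reasonably deferred to the standard equivalent form $x\leq y \Leftrightarrow x = yx^{\ast}x$ or to \cite{Lawson1998-ez}). The paper itself gives no proof of this lemma, stating it as standard and pointing to Lawson's textbook, so your write-up is if anything more detailed than what the paper requires.
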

	
	We now define strongly $E^{\ast}$-unitary inverse semigroups. We denote $S^{\times} \coloneqq S \setminus \{0\}$ and $E^{\times} \coloneqq E \setminus \{0\}$.
	
	\begin{definition}[{\cite[Section~5]{Milan2011ONIS}}] \label{strongunit}
		$S$ is \textit{strongly $E^{\ast}$-unitary} if there exists a group $G$ and a map $\varphi: S^{\times} \rightarrow G$ such that the following holds:
		
		\begin{enumerate}
			\item $\varphi(ab) = \varphi(a)\varphi(b)$ for all $a, b \in S$ such that $ab \neq 0$
			\item $\varphi^{-1}(e) = E^{\times}$
		\end{enumerate}
		
		We call any $\varphi: S^{\times} \rightarrow G$ that satisfies these properties a \textit{(idempotent) pure grading} of $G$ on $S$. Often we refer to the pair $(S, \varphi)$ as a strongly $E^{\ast}$-unitary inverse semigroup.
	\end{definition}
	
	\begin{remark}
		Strongly $E^{\ast}$-unitary inverse semigroups are also called strongly $0\text{-}E$-unitary inverse semigroups.
	\end{remark}
	
	Finally, we mention a well-known theorem that there exists a ``universal group'' for any strongly $E^{\ast}$-unitary inverse semigroup $S$ in the following sense.
	
	\begin{theorem} \label{universalgroup}
		If $S$ is strongly $E^{\ast}$-unitary then there exists a ``universal group'' $G(S)$ and a pure grading $\varphi: S^{\times} \rightarrow G(S)$ such that, for any group $G$ with a pure grading $\sigma: S^{\times} \rightarrow G$, there is a unique group homomorphism $f: G(S) \rightarrow G$ such that $\sigma = f \circ \varphi$.
	\end{theorem}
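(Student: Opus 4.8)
The plan is to realize $G(S)$ concretely by a group presentation on the generating set $S^{\times}$, and then to extract purity of the tautological grading from the \emph{assumption} that some pure grading already exists. Let $F$ be the free group on the set $S^{\times}$, write $\overline{s} \in F$ for the generator corresponding to $s \in S^{\times}$, and let $N \trianglelefteq F$ be the normal subgroup generated by all words $\overline{ab}\,\overline{b}^{-1}\overline{a}^{-1}$ with $a,b \in S^{\times}$ and $ab \neq 0$, together with all generators $\overline{p}$ with $p \in E^{\times}$. Set $G(S) = F/N$ and let $\varphi \colon S^{\times} \to G(S)$ send $s$ to the class of $\overline{s}$. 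By construction $\varphi(ab) = \varphi(a)\varphi(b)$ whenever $ab \neq 0$, $\varphi(p) = e$ for every $p \in E^{\times}$, and $\varphi(S^{\times})$ generates $G(S)$.

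First I would verify the universal property. Given a group $G$ with a pure grading $\sigma \colon S^{\times} \to G$, freeness of $F$ extends $\sigma$ to a homomorphism $\widetilde{\sigma} \colon F \to G$. It annihilates each relator $\overline{ab}\,\overline{b}^{-1}\overline{a}^{-1}$ because $\sigma(ab) = \sigma(a)\sigma(b)$ for $ab \neq 0$ (condition (1) of Definition~\ref{strongunit}), and it annihilates each generator $\overline{p}$ with $p \in E^{\times}$ because $\sigma(p) = e$ (condition (2)). Hence $\widetilde{\sigma}$ descends to a homomorphism $f \colon G(S) \to G$ with $f \circ \varphi = \sigma$, and $f$ is unique since its values are forced on the generating set $\varphi(S^{\times})$.

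It then remains to check that $\varphi$ is itself a pure grading. Condition (1) of Definition~\ref{strongunit} holds by construction, and $E^{\times} \subseteq \varphi^{-1}(e)$ was noted above. For the reverse inclusion I would use the hypothesis: since $S$ is strongly $E^{\ast}$-unitary there exist \emph{some} group $H$ and pure grading $\sigma \colon S^{\times} \to H$, and the homomorphism $f \colon G(S) \to H$ supplied by the universal property satisfies $\sigma = f \circ \varphi$; thus $\varphi(s) = e$ forces $\sigma(s) = f(e) = e$, whence $s \in \sigma^{-1}(e) = E^{\times}$. Therefore $\varphi^{-1}(e) = E^{\times}$, so $\varphi$ is a pure grading and $(G(S), \varphi)$ is the required universal object.

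The argument is essentially formal, so the content sits in two places. The minor one is the bookkeeping in the universal-property step: one genuinely needs condition (2), not just (1), of Definition~\ref{strongunit} in order to see that $\widetilde{\sigma}$ kills the idempotent generators $\overline{p}$. The real point — and the only place the hypothesis is used — is the last step: a priori the quotient $G(S)$ could collapse non-idempotent elements of $S^{\times}$ to $e$, and it is precisely the presence of one genuine pure grading that excludes this. One could instead phrase the construction as ``the maximal group image of $S$ with the idempotents forced into the kernel,'' but the free presentation keeps the universal property transparent.
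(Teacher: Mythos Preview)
Your proof is correct and complete; the paper itself does not prove this but simply defers to \cite[Section~2.2]{KELLENDONK2004462}, and the presentation of $G(S)$ as a quotient of the free group on $S^{\times}$ that you write down is exactly the standard construction one finds there.

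One minor correction to your closing commentary: you do \emph{not} genuinely need condition (2) of Definition~\ref{strongunit} to see that $\widetilde{\sigma}$ kills the generators $\overline{p}$, and in fact that second family of relators is redundant. For $p \in E^{\times}$ one has $p \cdot p = p \neq 0$, so condition (1) alone gives $\sigma(p) = \sigma(p)^2$ and hence $\sigma(p) = e$; equivalently, taking $a = b = p$ in your first family of relators already forces $\overline{p}^{-1} \in N$. Condition (2) enters only at the very end, where you invoke the reverse inclusion $\sigma^{-1}(e) \subseteq E^{\times}$ for the auxiliary grading $\sigma$. This does not affect the validity of the argument.
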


	\begin{proof}
		This is well-known and a construction of the universal group for a given $S$ can be found in \cite[Section~2.2]{KELLENDONK2004462}.
	\end{proof}
	
	\subsection{Building a Partial Action}
	Throughout, let $(S, \varphi)$ be a strongly $E^{\ast}$-unitary inverse semigroup with a pure grading $\varphi: S^{\times} \rightarrow G$ and let $E$ denote its idempotents. Let $e = 1_G \in G$ be the unit.
	
	For $g \in G$, if $\varphi^{-1}(g) \neq \emptyset$ define \[E_{g} = \{x \in E \colon x \leq s s^{\ast} \text{ for some } s \in S \text{ with } \varphi(s) = g\}\] If $\varphi^{-1}(g)$ is empty, then define $E_{g} = \{0\}$. Note that $E_e = E$ and $E_g \subseteq E$ is closed downwards.
	
	Our next theorems will define a well-known partial action on $S$ \cite{ Kellendonk2004PartialAO, Milan2011ONIS} by $G$. Because of its lengthy calculation, we omit the proof here.
	
	\begin{theorem} \label{idempotentaction}
		There are well-defined isomorphisms of meet semilattices $\phi_g: E_{g^{-1}} \rightarrow E_g$ that take $x \mapsto sxs^{\ast}$ where $s \in S$ is any element for a given $x$ such that $x \leq s^{\ast}s$ and $\varphi(s) = g$. These meet semilattices and morphisms form a partial action on a set in the sense that the following properties hold:
		\begin{enumerate}
			\item $E_e = E$ and $\phi_e = \mathrm{Id}_E$
			\item $\phi_s(E_{s^{-1}} \cap E_t) = E_s \cap E_{st}$ for all $s, t \in G$
			\item $\phi_s(\phi_t(x)) = \phi_{st}(x)$ for $x \in E_{s^{-1}} \cap E_{(st)^{-1}}$ for all $s, t \in G$
		\end{enumerate}
	\end{theorem}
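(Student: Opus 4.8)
The plan is to deduce everything from the two defining properties of a pure grading, the single delicate point being that the formula $x \mapsto sxs^{\ast}$ does not depend on the choice of $s$. Throughout I write $g,h,\dots$ for elements of $G$ and reserve $s,t,u,v,a,c$ for elements of $S$.

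\emph{Existence and well-definedness of $\phi_g$.} First I would note that each $x \in E_{g^{-1}}$ admits an admissible $s$: if $x \leq tt^{\ast}$ with $\varphi(t) = g^{-1}$, then $s := t^{\ast}$ satisfies $\varphi(s) = g$ (from $\varphi^{-1}(e) = E^{\times}$ applied to $t^{\ast}t \neq 0$ one gets $\varphi(t^{\ast}) = \varphi(t)^{-1}$) and $x \leq s^{\ast}s = tt^{\ast}$. For independence of the choice I would prove the sharper statement: if $\varphi(s_1) = \varphi(s_2) = g$ and $x \leq s_1^{\ast}s_1$, $x \leq s_2^{\ast}s_2$ with $x \neq 0$, then $s_1 x = s_2 x$ (hence $s_1 x s_1^{\ast} = (s_1x)(s_1x)^{\ast} = (s_2x)(s_2x)^{\ast} = s_2 x s_2^{\ast}$). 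Put $y = s_1 x$, $z = s_2 x$. One checks $y^{\ast}y = z^{\ast}z = x \neq 0$, so $y, z \neq 0$ and $\varphi(y) = \varphi(z) = g$; and $h := yz^{\ast} = s_1 x s_2^{\ast}$ is nonzero (right-multiplying a hypothetical $h=0$ by $s_2$ and using $xs_2^{\ast}s_2 = x$ would force $s_1x = 0$), so $\varphi(h) = gg^{-1} = e$ places $h$ in $E^{\times}$; thus $h$ is idempotent and $h = h^{\ast} = zy^{\ast}$. Using $y^{\ast}y = z^{\ast}z = x$ and $yx = y$, $zx = z$ one computes $hy = zy^{\ast}y = z$ and $hz = yz^{\ast}z = y$, and then idempotency $h = h^2$ gives $z = hy = h^2y = h(hy) = hz = y$. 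This is precisely where strong $E^{\ast}$-unitarity enters, and I expect the main (if routine) nuisance throughout to be checking that each product one forms is nonzero, since $\varphi$ is only a partial function on $S$.

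\emph{$\phi_g$ is a meet-semilattice isomorphism.} From $\phi_g(x) = sxs^{\ast} \leq ss^{\ast}$ with $\varphi(s) = g$ I get $\phi_g(x) \in E_g$, and clearly $\phi_g(0) = 0$. Taking $t = s^{\ast}$ and using $x \leq s^{\ast}s$, a direct computation gives $\phi_{g^{-1}}(\phi_g(x)) = s^{\ast}(sxs^{\ast})s = (s^{\ast}s)x(s^{\ast}s) = x$, and symmetrically $\phi_g \circ \phi_{g^{-1}} = \mathrm{id}$; so $\phi_g$ is a bijection with inverse $\phi_{g^{-1}}$ (in particular surjective onto $E_g$, which also follows from $\phi_g(s^{\ast}ys) = y$ for $y \leq ss^{\ast}$). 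If $x \leq x'$ in $E_{g^{-1}}$ and $s$ is admissible for $x'$, then $s$ is admissible for $x$ and $\phi_g(x) = sxs^{\ast} \leq sx's^{\ast} = \phi_g(x')$; hence $\phi_g$ and $\phi_{g^{-1}}$ are order preserving, so $\phi_g$ is an order isomorphism and therefore preserves greatest lower bounds, i.e. meets.

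\emph{The three axioms.} For (1), taking $s = x$ shows $x \in E_e$ and $\phi_e(x) = xxx = x$, so $E_e = E$ and $\phi_e = \mathrm{Id}$. For (2), given $x \in E_{g^{-1}} \cap E_h$ I choose $a$ with $\varphi(a) = g$, $x \leq a^{\ast}a$ and $c$ with $\varphi(c) = h$, $x \leq cc^{\ast}$; then $u := axc$ has $u^{\ast}u = c^{\ast}xc \neq 0$ (so $u \neq 0$), $\varphi(u) = gh$, and $uu^{\ast} = ax(cc^{\ast})xa^{\ast} = axa^{\ast} = \phi_g(x)$, so $\phi_g(x) \in E_{gh}$; combined with $\phi_g(x)\in E_g$ this gives $\phi_g(E_{g^{-1}}\cap E_h) \subseteq E_g \cap E_{gh}$, and applying this inclusion with $(g,h)$ replaced by $(g^{-1},gh)$ and then $\phi_g$ (a bijection) yields the reverse inclusion. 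The degenerate cases are handled separately: if $\varphi^{-1}(g) = \emptyset$ then $\varphi^{-1}(g^{-1}) = \emptyset$ too, so $E_g = E_{g^{-1}} = \{0\}$ and both sides are $\{0\}$; if $\varphi^{-1}(h) = \emptyset$ one shows directly that no nonzero idempotent lies below both $ww^{\ast}$ (with $\varphi(w) = g$) and $pp^{\ast}$ (with $\varphi(p) = gh$), since such an idempotent would produce a nonzero element of $\varphi^{-1}(h)$. Finally for (3): on the set where both composites are defined — which by (2) is exactly $E_{h^{-1}} \cap E_{(gh)^{-1}} = \{x \in E_{h^{-1}} : \phi_h(x) \in E_{g^{-1}}\}$ — pick $v$ with $\varphi(v) = h$, $x \leq v^{\ast}v$ and $u$ with $\varphi(u) = g$, $\phi_h(x) = vxv^{\ast} \leq u^{\ast}u$; then $uv \neq 0$ with $x \leq (uv)^{\ast}(uv)$ and $\varphi(uv) = gh$, so $uv$ is admissible for $\phi_{gh}$ and $\phi_{gh}(x) = (uv)x(uv)^{\ast} = u(vxv^{\ast})u^{\ast} = \phi_g(\phi_h(x))$, as required.
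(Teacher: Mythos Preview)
Your proposal is correct and in fact more complete than what the paper supplies: the paper explicitly omits the proof of this theorem as a ``lengthy calculation,'' referring to \cite{Kellendonk2004PartialAO, Milan2011ONIS}. There is a partial proof hidden in the \LaTeX\ source between \verb|\iffalse| and \verb|\fi|, and it is worth comparing your argument to that sketch.

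For well-definedness, the paper's hidden sketch uses the chain
\[
sxs^{\ast} = sxt^{\ast}ts^{\ast} = sxt^{\ast}st^{\ast} = txs^{\ast}st^{\ast} = txt^{\ast},
\]
each step justified by observing that products like $st^{\ast}$ are idempotent (hence self-adjoint) whenever nonzero. Your route---showing the stronger fact $s_1x = s_2x$ by analysing the idempotent $h = s_1xs_2^{\ast}$ and using $h^2 = h$---is a genuine variant: it trades a four-step rewriting for a short structural argument, and it yields the pleasant byproduct $s_1x = s_2x$ rather than just equality of the conjugates. Both approaches hinge on exactly the same use of the pure grading (forcing $st^{\ast}$, respectively $h$, into $E$), so neither is deeper than the other.

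For the meet-semilattice morphism property, the sketch computes $\phi_g(a)\phi_g(b) = \phi_g(ab)$ directly; your argument via order-preservation of $\phi_g$ and $\phi_{g^{-1}}$ is cleaner and avoids another round of element chasing. The hidden sketch does not treat axioms~(2) and~(3) at all, whereas you give full arguments, including the degenerate cases where some $\varphi^{-1}(\cdot)$ is empty. Your proof of~(3) via the composite witness $uv$ is the standard one and matches what appears (for the dual action) later in the paper's commented-out material.
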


	Because $E_g \subseteq E$ is closed downwards, we can apply Corollary~\ref{downwardideal} to view $\mathcal T_c(E_g) \subseteq \mathcal T_c(E)$ as an ideal. Furthermore, $\phi_g: E_{g^{-1}} \rightarrow E_g$ is an isomorphism of meet semilattices, so there is an natural induced isomorphism between the tight filters $\tilde \phi_g: T(E_{g}) \rightarrow T(E_{g^{-1}})$ that takes $\xi \mapsto \phi^{-1}_g(\xi)$. This then induces an isomorphism between the compact open sets \[\tilde{\phi}_g^{-1}: \mathcal T_c(E_{g^{-1}}) \rightarrow \mathcal T_c(E_{g^{-1}})\] that takes \[V^{E_{g^{-1}}}_{(x \colon x_1, \ldots, x_n)} \mapsto V^{E_g}_{(\phi_g(x) \colon \phi(x_1), \ldots, \phi_g(x_n))}\]
	
	\begin{remark}
		Because $\phi_g$ is an isomorphism, there is no inherent need to take the inverse anywhere. However, we do so to make some diagrams go the ``correct'' direction in some later proofs.
	\end{remark}
	
	Using these constructions, we will construct a partial action on a generalized Boolean algebra associated to a strongly $E^{\ast}$-unitary inverse semigroup $S$. The construction is exactly the same as applying the construction described in Example~\ref{example:stonedualaction} to the topological partial action described in \cite{Milan2011ONIS}. However, we recreate the proof here in the language of generalized Boolean algebras.

	\begin{theorem} \label{partialconstruct} Let $E_g$ and $\tilde \phi_g^{-1}$ be defined as above. The following defines a partial action of $G$ on the generalized Boolean algebra $\mathcal T_c(E)$.
		
	\[(\{\mathcal T_c(E_g)\}_{g \in G}, \{\tilde \phi_g^{-1}\}_{g \in G})\]
		
	\end{theorem}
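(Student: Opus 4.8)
The plan is to verify the three conditions of Definition~\ref{booldef} for the pair $(\{\mathcal T_c(E_g)\}_{g\in G},\{\tilde\phi_g^{-1}\}_{g\in G})$ by transporting the set-level partial action of Theorem~\ref{idempotentaction} through the assignment $P\mapsto\mathcal T_c(P)$, using the compatibility results of Section~\ref{meetlatticetight} to do the bookkeeping. First I would record the structural preliminaries. Each $E_g\subseteq E$ is closed downwards, so Corollary~\ref{downwardideal} makes $\mathcal T_c(E_g)\subseteq\mathcal T_c(E)$ a well-defined ideal, under the identification $V^{E_g}_{(x\colon x_1,\dots,x_n)}\leftrightarrow V^{E}_{(x\colon x_1,\dots,x_n)}$. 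Since $\phi_g\colon E_{g^{-1}}\to E_g$ is an isomorphism of meet semilattices, the discussion preceding the theorem gives that $\tilde\phi_g^{-1}\colon\mathcal T_c(E_{g^{-1}})\to\mathcal T_c(E_g)$ is an isomorphism of generalized Boolean algebras, determined on the generating family of Lemma~\ref{covgen} by $V_x\mapsto V_{\phi_g(x)}$. Condition (1) is then immediate: $E_e=E$ gives $\mathcal T_c(E_e)=\mathcal T_c(E)$, and $\phi_e=\mathrm{Id}_E$ (Theorem~\ref{idempotentaction}(1)) makes $\tilde\phi_e^{-1}$ fix every $V_x$, hence equal the identity.

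The core of the argument is condition (2). Fix $s,t\in G$. Both $E_{s^{-1}}\cap E_t$ and $E_s\cap E_{st}$ are closed downwards in $E$, so Corollary~\ref{intersectdown} identifies, inside $\mathcal T_c(E)$,
\[
\mathcal T_c(E_{s^{-1}})\cap\mathcal T_c(E_t)=\mathcal T_c(E_{s^{-1}}\cap E_t),\qquad \mathcal T_c(E_s)\cap\mathcal T_c(E_{st})=\mathcal T_c(E_s\cap E_{st}).
\]
By Theorem~\ref{idempotentaction}(2), $\phi_s$ restricts to an isomorphism of $E_{s^{-1}}\cap E_t$ onto $E_s\cap E_{st}$. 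Now $\mathcal T_c(E_{s^{-1}}\cap E_t)$ is generated, as a generalized Boolean algebra, by $\{V_x\}_{x\in E_{s^{-1}}\cap E_t}$ (Lemma~\ref{covgen}), and under the identifications above $\tilde\phi_s^{-1}$ sends this family bijectively onto $\{V_y\}_{y\in E_s\cap E_{st}}$, which generates $\mathcal T_c(E_s\cap E_{st})$. Since $\tilde\phi_s^{-1}$ is a generalized Boolean algebra isomorphism, it carries the subalgebra generated by the first family onto the one generated by the second; together with the displayed identifications this is precisely condition (2).

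Condition (3) is handled the same way. By Corollary~\ref{intersectdown} the elements $U$ for which $\tilde\phi_t^{-1}(U)$, $\tilde\phi_s^{-1}\tilde\phi_t^{-1}(U)$ and $\tilde\phi_{st}^{-1}(U)$ are all defined form the ideal $\mathcal T_c$ of a suitable closed-downward intersection of the $E_h$'s, generated by Lemma~\ref{covgen} by sets $V_{(x\colon x_1,\dots,x_n)}$ whose entries lie in that intersection; on such a generator each of the three maps acts entrywise by the corresponding $\phi$, and Theorem~\ref{idempotentaction}(3) gives $\phi_s(\phi_t(y))=\phi_{st}(y)$ for every entry $y$, so $\tilde\phi_s^{-1}\tilde\phi_t^{-1}(V_{(x\colon x_1,\dots,x_n)})=\tilde\phi_{st}^{-1}(V_{(x\colon x_1,\dots,x_n)})$. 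Because all three maps are generalized Boolean algebra morphisms, this equality propagates to the whole ideal, completing the verification. I do not expect any genuine obstacle beyond this bookkeeping: the point is to reduce every claim to the generating family $\{V_x\}$ of Lemma~\ref{covgen}, on which every map is given by the single rule $V_x\mapsto V_{\phi_g(x)}$, so that all the compatibilities between the nested identifications $\mathcal T_c(E_g\cap E_h)=\mathcal T_c(E_g)\cap\mathcal T_c(E_h)\subseteq\mathcal T_c(E)$ and the restrictions of the $\phi_g$ collapse to the set-level identities of Theorem~\ref{idempotentaction}.
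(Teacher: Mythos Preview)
Your proposal is correct and follows essentially the same route as the paper's proof: you use Corollary~\ref{downwardideal} to view each $\mathcal T_c(E_g)$ as an ideal, dispatch condition~(1) directly from $E_e=E$ and $\phi_e=\mathrm{Id}_E$, invoke Corollary~\ref{intersectdown} to identify $\mathcal T_c(E_g)\cap\mathcal T_c(E_h)$ with $\mathcal T_c(E_g\cap E_h)$, and then reduce conditions~(2) and~(3) to the set-level identities of Theorem~\ref{idempotentaction} by checking on the generating family $\{V_x\}$ from Lemma~\ref{covgen}. The paper's proof is terser but takes exactly this path; your version simply spells out the bookkeeping more carefully.
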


	\begin{proof}
		By the statements preceding the theorem, we can view $\mathcal T_c(E_g)$ as an ideal in $\mathcal T_c(E_g)$. We check the conditions in Definition~\ref{booldef}.
		
		Note that we already have that $\mathcal T_c(E_e) = \mathcal T_c(E)$ from $E_e = E$ and $\tilde \phi_e^{-1}$ is the identity from the fact that $\phi_e$ is the identity so (1) in Definition~\ref{booldef} is satisfied.

		For any $E' \subseteq E$ that is downward closed,  $\mathcal T_c(E')$ is generated by $\{V_x\}_{x \in E'}$ so any generalized Boolean algebra morphisms can be checked on the level of individual elements in $E'$. By Lemma~\ref{openident} and Corollary~\ref{intersectdown}, we have that \[\mathcal T_c(E_s \cap E_t) = \mathcal T_c(E_s) \cap \mathcal T_c(E_t)\] when we view them as elements in $\mathcal T_c(E)$. These two statements reduce the checks of (2) and (3) in Definition~\ref{booldef} to

		\[\phi_s(E_{s^{-1}} \cap E_t) = E_s \cap E_{st}\]
		\[\phi_s(\phi_t(x)) = \phi_{st}(x) \text{ for } x \in E_{t^{-1}} \cap E_{(st)^{-1}}\] which are exactly the properties (2) and (3) of Theorem~\ref{idempotentaction}.

	\end{proof}

	Using the results of Section~\ref{booleanaction} and the above Theorem~\ref{partialconstruct}, we have the following definition.
	
	\begin{definition}
		Let $(\mathcal B, \Phi)$ be the partial action of $G$ on the generalized Boolean algebra $\mathcal B = \mathcal T_c(E)$ induced by $(S, \varphi)$. Let $L_R(S, \varphi) \coloneqq \mathrm{Lc}(R, \mathcal B)\rtimes_{\Phi} G$.
	\end{definition} 

	Using the universal group, we now show that the pure grading $\varphi: S^{\times} \rightarrow G$ can be dropped from the notation. 
	
	\begin{theorem} \label{isoswitch}
		Let $S$ be a strongly $E^{\ast}$-unitary inverse semigroup graded by a group $H$ with a pure grading $\sigma: S^{\times} \rightarrow H$. Let $G(S)$ be the universal group of $S$ and let $\varphi: S^{\times} \rightarrow G(S)$ be the associated pure grading. Then $L_R(S, \sigma) \cong L_R(S, \varphi)$.
	\end{theorem}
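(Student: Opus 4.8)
The plan is to build an explicit isomorphism $\tilde f\colon L_R(S,\varphi)\to L_R(S,\sigma)$ by ``pushing the grading group forward'' along the canonical map. By Theorem~\ref{universalgroup} there is a unique group homomorphism $f\colon G(S)\to H$ with $\sigma=f\circ\varphi$. Both algebras are built on the \emph{same} generalized Boolean algebra $\mathcal T_c(E)$; only the ideals and the partial action change. Writing $E^\varphi_g=\{x\in E:x\le ss^\ast,\ \varphi(s)=g\}$ and $E^\sigma_h$ for its $\sigma$-analogue, the key elementary observation is $E^\varphi_g\subseteq E^\sigma_{f(g)}$, since $\varphi(s)=g$ forces $\sigma(s)=f(g)$; moreover $E^\varphi_g$ is downward closed in $E$, so by Corollary~\ref{downwardideal} and Lemma~\ref{openident} we get a chain of ideals $\mathcal T_c(E^\varphi_g)\subseteq\mathcal T_c(E^\sigma_{f(g)})\subseteq\mathcal T_c(E)$, and correspondingly $D^\varphi_g=\mathrm{Lc}(R,\mathcal T_c(E^\varphi_g))\subseteq D^\sigma_{f(g)}$ inside the ambient ring $M=\mathrm{Lc}(R,\mathcal T_c(E))$. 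I then define $\tilde f$ by $\sum_{g} a_g\delta_g\mapsto\sum_h\bigl(\sum_{g\in f^{-1}(h)}a_g\bigr)\delta_h$, which is visibly $R$-linear and additive and well-defined (finitely many nonzero terms, coefficients in the right ideals).

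To see $\tilde f$ is multiplicative I would first record that the two partial actions agree on common domains: for $x\in E^\varphi_{g^{-1}}$ one has $\phi^\varphi_g(x)=uxu^\ast=\phi^\sigma_{f(g)}(x)$, where $u\in S$ is any element with $\varphi(u)=g$ and $x\le u^\ast u$ (such $u$ also has $\sigma(u)=f(g)$). Passing to tight filters and compact open sets, the Boolean-algebra isomorphism of $\Phi^\varphi$ in degree $g$ is the restriction to $\mathcal T_c(E^\varphi_{g^{-1}})$ of that of $\Phi^\sigma$ in degree $f(g)$, hence the induced maps on $\mathrm{Lc}$ agree there. Feeding this into the rule $(a\delta_s)(b\delta_t)=\tilde\phi_s(\tilde\phi_{s^{-1}}(a)b)\delta_{st}$ and using $f(st)=f(s)f(t)$, a short computation inside $M$ gives $\tilde f((a\delta_s)(b\delta_t))=\tilde\phi^\sigma_{f(s)}(\tilde\phi^\sigma_{f(s)^{-1}}(a)b)\delta_{f(s)f(t)}=\tilde f(a\delta_s)\tilde f(b\delta_t)$, so $\tilde f$ is a homomorphism of $R$-algebras.

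For surjectivity, recall $L_R(S,\sigma)$ is $R$-spanned by the elements $U\delta_h$ with $U\in\mathcal T_c(E^\sigma_h)$ (Lemma~\ref{computationskew}(1)). Since $E^\sigma_h=\bigcup_{g\in f^{-1}(h)}E^\varphi_g$, Lemma~\ref{openident} identifies $\mathcal T_c(E^\sigma_h)$ with the compact open subsets of $\bigcup_{g\in f^{-1}(h)}\bigl(\bigcup_{x\in E^\varphi_g}V_x\bigr)$; by compactness of $U$ there are $x_1,\dots,x_n$ with $x_i\in E^\varphi_{g_i}$, $g_i\in f^{-1}(h)$ and $U\le\bigvee_i V_{x_i}$. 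Applying the inclusion--exclusion identity of Lemma~\ref{computationskew}(10) to $U=\bigvee_i(U\wedge V_{x_i})$ expresses $U\delta_h$ as a signed sum of terms $\bigl(\bigwedge_{j\in J}(U\wedge V_{x_j})\bigr)\delta_h$; each such meet lies below $V_{x_{j_0}}\in\mathcal T_c(E^\varphi_{g_{j_0}})$, hence in the ideal $\mathcal T_c(E^\varphi_{g_{j_0}})$, so each term is $\tilde f\bigl(\bigl(\bigwedge_{j\in J}(U\wedge V_{x_j})\bigr)\delta_{g_{j_0}}\bigr)$. Thus $U\delta_h\in\operatorname{im}\tilde f$.

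Injectivity is where the real content lies, and I expect it to be the main obstacle. Suppose $\tilde f(\sum_g a_g\delta_g)=0$; then for every $h$ we have $\sum_{g\in f^{-1}(h)}a_g=0$ in $M$, where (by the remark after Lemma~\ref{idealfunction}) $a_g$ is supported on the open set $Y^\varphi_g=\bigcup_{x\in E^\varphi_g}V_x$. The crux is the claim: \textbf{if $g\neq g'$ and $f(g)=f(g')$, then $E^\varphi_g\cap E^\varphi_{g'}=\{0\}$.} Granting it, each $x\wedge y$ ($x\in E^\varphi_g$, $y\in E^\varphi_{g'}$) lies in $E^\varphi_g\cap E^\varphi_{g'}=\{0\}$, so $Y^\varphi_g\cap Y^\varphi_{g'}=\emptyset$; the supports over $f^{-1}(h)$ being pairwise disjoint, $\sum_{g\in f^{-1}(h)}a_g=0$ forces every $a_g=0$, so $\tilde f$ is injective and the theorem follows. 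To prove the claim, take $0\neq x\in E^\varphi_g\cap E^\varphi_{g'}$ and pick $u,v\in S$ with $\varphi(u)=g$, $\varphi(v)=g'$, $x\le u^\ast u$, $x\le v^\ast v$; put $w=u^\ast xv$. Using $x\le uu^\ast$ and $x\le vv^\ast$ one checks $ww^\ast=u^\ast xu$ and $u(u^\ast xu)u^\ast=x\neq0$, so $w\neq0$, and multiplicativity of $\varphi$ on the (nonzero) products involved together with $\varphi(x)=e$ gives $\varphi(w)=g^{-1}g'$. Then $\sigma(w)=f(g)^{-1}f(g')=1_H$, so by \emph{purity of $\sigma$} the element $w$ is idempotent, whence $\varphi(w)=e$ and $g=g'$, a contradiction. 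The point to highlight is that purity of $\sigma$ is exactly the input that collapses the fibers of $f$; the rest is bookkeeping with the machinery of Sections~\ref{booleanaction} and \ref{meetlatticetight}.
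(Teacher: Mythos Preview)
Your proof is correct and follows essentially the same route as the paper: both define the map $a_g\delta_g\mapsto a_g\delta_{f(g)}$ via the universal homomorphism $f\colon G(S)\to H$, and both hinge on the disjointness $E^\varphi_g\cap E^\varphi_{g'}=\{0\}$ for distinct $g,g'$ in the same $f$-fiber, proved by exhibiting a nonzero element with trivial $\sigma$-grade but nontrivial $\varphi$-grade (the paper uses $s_1^\ast s_2$ directly, you use $u^\ast x v$, which works just as well). One minor slip to fix: you first write $x\le u^\ast u$, $x\le v^\ast v$ but then invoke $x\le uu^\ast$, $x\le vv^\ast$; the paper's definition of $E_g$ requires the latter, and under that reading your computation $ww^\ast=u^\ast xu$ and $u(ww^\ast)u^\ast=x\neq 0$ is correct.
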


		\begin{proof}
		Because of the length of the checks needed to rigorously prove every statement, we give mostly an outline.
			
		We will refer to the universal group $G(S)$ as $G$.	By Theorem~\ref{universalgroup}, there exists a group homomorphism $f: G \rightarrow H$ such that $\sigma = f \circ \varphi$. For $g \in G$, let $E_g^{\varphi}$ denote our construction with the pure grading $\varphi$. Similarly, for $h \in H$, let $E_h^{\sigma}$ denote our construction with the pure grading $\varphi$. For any $h \in G$, define \[X_h = \{g \in G \colon f(g) = h\}\]
		
		We first show that for any $h \in H$, the following equality holds.
		
		\begin{equation} \label{eqn:equality} (E_h^{\sigma})^{\times} = \bigsqcup_{g \in X_h} (E_g^{\varphi})^{\times} \end{equation}
		
		Let $0 \neq x \in E^{\sigma}_h$, which implies that there exists $s \in S$ such that  $x \leq s s^{\ast}$ and $\sigma(s) = h$. Note that we must have that $x \in (E^{\varphi}_{\varphi(s)})^{\times}$ because $x \leq ss^{\ast}$ is not affected by the chosen grading. Then, because $\sigma(s) = f(\varphi(s)) \Rightarrow h = f(\varphi(s))$, we find that $\varphi(s) \in X_h$ so $(E_h^{\sigma})^{\times} \subseteq \bigsqcup_{g \in X_h} (E_g^{\varphi})^{\times}$. The other direction is similar.
		
		It remains to prove that our union is disjoint. We do this by showing that for any $g_1 \neq g_2 \in X_h$, we have that $E^{\varphi}_{g_1} \cap E^{\varphi}_{g_2} = \{0\}$. Let $x \in E^{\varphi}_{g_1} \cap E^{\varphi}_{g_2}$. This means that there exists $s_1, s_2 \in S$ such that $x \leq s_1 s_1^{\ast}, s_2 s_2^{\ast}$ and $\varphi(s_1) = g_1$ and $\varphi(s_2) = g_2$. Note that $x^2 = x \leq s_1s_1^{\ast}s_2s_2^{\ast}$. 
		
		If $s_1^{\ast}s_2 = 0$, then $x = 0$ and we are done. Otherwise, $s_1^{\ast}s_2 \neq 0$ and thus we calculate that
		
		\[\varphi(s_1^{\ast}s_2) = g_1^{-1}g_2 \neq 1_G\]
		\[\sigma(s_1^{\ast}s_2) = h^{-1}h = 1_H\]
		
		But $\sigma^{-1}(1_H) = E^{\times} = \varphi^{-1}(1_H)$, hence $s_1^{\ast}s_2$ is both an idempotent and not an idempotent, which is a contradiction.
		
		Note that because $f$ is a group homomorphism, we also have that $X_{h^{-1}} = \{x^{-1} \colon x \in X_h\}$ so 
		
		\[(E_{h^{-1}}^{\sigma})^{\times} = \bigsqcup_{g \in X_h} (E_{g^{-1}}^{\varphi})^{\times}\] 
		
		It's not hard to see then that the partial action $\phi^{\sigma}_h: E^{\sigma}_{h^{-1}} \rightarrow E^{\sigma}_h$, is exactly the action of individual actions of $\phi^{\varphi}_g: E_{g^{-1}}^{\varphi} \rightarrow E_g^{\varphi}$ acting on each individual component for $g \in X_h$.
		
		We next show that \[\mathcal T_c(E^{\sigma}_h) = \bigoplus_{g \in X_h} \mathcal T_c(E^{\varphi}_g)\] where $\bigoplus_{i \in I} \mathcal B_i$ is the generalized Boolean algebra formed by the finite disjoint sums of its components.
		
		We do this using Lemma~\ref{openident}. The lemma says that $\mathcal T_c(E^{\sigma}_h)$ is the open compact subsets of $T(E)$ contained in $\bigcup_{x \in E^{\sigma}_h} V_x$. Similarly, for every $g \in X_h$, $\mathcal T_c(E^{\varphi}_g)$ is the open compact subsets of $T(E)$ contained in $\bigcup_{x \in E^{\varphi}_g} V_x$. It suffices to show that \[\bigcup_{x \in E^{\sigma}_h} V_x = \bigsqcup_{g \in X_h} \left(\bigcup_{x \in E^{\varphi}_g} V_x \right)\]
		
		It's clear that these the unions are equal because $E^{\sigma}_h = \bigcup_{g \in X_h} E^{\varphi}_g$. The fact that the union is disjoint follows directly from Equation \ref{eqn:equality} because if $x \in E^{\varphi}_{g_1}$ and $y \in E^{\varphi}_{g_2}$ for $g_1 \neq g_2$, we have that $xy \in E^{\varphi}_{g_1} \cap E^{\varphi}_{g_2} = \{0\}$ (by downward closure) so $xy = 0$. This means that $V_x \cap V_y = V_{xy} = \emptyset$ so the pairwise intersections of $\bigcup_{x \in E^{\varphi}_g} V_x$ for $g \in X_h$ are disjoint.
		
		Similarly, the actions $(\tilde{\phi}^{\sigma}_h)^{-1}: \mathcal T_c(E^{\sigma}_{h^{-1}}) \rightarrow \mathcal T_c(E^{\sigma}_h)$ are also the action on individual components with $(\tilde{\phi}^{\varphi}_g)^{-1}: \mathcal T_c(E^{\varphi}_{g^{-1}}) \rightarrow \mathcal T_c(E^{\varphi}_g)$ for $g \in X_h$.
		
		Through another similar transformation, we see that \begin{equation}\label{eqn:eqring} \mathrm{Lc}(\mathcal T_c(E^{\sigma}_h), R) \cong \bigoplus_{g \in X_h} \mathrm{Lc}(\mathcal T_c(E^{\varphi}_g), R)\end{equation} where the action on ideals again corresponds to the action on individual components.
		
		From here, we have that the associated partial skew group rings are given as 
		\[L_R(S, \varphi) = \left\{\sum_{g \in G} f_g \delta_g \colon f_g \in \mathrm{Lc}(\mathcal T_c(E^{\varphi}_g), R), f_g = 0 \text{ for all but finitely many } g \right\}\]
		\[L_R(S, \sigma) = \left\{\sum_{h \in H} f_h \delta_h \colon f_h \in \mathrm{Lc}(\mathcal T_c(E^{\varphi}_h), R), f_h = 0 \text{ for all but finitely many } h \right\}\]
		
		There are isomorphisms $L_R(S, \varphi) \rightarrow L_R(S, \sigma)$ as \[f_g \delta_g \mapsto f_g \delta_{f(g)}\] and $L_R(S, \sigma) \rightarrow L_R(S, \varphi)$ as \[f_h \delta_h \mapsto \sum_{g \in X_h} (f_h)_g\delta_g\] where $(f_h)_g$ for $g \in X_h$ is the component associated to $f_h$ in the identification \ref{eqn:eqring}.
	\end{proof}
	
	The following corollary now follows.
	
	\begin{corollary}
	Let $S$ be a strongly $E^{\ast}$-unitary inverse semigroup and let $\varphi_1: S^{\times} \rightarrow G_1$ and $\varphi: S^{\times} \rightarrow G_2$ be two pure gradings on $S$. Then $L_R(S, \varphi_1) \cong L_R(S, \varphi_2)$. Namely, we can just write $L_R(S)$.
	\end{corollary}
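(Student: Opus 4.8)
The plan is to deduce this immediately from Theorem~\ref{isoswitch} by using the universal group as a common intermediary. Let $G(S)$ be the universal group of $S$ from Theorem~\ref{universalgroup}, and let $\varphi\colon S^{\times} \to G(S)$ be the associated pure grading. Theorem~\ref{isoswitch} applies verbatim with $H = G_1$ and $\sigma = \varphi_1$ to yield an isomorphism of $R$-algebras $L_R(S, \varphi_1) \cong L_R(S, \varphi)$. Applying the same theorem again with $H = G_2$ and $\sigma = \varphi_2$ gives $L_R(S, \varphi_2) \cong L_R(S, \varphi)$.

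Composing the first isomorphism with the inverse of the second produces an isomorphism $L_R(S, \varphi_1) \cong L_R(S, \varphi_2)$, which is exactly the claim. Since this identification does not depend on the chosen grading, the notation $L_R(S)$ is well-defined up to isomorphism.

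There is essentially no obstacle here: the entire content has already been absorbed into Theorem~\ref{isoswitch}, whose hypotheses (an arbitrary group $H$ equipped with a pure grading $\sigma\colon S^{\times}\to H$, together with the universal group $G(S)$) are precisely what is needed to apply it to each of $\varphi_1$ and $\varphi_2$ in turn. The only thing to remark is that the isomorphisms constructed in the proof of Theorem~\ref{isoswitch} are genuine $R$-algebra isomorphisms, so their composition is as well, and (by the grading-respecting nature of the maps $f_g\delta_g \mapsto f_g\delta_{f(g)}$) the resulting isomorphism is compatible with the canonical local units $\{U\delta_e\}_{U\in\mathcal T_c(E)}$, which are independent of the grading.
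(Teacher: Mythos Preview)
Your proof is correct and matches the paper's intended approach: the paper simply states ``The following corollary now follows'' after Theorem~\ref{isoswitch}, leaving the reader to pass through the universal group exactly as you do. Your additional remark about compatibility with the local units is a harmless extra observation but is not needed for the statement as written.
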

	
	Hence, for a strongly $E^{\ast}$-unitary inverse semigroup, we can compute our associated $R$-algebra using any group $G$ and pure grading $\varphi: S^{\times} \rightarrow G$. However, there are many cases where it is helpful to keep in mind a particular group $G$ and pure grading $\varphi: S^{\times} \rightarrow G$. Hence, when convenient, we still write $(S, \varphi)$ and $L_R(S, \varphi)$.
	
	We now show that our induced $R$-algebra $L_R(S)$ is isomorphic to multiple previously considered constructions in the literature. 
	
	\begin{theorem} \label{thm:isogroupoid}
		For a strongly $E^{\ast}$-unitary inverse semigroup $S$, let $\mathcal G_{tight}(S)$ be the usual groupoid of tight representations (\cite{Exel2007InverseSA}) on $S$ and let $A_R(\mathcal G_{tight}(S))$ be the Steinberg algebra (\cite{STEINBERG2010689}). Let $L_R(S)$ be the partial skew group ring defined above. Then,
		
		\[L_R(S) \cong A_R(\mathcal G_{tight}(S))\]
	\end{theorem}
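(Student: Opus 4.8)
The plan is to realize both $L_R(S)$ and $A_R(\mathcal G_{tight}(S))$ as the partial skew group ring of one and the same topological partial action, namely the canonical partial action $\theta$ of $G$ on the tight filter space $T(E)$ induced by the semilattice maps $\phi_g$ of Theorem~\ref{idempotentaction}. Concretely, I would proceed in three steps: (a) identify $L_R(S)$ with $\mathrm{Lc}(T(E),R)\rtimes_\theta G$ via Stone duality; (b) identify $\mathcal G_{tight}(S)$ with the transformation groupoid $T(E)\rtimes_\theta G$, using that $S$ is strongly $E^{\ast}$-unitary; and (c) invoke the algebraic partial-crossed-product description of the Steinberg algebra of an ample partial transformation groupoid to close the loop.

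For (a), note that $T(E)$ is Hausdorff with a basis of compact open sets and $\mathcal T_c(E)=\mathcal O_c(T(E))$, so $T(E)$ is (homeomorphic to) the Stone dual $X(\mathcal B)$ of $\mathcal B=\mathcal T_c(E)$ (see \cite{lawson2022non}). Under this identification the partial action $X(\Phi)$ of Example~\ref{example:stonedualaction} is exactly $\theta$: by Lemma~\ref{openident} the ideal $\mathcal T_c(E_g)$ corresponds to the open set $\bigcup_{\varphi(s)=g}V_{ss^{\ast}}$, which is the natural $g$-domain of the canonical action, and the partial homeomorphism induced by $\tilde\phi_g$ is the one induced by $x\mapsto sxs^{\ast}$. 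Remark~\ref{fulldual} then gives $L_R(S)=\mathrm{Lc}(R,\mathcal B)\rtimes_\Phi G\cong \mathrm{Lc}(T(E),R)\rtimes_\theta G$.

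For (b), recall that $\mathcal G_{tight}(S)$ may be described as the groupoid of germs of the canonical action of $S$ on $T(E)$, whose elements are germs $[s,\xi]$ with $\xi\in V_{s^{\ast}s}$. I would first check that $c([s,\xi])=\varphi(s)$ is a well-defined continuous cocycle $\mathcal G_{tight}(S)\to G$: if $[s,\xi]=[t,\xi]$ then $sf=tf$ for some $f\in\xi$ with $f\le s^{\ast}s\wedge t^{\ast}t$, and $f\ne 0$ (otherwise $f=s^{\ast}sf=0$), so $\varphi(s)\varphi(f)=\varphi(t)\varphi(f)$ with $\varphi(f)=1_G$ forces $\varphi(s)=\varphi(t)$; multiplicativity and continuity are immediate. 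The strongly $E^{\ast}$-unitary hypothesis enters through the identity $c^{-1}(1_G)=\mathcal G_{tight}(S)^{(0)}$: if $\varphi(s)=1_G$ then $s\in E^{\times}$, and then $\theta_s$ is the identity on $V_s$, so $[s,\xi]$ is just the unit $\xi$. A cocycle on an ample groupoid whose kernel is exactly the unit space realizes that groupoid as a transformation groupoid over $G$ (the graded piece $c^{-1}(g)$ has injective source and range maps, hence is a partial bijection of $T(E)$), and unwinding the construction identifies the resulting partial action with the $\theta$ of step (a); this is the tight-spectrum version of the Milan--Steinberg correspondence \cite{Milan2011ONIS}. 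Hence $\mathcal G_{tight}(S)\cong T(E)\rtimes_\theta G$.

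Finally, for (c), the Steinberg algebra of the transformation groupoid of an ample partial action $\theta$ of a discrete group $G$ on a totally disconnected locally compact Hausdorff space $X$ is canonically isomorphic to the partial skew group ring $\mathrm{Lc}(X,R)\rtimes_\theta G$, with the indicator functions of the graded bisections corresponding to the elements $f\delta_g$; this is the algebraic counterpart of the crossed-product picture and is carried out in this generality in \cite{Boava2021LeavittPA} (see also \cite{STEINBERG2010689}). Applying this to $T(E)\rtimes_\theta G$ and chaining with (a) and (b) yields $L_R(S)\cong\mathrm{Lc}(T(E),R)\rtimes_\theta G\cong A_R(T(E)\rtimes_\theta G)\cong A_R(\mathcal G_{tight}(S))$. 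I expect step (b) to be the main obstacle: seeing that the $S$-action on $T(E)$ factors through restrictions of the $G$-partial action — equivalently that $c$ has kernel exactly the unit space — is precisely where being strongly $E^{\ast}$-unitary (rather than merely $E^{\ast}$-unitary) is essential, and matching the inverse/direction conventions of Section~\ref{inversegroupaction} with those of the transformation groupoid requires care; steps (a) and (c) are essentially bookkeeping with the constructions of Section~\ref{booleanaction}.
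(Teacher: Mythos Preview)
Your proposal is correct and follows essentially the same three-step approach as the paper: pass from $L_R(S)$ to the topological partial crossed product $\mathrm{Lc}(T(E),R)\rtimes G$ via Stone duality (Remark~\ref{fulldual} and Lemma~\ref{openident}), identify $\mathcal G_{tight}(S)$ with the transformation groupoid $G\rtimes T(E)$, and invoke the general Steinberg-algebra-as-partial-crossed-product result. The only differences are that the paper cites \cite[Theorem~5.3]{Milan2011ONIS} directly for your step~(b) rather than rederiving it via the cocycle $c([s,\xi])=\varphi(s)$, and for step~(c) the paper appeals to \cite[Theorem~5.10]{CORDEIRO2020917} (which gives the needed isomorphism in full generality for Hausdorff ample partial transformation groupoids) rather than \cite{Boava2021LeavittPA}.
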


	\begin{proof}
		Let $G$ be the universal group of $S$ and let $E$ be the idempotents of $S$. By \cite[Theorem~5.3]{Milan2011ONIS}, there is a partial action of $G$ on the tight filters $T(E)$. Using standard techniques, this induces a partial skew group ring $\mathrm{Lc}(T(E), R) \rtimes G$ and a transformation groupoid $G \rtimes T(E)$.
		
		By \cite[Theorem~5.3]{Milan2011ONIS},
		
		\[\mathcal G_{tight}(S) \cong G \rtimes T(E)\] as a topological groupoid. 
		
		Note that $T(E)$ is locally compact, Hausdorff, and zero-dimensional, so the partial transformation groupoid $G \rtimes T(E)$ is a Hausdorff ample groupoid. By \cite[Theorem~5.10]{CORDEIRO2020917}, we find that \[\mathrm{Lc}(T(E), R) \rtimes G \cong A_R(G \rtimes T(E))\]
		
		From the outline in Remark~\ref{fulldual}, we have that \[\mathrm{Lc}(T(E), R) \rtimes G \cong \mathrm{Lc}(R, \mathcal O_c(T(E))) \rtimes G\]
		
		Hence, it remains to prove that the partial action on a generalized Boolean algebra that we construct from $S$ is the same as the one induced from the compact open sets of the topological partial action in \cite{Milan2011ONIS} $(\{X_g\}_{g \in G}, \{\theta_g\}_{g \in G})$.
		
		We now briefly describe the necessary facts from the construction, which can be found in \cite[Section~3]{Milan2011ONIS}. For clarity, we note that Section~3 describes the partial action on $F(E)$, however the partial action on $T(E)$ is simply the restriction of this action, of which a discussion can be found in \cite[Section~5]{Milan2011ONIS}. Here, we view tight filters $\xi \in T(E)$ as functions from $E \rightarrow \{0, 1\}$.
		
		\[\beta_s: V_{s^{\ast}s} \rightarrow V_{ss^{\ast}} \text{ with } \beta_s(\xi)(e) = \xi(s^{\ast}es)\]
		\[\theta_g: \bigcup_{s \in \varphi^{-1}(g)} \beta_s \]
		\[X_g = \bigcup_{s \in \varphi^{-1}(g)} V_{ss^{\ast}}\]
		
		We want to show that $(\{\mathcal O_c(X_g), \mathcal O_c(\theta_g)\})$ (as in Example~\ref{example:stonedualaction}) is isomorphic to our generalized Boolean algebra $(\{\mathcal T_c(E_g)\}_{g \in G}, \{\tilde\phi_g^{-1}\})$ in Theorem~\ref{partialconstruct}.
		
		We first show that the open compact subsets of $X_g$ correspond to $\mathcal T_c(E_g)$. By Lemma~\ref{openident}, the ideal of open compact sets of $\mathcal T_c(E_g)$ correspond exactly to $\bigcup_{x \in E_g} V_x$. By definition, each $x \in E_g$ satisfies that $x \leq ss^{\ast}$ for some $s$ such that $\varphi(s) = g$ so for such $x$ we have that $V_x \subseteq V_{ss^{\ast}}$. Furthermore, any such $s$ satisfying this property has that $ss^{\ast} \in E_g$ by definition and hence $\bigcup_{x \in E_g} V_x = \bigcup_{s \in \varphi^{-1}(g)} V_{ss^{\ast}} = X_g$ so we are identifying the same sets. 
		
		It remains to show that the partial action $\phi_g: \mathcal T_c(E_{g^{-1}}) \rightarrow \mathcal T_c(E_{g})$ is the induced one from the topological partial action. Note that because $\mathcal T_c(E_{g^{-1}})$ is generated by $\{V_x\}_{x \in E_{g^{-1}}}$ so it suffices to show that maps agree on these sets. Let $x \in E_{g^{-1}}$ and let $s$ be such that $\varphi(s) = g$ and $x \leq s^{\ast}s$. We have from our construction that $(\tilde \phi_g)^{-1}(V_x) = V_{\phi_g(x)} = V_{sxs^{\ast}}$. For any $\xi \in V_{s^{\ast}s}$, we have that $\beta_s(\xi)(sxs^{\ast}) = \xi(s^{\ast} sx s^{\ast}s) = \xi(x)$. Hence, we have that $\beta_s(\xi)$ contains $sxs^{\ast}$ if and only if $\xi$ contains $x$. Note that $V_x \subseteq V_{s^{\ast}s}$ so we find that $\beta_s(V_x) = V_{sxs^{\ast}}$, which is exactly what we wanted.  
		
		Thus, we have the following isomorphisms, \[L_R(S) \cong \mathrm{Lc}(T(E), R) \rtimes G \cong A_R(\mathcal G_{tight}(S))\]
	\end{proof}
	
	We now simplify notation for elements in our $R$-algebra. For any $g \in G$ and $x \in E_g$, we define $x \delta_g \coloneqq  V_x \delta_g = 1_{V_x} \delta_g \in L_R(S, \varphi)$.
	
	\begin{lemma} \label{computationinverse} Below is the analog for the calculations in Lemma~\ref{computationskew}.
		\begin{enumerate}
			\item $L_R(S)$ is generated as an $R$-algebra by elements of the form $\{x\delta_g\}$ for arbitrary $g \in G$ and $x \in E_g$
			\item $1_{V_x} 1_{V_y} = 1_{V_{xy}}$ for $x, y \in E$
			\item $\tilde{\phi}_{g^{-1}}(1_{V_x}) = 1_{V_{\phi_{g^{-1}}(x)}}$ for $x \in E_g$
			\item $(x \delta_g)(y \delta_{g'}) = \phi_g(y \phi_{g^{-1}}(x))$ for $x \in E_g$ and $y \in E_{g'}$
			\item $(x \delta_e)(y \delta_g) = (xy) \delta_g$ for $x \in E$ and $y \in E_g$
			\item $(y \delta_g) (x \delta_e) = \phi_g(x\phi_{g^{-1}}(y)) \delta_g$ for $x \in E$ and $y \in E_g$
			\item $(x' \delta_e)(y \delta_g)(x \delta_e) =  (x'\phi_g(x \phi_{g^{-1}}(y))) \delta_g$ for $x, x' \in E$ and $y \in E_g$
			\item $(x \delta_g)(y \delta_e)(\phi_{g^{-1}}(x)\delta_{g^{-1}}) = \phi_g(y\phi_{g^{-1}}(x))\delta_e$ for $g \in G$, $y \in E$, and $x \in E_g$
			\item $(x \delta_g)(\phi_{g^{-1}}(x)\delta_{g^{-1}}) = x\delta_e$ for $g \in G$ and $x \in E_g$
			\item Let $x \in E_g$ and $\{x_1, \ldots, x_n\} \subseteq E_g$ be a finite cover of $x$ in $E_g$. Then \[x \delta_g = \sum_{\emptyset \neq J \subseteq [1, \ldots, n]} (-1)^{|J|-1} \left(\prod_{j \in J}  x_j \right) \delta_g\]
			\item $x \delta_g \neq 0$ for any $g \in G$ and $0 \neq x \in E_g$
		\end{enumerate}
	\end{lemma}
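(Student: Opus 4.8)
The plan is to obtain items (1)--(10) mechanically as the images of the corresponding items of Lemma~\ref{computationskew} under the dictionary $U \rightsquigarrow V_x$, $\mathcal I_g \rightsquigarrow \mathcal T_c(E_g)$, $\phi_g \rightsquigarrow \tilde\phi_g^{-1}$, and to prove (11) separately by exhibiting a tight filter through any nonzero idempotent. So first I would record three translation facts. (a) By Lemma~\ref{covgen} applied to the meet semilattice $E_g$ (which has a $0$ and is closed downwards in $E$), the ideal $\mathcal T_c(E_g)$ is both covered and generated by $\{V_x : x\in E_g\}$. (b) Since a tight filter contains $x$ and $y$ iff it contains $x\wedge y = xy$, we have $V_x\wedge V_y = V_{xy}$ for $x,y\in E$; and since each $E_h$ is downward closed, all the idempotents written below ($xy$, $\phi_{g^{-1}}(x)$, $y\phi_{g^{-1}}(x)$, $\prod_{j\in J}x_j$, etc.) lie in the appropriate $E_h$, so the symbols $V_{(\cdots)}$ are defined. (c) By the explicit description of the action preceding Theorem~\ref{partialconstruct}, the generalized Boolean algebra map $\tilde\phi_g^{-1}\colon\mathcal T_c(E_{g^{-1}})\to\mathcal T_c(E_g)$ sends $V_x\mapsto V_{\phi_g(x)}$.

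With (a)--(c) in hand, (2) and (3) are Lemma~\ref{computationskew}(2),(3) rewritten; (4) follows from Lemma~\ref{computationskew}(4) via $\tilde\phi_g^{-1}\bigl(V_y\wedge\tilde\phi_{g^{-1}}^{-1}(V_x)\bigr)=\tilde\phi_g^{-1}(V_{y\phi_{g^{-1}}(x)})=V_{\phi_g(y\phi_{g^{-1}}(x))}$; and (5)--(9) follow by specializing (4) exactly as (5)--(9) of Lemma~\ref{computationskew} followed from its (4). For (10), observe that $\{x_1,\dots,x_n\}$ being a finite cover of $x$ in $E_g$ together with Theorem~\ref{ultratight}(2) forces $V_x=\bigvee_{i=1}^n V_{x_i}$, so Lemma~\ref{computationskew}(10) applies with $U=V_x$, $U_i=V_{x_i}$, and $\bigwedge_{j\in J}V_{x_j}=V_{\prod_{j\in J}x_j}$. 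For (1), by Lemma~\ref{computationskew}(1) it suffices to show every $V\delta_g$ with $V\in\mathcal T_c(E_g)$ lies in the $R$-algebra $A$ generated by the $x\delta_g$'s; since $\{V_x\delta_e\}_{x\in E}$ are among the generators and $\{V_x\}_{x\in E}$ generates $\mathcal T_c(E)$, the argument from the proof of Theorem~\ref{saturatedgeneratingset} (implement the Boolean operations at level $e$, then pass to covers and expand $(\bigvee_i V_{x_i})\delta_g$ via (10)) shows first $\{V\delta_e\}_{V\in\mathcal T_c(E)}\subseteq A$ and then $\{V\delta_g\}_{V\in\mathcal T_c(E_g)}\subseteq A$ using $(V\delta_e)\bigl((\bigvee_i V_{x_i})\delta_g\bigr)=V\delta_g$ whenever $V\le\bigvee_i V_{x_i}$.

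Item (11) is the only genuinely new content. Unwinding definitions, $x\delta_g = 1_{V_x}\delta_g$ vanishes in $L_R(S)=\mathrm{Lc}(R,\mathcal B)\rtimes_{\Phi} G$ iff $1_{V_x}=0$ in $\mathrm{Lc}(R,\mathcal B)$ iff $V_x=\emptyset$ as a subset of $T(E)$, so it suffices to produce a tight filter containing $x$. Since $x\neq 0$, the principal filter $x^{+}=\{y\in E: x\le y\}$ is a genuine filter of $E$ (nonempty as $x\in x^{+}$, proper as $0\notin x^{+}$, upward closed by definition, and closed under meets since $x\le a,\,x\le b\Rightarrow x\le ab$). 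A standard Zorn's lemma argument --- the same one used in the proof of Theorem~\ref{pullbackinj} --- extends $x^{+}$ to an ultrafilter $\xi\in U(E)\subseteq T(E)$, whence $\xi\in V_x$ and $V_x\neq\emptyset$.

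Beyond bookkeeping there is really no hard step here: I expect the main care needed in (1)--(10) is simply keeping straight which of $\phi_g$, $\tilde\phi_g$, $\tilde\phi_g^{-1}$ is meant in each slot and checking that the idempotents inside each $V_{(\cdot)}$ lie in the relevant downward-closed $E_h$, and for (11) the only content is the elementary fact that every nonzero idempotent sits in an ultrafilter.
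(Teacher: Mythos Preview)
Your proposal is correct and follows essentially the same approach as the paper: items (2)--(9) are reduced mechanically to Lemma~\ref{computationskew} via the dictionary $U\leftrightarrow V_x$, item (10) goes through the observation that a finite cover of $x$ yields $V_x=\bigvee_i V_{x_i}$ (the paper phrases this without citing Theorem~\ref{ultratight}(2) explicitly, but the content is the same), item (1) upgrades the $R$-span statement to $R$-algebra generation by the Theorem~\ref{saturatedgeneratingset}-style argument you sketch, and item (11) is handled by embedding $x$ in an ultrafilter. Your write-up is, if anything, slightly more explicit about the translation facts (a)--(c) than the paper's one-line reductions.
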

	
	\begin{proof}
		Proofs easily reduce to the Lemma~\ref{computationskew} except for (1), (10), and (11).
		
		In (1) the difference is that we are generating now as an $R$-algebra instead of just an $R$-span. Let $A_R$ be the algebra generated by our elements. Recall that $\{V_x\}_{x \in \mathcal B}$ generates the topology of $\mathcal T_c(E)$. Thus, for any $U \in \mathcal B$, we have that $U \delta_e \in A_R$. Similar to the proof of Theorem~\ref{saturatedgeneratingset}, for $U \in \mathcal \mathcal T_c(E_g)$ we have that $U \delta_g$ can be summed up by inclusion exclusion terms, which can be formed by left multiplication of our cover by elements in grading $e$. Now we are back in the case of (1) in Lemma~\ref{computationskew}, so we are done. 
		
		To prove (10), note that if $\{x_1, \ldots, x_n\}$ is a finite cover of $x_i$, we have that $V_x = \bigcup_{i=1}^n V_{x_i}$ because $\{x_1, \ldots, x_n\}$ is a finite cover of $x$ so any filter containing $x$ must contain at least one of the $x_i$. This then also reduces to (10) in Lemma~\ref{computationskew}.
		
		(11) is clear from the standard fact that any $0 \neq x \in E_g$ is contained in an ultrafilter, so $V_x \neq \emptyset$.
	\end{proof}

	Towards an analog of Theorem~\ref{saturatedgeneratingset}, let $S$ be purely graded by a free group $\mathbb F[\mathcal A]$ for some set of generators $\mathcal A$. We present conditions on $(S, \varphi)$ that characterize when the associated partial action $(\mathcal B, \Phi)$ is orthogonal and semi-saturated. 
	
	\begin{theorem} Let $(S, \varphi)$ be a strongly $E^{\ast}$-unitary inverse semigroup $(S, \varphi)$ and let $(\mathcal B, \Phi)$ be the associated partial action. Then,
		\begin{enumerate}
			\item $(\mathcal B, \Phi)$ is \textit{orthogonal} if $E_{a} \cap E_{b} = \{0\}$ for $a \neq b \in \mathcal A$.
			\item $(\mathcal B, \Phi)$ is \textit{semi-saturated} if $E_{st} \subseteq E_t$ for $s, t \in \mathbb F[\mathcal A]$ with $|s+t| = |s| + |t|$
		\end{enumerate}
		We call $(S, \varphi)$ orthogonal/semi-saturated if the respective conditions hold.
	\end{theorem}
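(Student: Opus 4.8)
The plan is to do no essentially new work: I would transport both conditions through the correspondence of Section~\ref{meetlatticetight} between downward-closed subsets of $E$ and ideals of $\mathcal T_c(E)$, using that every $E_g \subseteq E$ is downward closed and that, by the construction in Theorem~\ref{partialconstruct}, the $g$-th ideal of $\Phi$ is $\mathcal I_g = \mathcal T_c(E_g)$.

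For (1), the ideals involved are $\mathcal I_a = \mathcal T_c(E_a)$ and $\mathcal I_b = \mathcal T_c(E_b)$. First I would apply Corollary~\ref{intersectdown} with $P = E$, $P_1 = E_a$, $P_2 = E_b$ to identify $\mathcal I_a \cap \mathcal I_b = \mathcal T_c(E_a) \cap \mathcal T_c(E_b) = \mathcal T_c(E_a \cap E_b)$ inside $\mathcal T_c(E)$. If $E_a \cap E_b = \{0\}$, then this meet semilattice has only the element $0$, so it carries no filter (a filter must be nonempty and proper), whence $T(E_a \cap E_b) = \emptyset$ and $\mathcal T_c(E_a \cap E_b)$ is the trivial generalized Boolean algebra $\{0\}$. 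Hence $\mathcal I_a \cap \mathcal I_b = \{0\}$, which is orthogonality. (Every implication here reverses --- any nonzero $x$ gives the filter $x^{+}$ and hence, by Zorn, an ultrafilter, so $T(P) = \emptyset$ forces $P = \{0\}$ --- so (1) is really an equivalence, but only the stated direction is needed.)

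For (2), Remark~\ref{equivsemi} reduces semi-saturation of $\Phi$ to the inclusion of ideals $\mathcal I_{st} \subseteq \mathcal I_s$ whenever $|s+t| = |s| + |t|$, that is, to $\mathcal T_c(E_{st}) \subseteq \mathcal T_c(E_s)$ as ideals of $\mathcal T_c(E)$. Since $E_{st}$ and $E_s$ are downward closed in $E$, Corollary~\ref{downwardideal} exhibits both $\mathcal T_c(E_{st})$ and $\mathcal T_c(E_s)$ as ideals of $\mathcal T_c(E)$, and Lemma~\ref{openident} identifies them with the compact open subsets of $T(E)$ contained in $Y_{st} = \bigcup_{x \in E_{st}} V_x$ and in $Y_s = \bigcup_{x \in E_s} V_x$ respectively. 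The hypothesised containment among the relevant $E_g$ (with the reduced factorisation of $st$ oriented so as to feed Remark~\ref{equivsemi}) then gives $Y_{st} \subseteq Y_s$, and therefore $\mathcal T_c(E_{st}) = \mathcal O_c(T(E))(Y_{st}) \subseteq \mathcal O_c(T(E))(Y_s) = \mathcal T_c(E_s)$, completing the proof.

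The step I expect to need the most care is this final matching in (2): keeping straight which of the two factors of the reduced word $st$ the hypothesis constrains versus which one Remark~\ref{equivsemi} constrains, and --- unlike in (1) --- not overclaiming, since $\mathcal T_c(E_{st}) \subseteq \mathcal T_c(E_s)$ forces each nonzero $x \in E_{st}$ only to sit below a finite join of elements of $E_s$, not below a single one, which is exactly why the theorem is phrased as a sufficient condition rather than a characterisation.
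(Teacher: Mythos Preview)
Your proposal is correct and, for part~(2), follows the paper's proof essentially verbatim: both invoke Lemma~\ref{openident} to identify $\mathcal T_c(E_g)$ with $\mathcal O_c(T(E))(\bigcup_{x\in E_g}V_x)$ and then read off the ideal inclusion from the containment of index sets. Your flagged worry about which factor the hypothesis constrains is well placed --- the paper itself writes $E_{st}\subseteq E_t$ in the statement but uses $\bigcup_{x\in E_{st}}V_x\subseteq\bigcup_{x\in E_s}V_x$ in the proof, so the mismatch is a typo there, not a gap in your argument.

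For part~(1) you take a slightly different, more packaged route: you appeal to Corollary~\ref{intersectdown} to get $\mathcal T_c(E_a)\cap\mathcal T_c(E_b)=\mathcal T_c(E_a\cap E_b)=\mathcal T_c(\{0\})=\{0\}$ in one stroke, whereas the paper argues directly that the generating sets $\{V_x\}_{x\in E_a}$ and $\{V_y\}_{y\in E_b}$ have pairwise trivial intersections $V_x\cap V_y=V_{xy}=V_0=\emptyset$. Your version is cleaner and also makes the converse transparent (as you note); the paper's version avoids invoking the corollary but is otherwise the same idea unpacked.
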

	\begin{proof}We prove each condition separately.
		\begin{enumerate}
			\item Note that $\mathcal I_a$ has a cover by $\{\mathcal V_{x}\}_{x \in E_a}$ and similarly for $\mathcal I_b$. Because $\mathcal I_a$ and $\mathcal I_b$ are ideals, it suffices to prove that the pairwise intersection of items in the cover is empty. We know that $V_{x} \cap V_{y} = V_{xy}$.  Because $x \in E_a$ and $y \in E_b$, we know that $xy \in E_a \cap E_b = \{0\}$, so $V_{xy}$ is the empty-set. Hence, $\mathcal I_a \cap \mathcal I_b = \emptyset$ is empty, so the associated partial action on a generalized Boolean algebra is orthogonal.
			
			\item Let $s, t \in \mathbb F[\mathcal A]$ be such that $|s+t| = |s|+|t|$. We will show that $\mathcal T_c(E_{st}) \subseteq \mathcal T_c(E_s)$ (when viewed in $\mathcal T_c(E)$). Lemma~\ref{openident} says that $\mathcal T_c(E_s)$ is the set of all open compact sets contained in $\bigcup_{x \in E_s} V_{x}$ while $\mathcal T_c(E_{st})$ is the set of all compact sets contained in $\bigcup_{x \in E_{st}} V_{x}$. If $E_{st} \subseteq E_t$, then clearly $\bigcup_{x \in E_{st}} V_{x} \subseteq \bigcup_{x \in E_s} V_{x}$, so $\mathcal T_c(E_{st}) \subseteq \mathcal T_c(E_s)$.
		\end{enumerate}
	\end{proof}

	\begin{corollary} \label{saturatedsemigroup} Let $(S, \varphi)$ semi-saturated. For all $a \in \mathcal A$, let $C_a \subseteq E_a$ be sets such that for all $x \in E_{a}$ we have for some $y \in C_a$ that $x \leq y$. For all $a \in \mathcal A$, let $C_{a^{-1}} \subseteq E_{a^{-1}}$ be sets that satisfies an analogous condition. Then a set of generators for $L_R(S, \varphi)$ is \[\{x\delta_e\}_{x \in E} \cup \{x \delta_a\}_{ a\in \mathcal A, x \in C_a} \cup \{x \delta_{a^{-1}}\}_{a \in \mathcal A, x \in C_{a^{-1}}}\]
	\end{corollary}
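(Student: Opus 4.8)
The plan is to obtain this corollary as a direct application of Theorem~\ref{saturatedgeneratingset} to the partial action $(\mathcal B, \Phi)$ with $\mathcal B = \mathcal T_c(E)$ and $\mathcal I_g = \mathcal T_c(E_g)$ built from $(S,\varphi)$ in Theorem~\ref{partialconstruct}. First I would record the hypotheses needed to invoke that theorem: the group is the free group $G = \mathbb F[\mathcal A]$ by the standing assumption, and $(\mathcal B,\Phi)$ is semi-saturated precisely because $(S,\varphi)$ is semi-saturated, which is exactly the equivalence established in the theorem immediately preceding this corollary (semi-saturation of $(S,\varphi)$ means $E_{st}\subseteq E_t$ whenever $|s+t| = |s|+|t|$, which forces $\mathcal T_c(E_{st})\subseteq\mathcal T_c(E_s)$, i.e.\ Remark~\ref{equivsemi}).

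Next I would fix the generating set and the covers that feed into Theorem~\ref{saturatedgeneratingset}. For the generating set of $\mathcal B$ I take $C = \{V_x\}_{x\in E}$, which generates $\mathcal T_c(E)$ as a generalized Boolean algebra by Lemma~\ref{covgen}. For the covers of the ideals $\mathcal I_a = \mathcal T_c(E_a)$ and $\mathcal I_{a^{-1}} = \mathcal T_c(E_{a^{-1}})$ I take $\{V_y\}_{y\in C_a}$ and $\{V_y\}_{y\in C_{a^{-1}}}$. The one point requiring verification is that these really are covers in the sense of generalized Boolean algebras: by Lemma~\ref{covgen} applied to the meet semilattice $E_a$, the family $\{V_x\}_{x\in E_a}$ already covers $\mathcal T_c(E_a)$, so every $U\in\mathcal T_c(E_a)$ satisfies $U\leq\bigvee_{i=1}^n V_{x_i}$ for some $x_i\in E_a$; by the hypothesis on $C_a$ each $x_i$ lies below some $y_i\in C_a$, whence $V_{x_i}\subseteq V_{y_i}$ and therefore $U\leq\bigvee_{i=1}^n V_{y_i}$. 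The same argument handles $C_{a^{-1}}$.

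With these data in place, Theorem~\ref{saturatedgeneratingset} immediately gives that $L_R(S,\varphi) = \mathrm{Lc}(R,\mathcal B)\rtimes_\Phi G$ is generated as an $R$-algebra by
\[\{V_x\delta_e\}_{x\in E}\cup\{V_y\delta_a\}_{a\in\mathcal A,\,y\in C_a}\cup\{V_y\delta_{a^{-1}}\}_{a\in\mathcal A,\,y\in C_{a^{-1}}},\]
and unwinding the shorthand $x\delta_g \coloneqq V_x\delta_g$ introduced just before Lemma~\ref{computationinverse} yields exactly the set in the statement.

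I do not expect a genuine obstacle here: the substantive work — the inclusion–exclusion argument reducing arbitrary $U\delta_g$ to cover elements, and the induction on word length using semi-saturation — has already been carried out in the proof of Theorem~\ref{saturatedgeneratingset}. The only care needed is the bookkeeping translating between covers of $E_a$ (as a meet semilattice) and covers of $\mathcal T_c(E_a)$ (as an ideal of the generalized Boolean algebra), which is precisely what Lemma~\ref{covgen} supplies.
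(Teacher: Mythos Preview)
Your proposal is correct and follows essentially the same approach as the paper: reduce to Theorem~\ref{saturatedgeneratingset} by taking $C = \{V_x\}_{x\in E}$ via Lemma~\ref{covgen} and checking that $\{V_y\}_{y\in C_a}$ covers $\mathcal T_c(E_a)$. The only cosmetic difference is that the paper verifies the cover property by a direct compactness argument on $U\subseteq T(E_a)$ (each tight filter in $U$ contains some $x\in E_a\leq y\in C_a$, then extract a finite subcover), whereas you factor through Lemma~\ref{covgen} first and then bump each $V_{x_i}$ up to $V_{y_i}$; these are the same argument in slightly different packaging.
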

	
	\begin{proof}
		Because $(S, \varphi)$ semi-saturated implies that the induced partial action on a generalized Boolean algebra is semi-saturated, we can use Theorem~\ref{saturatedgeneratingset}.
		
		We know that $\{V_x \colon x \in E\}$ generates $\mathcal T_c(E)$. Thus, in the notation of Theorem~\ref{saturatedgeneratingset}, we set $C = \{V_x \colon x \in E\}$.
		
		It then suffices to show that, for any $a \in \mathcal A$, $\{V_x \colon x \in C_a\}$ covers $\mathcal T_c(E_a)$ as a generalized Boolean algebra (the analogous statement will hold for $a^{-1}$). Let $U \in \mathcal T_c(E_a)$ be viewed as a compact set in $T(E_a)$. For all tight filters $\xi \in U$, there is an element $x \in \xi$. By the assumption, there exists $y \in C_a$ such that $x \leq y$ so we have that $\xi \in V_y \in \mathcal T_c(E_x)$. Hence, the union of all $\{V_x \colon x \in C_a\}$ contains $U$ as a set. Because $U$ is compact, there is a finite such cover. Hence, $\{V_x\}_{x \in C_a}$ forms a cover for the generalized Boolean algebra $\mathcal T_c(E_a)$.
		
		A similar proof works for $a^{-1}$, so we are done.
	\end{proof}
	
	Finally, we will look at inverse subsemigroups. We write $S_1 \subseteq S_2$, to refer to an injective homomorphism $f: S_1 \rightarrow S_2$ that preserves the $0$ and \textbf{all} multiplications, not just non-zero ones. Note that $S_1 \subseteq S_2$ induces a natural inclusion $E_1 \subseteq E_2$.
	
	Furthermore,  if $S_2$ is a strongly $E^{\ast}$-unitary inverse semigroup with a pure grading $\varphi: S_2^{\times} \rightarrow G$ for some group $G$ and $S_1 \subseteq S_2$, it's not difficult to prove the induced map $\varphi_{\mid S_1^{\times}}: S_1^{\times} \rightarrow G$ is a pure grading on $S_1$ and hence $S_1$ is a strongly $E^{\ast}$-unitary inverse semigroup. 
	
	From now on, let $S_2$ be a strongly $E^{\ast}$-unitary inverse semigroup graded by group $G$ with pure grading $\varphi: S_2^{\times} \rightarrow G$ and let $S_1$ be an inverse semigroup such that $S_1 \subseteq S_2$ where $S_1$ is given the pure grading from $\varphi$. We refer to the pure grading on $S_1$ as $\varphi$, even though more precisely it should be $\varphi_{\mid S_1^{\times}}$. For the strongly $E^{\ast}$-inverse semigroups $S_i$, let $E_{i, g}$ be our constructed downwards closed meet semilattices, $\phi_{i, g}: E_{i, g^{-1}} \rightarrow E_{i, g}$ be the meet semilattice isomorphisms, and $(\mathcal B_i, \Phi_i)$ be our associated partial action on a generalized Boolean algebra.
	
	 In general, $S_1 \subseteq S_2$ not enough to induce a partial subaction $(\mathcal B_1, \Phi_1) \subseteq (\mathcal B_2, \Phi_2)$. To remedy this, we define a slightly stronger notion of an inverse subsemigroup $S_1 \subseteq_c S_2$.
	
	\begin{definition} \label{subinverse} Let $S_1$ and $S_2$ be strongly $E^{\ast}$-unitary inverse semigroups. We say that \[S_1  \subseteq_c S_2\] if the following hold:
		\begin{enumerate}
			\item $S_1 \subseteq S_2$
			\item $E_1 \subseteq E_2$ (by the identification $S_1 \subseteq S_2$) preserves finite covers
		\end{enumerate}
	\end{definition}

	\begin{theorem}
	If $S_1 \subseteq_c S_2$, then there is an induced partial subaction $(\mathcal B_1, \Phi_1) \subseteq (\mathcal B_2, \Phi_2)$ and thus $L_R(S_1, \varphi) \subseteq L_R(S_2, \varphi)$ as a homogenous $G$-graded $R$-subalgebra where the identification $L_R(S_1, \varphi) \hookrightarrow L_R(S_2, \varphi)$ is given by the ``identity'' \[x \delta_g \mapsto x \delta_g\] for $g \in G$ and $x \in E_{1, g}$
	\end{theorem}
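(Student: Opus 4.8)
The plan is to produce the candidate morphism of partial actions explicitly and then verify the three conditions of Definition~\ref{def:booleanmorph}, exploiting throughout that all of the maps in sight are determined by their values on generators of the form $V_x$, so that every check collapses to a statement about a single idempotent $x \in E_1$. Since $S_1 \subseteq_c S_2$ means precisely that $S_1 \subseteq S_2$ and $E_1 \subseteq E_2$ preserves finite covers (Definition~\ref{subinverse}), Theorem~\ref{pullbackinj} gives an injective morphism of generalized Boolean algebras
\[
f = re^{-1} \colon \mathcal B_1 = \mathcal T_c(E_1) \hookrightarrow \mathcal T_c(E_2) = \mathcal B_2,
\qquad V^{E_1}_{(x\colon x_1,\dots,x_n)} \mapsto V^{E_2}_{(x\colon x_1,\dots,x_n)} ,
\]
which at once supplies condition (1) of Definition~\ref{def:booleanmorph} and the injectivity of the underlying map demanded by the notion of a partial subaction.

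For condition (2), the first observation is that $E_{1,g} \subseteq E_{2,g}$ for every $g \in G$: if $x \in E_{1,g}$ then $x \le ss^{\ast}$ for some $s \in S_1$ with $\varphi(s) = g$, and because $S_1 \subseteq S_2$ preserves products and $0$ (hence also the involution, which is determined by the semigroup structure) and $S_1$ carries the restricted grading, the very same $s$ witnesses $x \in E_{2,g}$. Now $\mathcal I_{1,g} = \mathcal T_c(E_{1,g})$, regarded as an ideal of $\mathcal B_1$ via Corollary~\ref{idealcloseddown}, is generated as a generalized Boolean algebra by $\{V^{E_1}_x\}_{x \in E_{1,g}}$ (Lemma~\ref{covgen}); since $f$ is a morphism, $f(\mathcal I_{1,g})$ is the subalgebra generated by $\{V^{E_2}_x\}_{x \in E_{1,g}} \subseteq \{V^{E_2}_x\}_{x \in E_{2,g}} \subseteq \mathcal I_{2,g} = \mathcal T_c(E_{2,g})$. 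As $\mathcal I_{2,g}$ is an ideal of $\mathcal B_2$ it is closed under meets, joins, and relative complements, so $f(\mathcal I_{1,g}) \subseteq \mathcal I_{2,g}$, giving condition (2).

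For condition (3), I would check the square of Definition~\ref{def:booleanmorph} on the generators $V^{E_1}_x$ of $\mathcal I_{1,g^{-1}}$, $x \in E_{1,g^{-1}}$. Going right then down sends $V^{E_1}_x \mapsto V^{E_2}_x \mapsto \tilde\phi_{2,g}^{-1}(V^{E_2}_x) = V^{E_2}_{\phi_{2,g}(x)}$, while going down then right sends $V^{E_1}_x \mapsto \tilde\phi_{1,g}^{-1}(V^{E_1}_x) = V^{E_1}_{\phi_{1,g}(x)} \mapsto V^{E_2}_{\phi_{1,g}(x)}$. By Theorem~\ref{idempotentaction}, $\phi_{i,g}(x) = sxs^{\ast}$ for any $s \in S_i$ with $\varphi(s)=g$ and $x \le s^{\ast}s$; choosing $s \in S_1$ (possible since $x \in E_{1,g^{-1}}$) and using that $S_1 \subseteq S_2$ preserves products and the involution, $sxs^{\ast}$ is the same element of $E_1 \subseteq E_2$ whether formed in $S_1$ or $S_2$, so $\phi_{1,g}(x) = \phi_{2,g}(x)$ and the two composites agree on generators. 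Both composites are morphisms of generalized Boolean algebras and $\mathcal I_{1,g^{-1}}$ is generated by these $V^{E_1}_x$, so the square commutes.

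With conditions (1)–(3) verified and $f$ injective, $f$ realizes $(\mathcal B_1,\Phi_1) \subseteq (\mathcal B_2,\Phi_2)$ as a partial subaction, and Theorem~\ref{subalgebra} immediately yields $L_R(S_1,\varphi) = \mathrm{Lc}(R,\mathcal B_1)\rtimes_{\Phi_1} G \subseteq \mathrm{Lc}(R,\mathcal B_2)\rtimes_{\Phi_2} G = L_R(S_2,\varphi)$ as a homogeneous $G$-graded $R$-subalgebra. The induced inclusion carries $x\delta_g = 1_{V^{E_1}_x}\delta_g \mapsto 1_{f(V^{E_1}_x)}\delta_g = 1_{V^{E_2}_x}\delta_g = x\delta_g$ for $x \in E_{1,g}$, which is exactly the ``identity'' identification asserted. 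I expect the only mildly delicate aspect to be the bookkeeping of the nested identifications $\mathcal T_c(E_{i,g}) \subseteq \mathcal T_c(E_i)$ and $\mathcal T_c(E_1) \subseteq \mathcal T_c(E_2)$ and the compatibility of $re^{-1}$ with them; there is no substantive obstacle beyond tracking these carefully.
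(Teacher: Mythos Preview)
Your proof is correct and follows essentially the same route as the paper: both build the underlying map via Theorem~\ref{pullbackinj}, verify $E_{1,g}\subseteq E_{2,g}$ and $\phi_{1,g}(x)=\phi_{2,g}(x)$ using a common witness $s\in S_1$, and conclude via Theorem~\ref{subalgebra}. The only real difference is presentational: the paper checks conditions (2) and (3) by assembling commutative squares at the meet-semilattice, tight-filter, and compact-open levels (proving an auxiliary lemma that the intermediate inclusions $E_{1,g}\subseteq E_{2,g}$ also preserve finite covers so that $re^{-1}$ is defined on each edge), whereas you work directly on the generators $V_x$ and use that $\mathcal I_{2,g}$ is closed under Boolean operations; your version is more economical, the paper's is more explicitly functorial, but the substance is the same.
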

	
	\begin{proof}
		Because $E_1 \subseteq E_2$ preserves finite covers, we have a map $\mathcal T_c(E_1) \hookrightarrow \mathcal T_c(E_2)$ by $(re^{E_2}_{E_1})^{-1}$ using Theorem~\ref{pullbackinj}.

		Our goal is to prove that this map is actually a map of partial actions on the generalized Boolean algebras defined in Definition~\ref{def:booleanmorph}. We repeat the definition in our current context for convenience.  Recall that for $i = 1, 2$ and $g \in G$, we are interpreting $\mathcal T_c(E_{1, g}) \subseteq \mathcal T_c(E_1)$ as the image of $(re_{E_{1, g}}^{E_2})^{-1}$.
		
		\begin{enumerate}
			\item $(re_{E_1}^{E_2})^{-1}(\mathcal T_c(E_{1, g})) \subseteq \mathcal T_c(E_{2, g})$
			\item The following commutative diagram holds
			
			$\begin{tikzcd}
				\mathcal T_c(E_{1, g^{-1}}) \arrow[d, "\tilde \phi_{1, g}", swap] \arrow[r, "(re_{E_1}^{E_2})^{-1}"] & \mathcal T_c(E_{2, g^{-1}}) \arrow[d, "\tilde \phi_{2, g}"] \\
				\mathcal T_c(E_{1, g}) \arrow[r, "(re_{E_1}^{E_2})^{-1}", swap] & \mathcal T_c(E_{2, g}) \\
			\end{tikzcd}$
		\end{enumerate}
		
		We first check that $(re_{E_1}^{E_2})^{-1}(\mathcal T_c(E_{1, g})) \subseteq \mathcal T_c(E_{2, g})$. To do this, we first show that $E_1 \hookrightarrow E_2$ induces an injection $E_{1, g} \hookrightarrow E_{2, g}$. Let $x \in E_{1, g}$. Then we have that $x \leq ss^{\ast}$ for some $s \in S_1$ with $\varphi(s) = g$. Because the grading on $S_1$ is induced by the one on $S_2$, the same is true in $S_2$ as well, so $x \in E_{2, g}$.
		
		It's easy then to check that the following diagram commutes.
		
		\[\begin{tikzcd}
			E_1 \arrow[r, hook] & E_2  \\
			E_{1, g}\arrow[r, hook,  swap] \arrow[u, hook] & E_{2, g} \arrow[u,hook, swap]\\
		\end{tikzcd}\]
		
		\begin{lemma}
			For meet semilattices $P_1$ and $P_2$, if $P_1 \subseteq P_2$ preserves finite covers then for any $P'_1 \subseteq P_2$ that preserves finite covers and $P'_2 \subseteq P_2$ such that $P'_1 \subseteq P'_2$, we have that $P'_1 \subseteq P'_2$ preserves finite covers.
		\end{lemma}
		
		\begin{proof}
			By viewing $P'_1 \subseteq P_1 \subseteq P_2$, all finite covers in $P'_1$ are finite covers in $P_1$, which are then finite covers in $P_2$, so $P'_1 \subseteq P_2$ preserves finite covers. All finite covers in $P_2$ containing elements only in $P'_2$ are also covers when viewed in the smaller $P'_2$. Hence, because $P'_1 \subseteq P'_2$, we have that $P'_1 \subseteq U_2$ preserves finite covers.
		\end{proof}

		Using the previous lemma and applying Lemma~\ref{downwardpreserve}, we have that all injections in our diagram preserve finite covers. Hence, we have the following commutative diagrams (where the second diagram is induced by Theorem~\ref{pullbackinj}).
		
		\[\begin{tikzcd}
			T(E_1) \arrow[d, dashed, "re_{E_{1, g}}^{E_1}", swap] & \arrow[l, dashed, "re_{E_1}^{E_2}", swap] \arrow[d,dashed, "re_{E_{2, g}}^{E_2}"] T(E_2))  \\
			T(E_{1, g}) & \arrow[l, dashed, "re_{E_{1, g}}^{E_{2, g}}"] T(E_{2, g}) \\
		\end{tikzcd} \qquad \begin{tikzcd}
		\mathcal T_c(E_1) \arrow[r, hook, "(re_{E_1}^{E_2})^{-1}"] & \mathcal T_c(E_2)  \\
		\mathcal T_c(E_{1, g}) \arrow[r, hook, "(re_{E_{1, g}}^{E_{2, g}})^{-1}", swap] \arrow[u, hook, "(re_{E_{1, g}}^{E_1})^{-1}"] & \mathcal T_c(E_{2, g}) \arrow[u,hook, "(re_{E_{2, g}}^{E_2})^{-1}", swap]\\
	\end{tikzcd}\]
		
		Hence, $(re_{E_1}^{E_2})^{-1}$ takes $\mathcal T_c(E_{1, g})$ to $\mathcal T_c(E_{2, g})$, so we are done. Note that this also proves that the map $(re_{E_1}^{E_2})^{-1}$ on $\mathcal T_c(E_{1, g})$ is induced by the inclusion $E_{1, g} \hookrightarrow E_{2, g}$.
		
		We now prove that our desired diagram commutes. By the same process as before, we have the following commutative diagrams. The first diagram commutes because for any $g \in G$ and $x \in E_{1, g^{-1}}$, for any $s \in S_1$ such $\varphi(s) = g$ and $x \leq s^{\ast}s$, a similar fact holds in $S_2$ so we have that $\phi_{1, g}(x) = s x s^{\ast} = \phi_{2, g}(x)$.
		
		\[\begin{tikzcd}
			E_{1, g^{-1}} \arrow[r, hook] \arrow[d, "\phi_{1, g}", swap] & E_{2, g^{-1}} \arrow[d, "\phi_{2, g}"] \\
			E_{1, g} \arrow[r, hook] & E_{2, g} \\
		\end{tikzcd}\qquad 		
	\begin{tikzcd}
		T(E_{1, g^{-1}})  & T(E_{2, g^{-1}})  \arrow[l, "re_{E_{1, g^{-1}}}^{E_{2, g^{-1}}}", swap, dashed] \\
		T(E_{1, g})  \arrow[u, "\tilde \phi_{1, g}"] & T(E_{2, g}) \arrow[l, "re_{E_{1, g}}^{E_{2, g}}", dashed] \arrow[u, "\tilde \phi_{2, g}", swap] \\
	\end{tikzcd} \qquad
	\begin{tikzcd}
	\mathcal T_c(E_{1, g^{-1}}) \arrow[r, "(re_{E_{1, g^{-1}}}^{E_{2, g^{-1}}})^{-1}", hook] \arrow[d, "\tilde \phi_{1, g}^{-1}", swap] & \mathcal T_c(E_{2, g^{-1}}) \arrow[d, "\tilde \phi_{2, g}^{-1}"] \\
	\mathcal T_c(E_{1, g}) \arrow[r, "(re_{E_{1, g}}^{E_{2, g}})^{-1}", swap, hook] & \mathcal T_c(E_{2, g}) \\
\end{tikzcd}\]

	The last diagram is exactly our desired diagram. Hence, $(re_{E_1}^{E_2})^{-1}$ induces a morphism $(\mathcal B_1, \Phi_1) \rightarrow (\mathcal B_2, \Phi_2)$.  Because $(re_{E_1}^{E_2})^{-1}$ is injective, we have that $(\mathcal B_1, \Phi_1) \subseteq (\mathcal B_2, \Phi_2)$ and hence $L_R(S_1, \varphi) \subseteq L_R(S_2, \varphi)$.
	
	By Theorem~\ref{preservecover}, the map$(re_{E_1}^{E_2})^{-1}: \mathcal T_c(E_1) \rightarrow \mathcal T_c(E_2)$ takes $V_x^{E_1}\mapsto V_{x}^{E_2}$ for $x \in E_1$ and hence the map on $R$-algebras sends $x \delta_g \mapsto x\delta_g$, so we are done.
	\end{proof}
	
	\subsection{Unitization}
	As a brief application, we develop a notion of unitization on a strongly $E^{\ast}$-unitary inverse semigroup $S$ that corresponds to a notion of unitization on $L_R(S)$.
	
	\begin{definition}
		For an inverse semigroup $S$, define the unitization of $S$ to be a new inverse semigroup $S^{\ast}$ where  \[S^{\ast} = S \cup \{\ast\}\] and operations are defined with $\ast$ as an identity and other operations are inherited from $S$.
	\end{definition}

	Let $E^{\ast}$ denote the idempotents of $S^{\ast}$. Note that any $s \in S$ satisfies that $s \leq \ast$ because $\ast \in E^{\ast}$ and $s = \ast \cdot s$. If $S$ can be purely graded by $G$ with morphism $\varphi: S^{\times} \rightarrow G$, then it's easy to show that $\varphi^{\ast}: (S^{\ast})^{\times} \rightarrow G$ is a pure grading on $S^{\ast}$ by $g$ where \[\varphi^{\ast}(s) = \varphi(s)\] for any $0 \neq s \in S$ and \[\varphi^{\ast}(\ast) = 1_G\] Hence, $S^{\ast}$ is also strongly $E^{\ast}$-unitary.

	\begin{lemma} \label{unitize}
		$L_R(S^{\ast}, \varphi^{\ast})$ is a unitary $R$-algebra.
	\end{lemma}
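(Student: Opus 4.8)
The plan is to reduce everything to Corollary~\ref{unitalcor}, which says that the partial skew group ring $\mathrm{Lc}(R,\mathcal B)\rtimes_{\Phi}G$ is unital if and only if the generalized Boolean algebra $\mathcal B$ is unital. In the case at hand $\mathcal B=\mathcal T_c(E^{\ast})$, where $E^{\ast}$ is the semilattice of idempotents of $S^{\ast}$, so it suffices to exhibit a top element of $\mathcal T_c(E^{\ast})$.

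First I would observe that $\ast$ is the maximum of the meet semilattice $E^{\ast}$: indeed $\ast\in E^{\ast}$ since $\ast\cdot\ast=\ast$, and for every $e\in E^{\ast}$ we have $e=\ast\cdot e$, hence $e\leq\ast$ by the definition of the order. Next, since any filter $F\subseteq E^{\ast}$ is non-empty and closed upwards, picking any $e\in F$ gives $e\leq\ast$ and therefore $\ast\in F$. Thus every filter of $E^{\ast}$ contains $\ast$; in particular every tight filter does, so $V_{\ast}=T(E^{\ast})$. Because the sets $V_{(x\colon x_1,\ldots,x_n)}$ form a basis of \emph{compact} open sets of $T(E^{\ast})$, the set $V_{\ast}=V_{(\ast)}$ is compact open, i.e. $T(E^{\ast})$ is itself a compact open subset of $T(E^{\ast})$ and hence an element of $\mathcal T_c(E^{\ast})$. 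Being the whole space, it is the top element, so $\mathcal T_c(E^{\ast})$ is a (unital) Boolean algebra.

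Finally I would apply Corollary~\ref{unitalcor} to conclude that $L_R(S^{\ast},\varphi^{\ast})=\mathrm{Lc}(R,\mathcal T_c(E^{\ast}))\rtimes_{\Phi}G$ is unital; concretely, by the proof of that corollary the unit is $1_{T(E^{\ast})}\delta_e=\ast\delta_e$, the element associated to the top of $\mathcal B$ sitting in degree $e$. There is essentially no obstacle here: the only points that require care are that $V_{\ast}$ genuinely exhausts $T(E^{\ast})$, which is immediate from upward-closedness of filters, and that $V_{\ast}$ is compact, which is built into the basis of compact open sets; the grading $\varphi^{\ast}$ being pure (hence $S^{\ast}$ being strongly $E^{\ast}$-unitary, so that $L_R(S^{\ast},\varphi^{\ast})$ is even defined) was already established in the discussion preceding the statement.
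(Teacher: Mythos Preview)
Your proof is correct and follows essentially the same approach as the paper: reduce to Corollary~\ref{unitalcor} and observe that $V_{\ast}=T(E^{\ast})$ because $\ast$ is the maximum of $E^{\ast}$ and filters are upward closed. Your version simply spells out the details (compactness of $V_{\ast}$, the explicit unit $\ast\delta_e$) that the paper leaves implicit.
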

	\begin{proof}
		By Corollary~\ref{unitalcor}, it suffices to prove that the generalized Boolean algebra $\mathcal T_c(E^{\ast})$ has a unit. $V_{\ast}$ contains all tight filters of $E^{\ast}$ as $x \leq \ast$ for any $x \in E^{\ast}$, so $V_{\ast} = T(E^{\ast})$.
	\end{proof}
	
	We now show that our unitization $L_R(S^{\ast}, \varphi^{\ast})$ has some nice properties with respect to $L_R(S, \varphi)$. These will be proven when $S \neq 0$ is not the trivial inverse semigroup (it's arguable whether such a group is strongly $E^{\ast}$-unitary). Note that $S \neq 0$ implies that $E \neq 0$ because $ss^{\ast} \in E$ and for any $s \neq 0$ we have that $ss^{\ast}s = s \Rightarrow ss^{\ast} \neq 0$.
	
	\begin{theorem} \label{unitizethm}
		Let $(S, \varphi) \neq 0$ be a strongly $E^{\ast}$-unitary inverse semigroup. Then $S \subseteq_c S^{\ast}$ so there is an induced subalgebra $L_R(S, \varphi) \subseteq L_R(S^{\ast}, \varphi)$. Furthermore, $L_R(S, \varphi) \subseteq L_R(S^{\ast}, \varphi^{\ast})$ satisfies that $L_R(S, \varphi)= L_R(S^{\ast}, \varphi^{\ast})$ if and only if $L_R(S, \varphi)$ is unital.
	\end{theorem}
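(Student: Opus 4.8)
The plan is to treat the three assertions in turn. That $S\subseteq_c S^{\ast}$: the inclusion $S\hookrightarrow S^{\ast}$ is injective, fixes $0$, and preserves every product, since the only products in $S^{\ast}$ not already in $S$ involve $\ast$ and leave the products internal to $S$ unchanged; thus $S\subseteq S^{\ast}$. Writing $E^{\ast}=E\cup\{\ast\}$, the inclusion $E\subseteq E^{\ast}$ is closed downwards: if $y\in E^{\ast}$ and $y\le x$ for some $x\in E$, then $y=fx$ with $f\in E^{\ast}$, so $y\in S$ and hence $y\in E$. By Corollary~\ref{downwardideal} this inclusion preserves finite covers and is tight, so $S\subseteq_c S^{\ast}$ by Definition~\ref{subinverse}, and the preceding theorem on $\subseteq_c$ inverse subsemigroups yields the homogeneous $G$-graded inclusion $L_R(S,\varphi)\subseteq L_R(S^{\ast},\varphi^{\ast})$ with $x\delta_g\mapsto x\delta_g$ (note $\varphi^{\ast}$ restricts to $\varphi$ on $S^{\times}$). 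The forward direction of the equivalence is then immediate: $L_R(S^{\ast},\varphi^{\ast})$ is unital by Lemma~\ref{unitize}, so if $L_R(S,\varphi)=L_R(S^{\ast},\varphi^{\ast})$ then $L_R(S,\varphi)$ is unital.

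For the converse, assume $L_R(S,\varphi)$ is unital. By Corollary~\ref{unitalcor} the generalized Boolean algebra $\mathcal T_c(E)=\mathcal O_c(T(E))$ has a top element, and since $T(E)$ has a basis of compact open sets this forces $T(E)$ to be compact. As $S\neq 0$ there is a nonzero idempotent, hence (Zorn) an ultrafilter, so $T(E)\neq\emptyset$; and $\{V_x\colon 0\neq x\in E\}$ covers $T(E)$ because every tight filter contains a nonzero idempotent. Compactness then gives $T(E)=V_{x_1}\cup\dots\cup V_{x_n}$ with $n\ge 1$ and all $x_i\neq 0$. By Corollary~\ref{downwardideal} and Lemma~\ref{openident}, the image of $\mathcal T_c(E)$ inside $\mathcal T_c(E^{\ast})$ is $\mathcal O_c(T(E^{\ast}))(Y)$ where $Y=\bigcup_{x\in E}V_x\subseteq T(E^{\ast})$; I claim $Y=T(E^{\ast})$. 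Indeed the unique filter of $E^{\ast}$ disjoint from $E$ is $\{\ast\}$, so it suffices to see that $\{\ast\}$ is not tight. Now $\{x_1,\dots,x_n\}$ is a finite cover of $\ast$ in $E^{\ast}$: given $0\neq z\in E^{\ast}$, if $z=\ast$ then $x_i\wedge\ast=x_i\neq 0$, and if $z\in E^{\times}$ then choosing an ultrafilter $\xi\ni z$ and using that $\xi$ is tight we get $x_i\in\xi$ for some $i$, whence $x_i z\in\xi$ is nonzero. Since no $x_i$ equals $\ast$, Theorem~\ref{ultratight}(2) shows $\{\ast\}$ is not tight, so $Y=T(E^{\ast})$ and therefore $T(E^{\ast})\in\mathcal T_c(E)$ (recalling $T(E^{\ast})=V_{\ast}$ is a basic compact open set).

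It remains to upgrade this to $L_R(S,\varphi)=L_R(S^{\ast},\varphi^{\ast})$. For $g\neq e$ the set $E_g$ computed in $S^{\ast}$ agrees with the one computed in $S$, because any $s\in S^{\ast}$ with $\varphi^{\ast}(s)=g\neq e$ lies in $S$; so among the generators $\{x\delta_g\colon g\in G,\ x\in E_g\}$ of $L_R(S^{\ast},\varphi^{\ast})$ supplied by Lemma~\ref{computationinverse}(1), the only ones not already lying in $L_R(S,\varphi)$ are of the form $y\delta_e$ with $y\in E^{\ast}$, and among those only $\ast\,\delta_e=V_{\ast}\delta_e=T(E^{\ast})\delta_e$ is new — which lies in $L_R(S,\varphi)$ by the preceding paragraph. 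Hence $L_R(S,\varphi)=L_R(S^{\ast},\varphi^{\ast})$. I expect the main obstacle to be exactly this middle step: reading off compactness of $T(E)$ from unitality via Corollary~\ref{unitalcor}, and then converting a finite compact subcover of $T(E)$ into a finite semilattice cover of $\ast$ in $E^{\ast}$ that witnesses $\{\ast\}$ being non-tight; the remaining work is bookkeeping with the identifications of Section~\ref{meetlatticetight}.
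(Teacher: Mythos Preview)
Your proof is correct and follows essentially the same approach as the paper's: both establish $S\subseteq_c S^{\ast}$ via the downward closure of $E\subseteq E^{\ast}$, dispose of the forward implication using Lemma~\ref{unitize}, and for the converse extract from Corollary~\ref{unitalcor} the compactness of $T(E)$, pass to a finite subcover $V_{x_1},\dots,V_{x_n}$, and verify that $\{x_1,\dots,x_n\}$ is a semilattice cover of $\ast$ in $E^{\ast}$ (your phrasing ``using that $\xi$ is tight'' is really just ``$\xi\in T(E)=\bigcup_i V_{x_i}$''). The only cosmetic difference is in the last step: the paper concludes directly that $V_{\ast}=\bigcup_i V_{x_i}^{E^{\ast}}\in\mathcal T_c(E)$ and hence that the two partial actions coincide, whereas you phrase the same computation as ``$\{\ast\}$ is not tight, so $Y=T(E^{\ast})$'' and then finish by noting the generators $x\delta_g$ of $L_R(S^{\ast},\varphi^{\ast})$ all lie in $L_R(S,\varphi)$; these are equivalent packagings of the same content.
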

	
	\begin{proof}
		We first show that $(S, \varphi) \subseteq_c (S^{\ast}, \varphi^{\ast})$ as in Definition~\ref{subinverse}.
		
		The inclusion $S \subseteq S^{\ast}$ is trivial. $E \subseteq E^{\ast}$ preserves finite covers because $E \subseteq E^{\ast}$ is downwards closed and so we can apply Corollary~\ref{downwardpreserve}. Thus, there is an induced subalgebra $L_R(S, \varphi) \subseteq L_R(S^{\ast}, S^{\ast})$. 
		
		If $L_R(S, \varphi)$ is not unital, then clearly $L_R(S, \varphi) \neq L_R(S^{\ast}, \varphi^{\ast})$ as $L_R(S^{\ast}, \varphi^{\ast})$ is unital by Lemma~\ref{unitize}.
		
		In the other direction, assume that $L_R(S, \varphi)$ is unital. We will show that the induced partial actions on their generalized Boolean algebras are actually the same. First note that \[E^{\ast}_g = \{s^{\ast}s \colon s \in S, \varphi^{\ast}(s) = g^{-1} \}^-\]
		If $g \neq e$, then $\varphi^{\ast}(s) = g^{-1}$ implies that $s \in S$. Because $s^{\ast}s \in S$ as well and $\ast$ is a maximal element in $S^{\ast}$, we get that $E^{\ast}_g = E_g$ for $g \neq e$. Hence, $\mathcal T_c(E^{\ast}_g) \cong \mathcal T_c(E_g)$.
		
		For the case $g=e$, we must prove that the subalgebra $ \mathcal T_c(E) \subseteq \mathcal T_c(E \cup \{\ast\})$ induced by $E \subseteq E^{\ast}$ is actually the whole set. We know that it is actually an ideal because $E \subseteq E^{\ast}$ is downwards closed. Hence, it suffices to prove that the unit $V_{\ast} \in \mathcal T_c(E)$.
		
		If $L_R(S, \varphi)$ is unital, by Corollary~\ref{unitalcor} this implies that $\mathcal T_c(E)$ contains a unit. Namely, the topological space $T(E)$ is compact. Because $\{V^E_x\}_{x \in E}$ covers $T(E)$ and $T(E)$ is compact, there is some finite set $0 \neq x_1, \ldots, x_n \in E$ with $n \geq 1$ (by $E \neq 0$) such that $\bigcup_{i=1}^n V^E_{x_i} = T(E)$. We will prove that $\{x_i\}_{i=1}^n$ forms a finite cover of $\ast$ in $E \cup \{\ast\}$. This suffices because it will imply that \[\bigcup_{i=1}^n V^{E^{\ast}}_{x_i} = V_{\ast}\] with $V^{E^{\ast}}_{x_i} \in \mathcal T_c(E)$.
		
		Let $0 \neq x \leq \ast$. If $ x = \ast$, then $x_1 \cdot \ast = x_1 \neq 0$. If $x \neq \ast$, then $x \in E$. Because $x$ is non-zero, there exists some ultrafilter $\xi$ in $E$ containing $x$. Because $\bigcup_{i=1}^n V^E_{x_i} = T(E)$, we have that there is some $i$ such that $\xi \in V_{x_i}$. With this $i$, we have that $x_i, x \in \xi$ and thus $x_i \wedge x \neq 0$, so we are done.
	\end{proof}
	
	\begin{definition}
		An ideal $I \subseteq R$ is an \textit{essential ideal} if for all $x \in R$, we have that $xI = Ix = 0 \Rightarrow x = 0$.
	\end{definition}

	\begin{theorem}
		$L_R(S, \varphi) \subseteq L_R(S^{\ast}, \varphi^{\ast})$ is an essential ideal.
	\end{theorem}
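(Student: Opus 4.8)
The plan is to establish two things separately: that $L_R(S,\varphi)$ sits inside $L_R(S^{\ast},\varphi^{\ast})$ as a two-sided ring ideal, and that this ideal is essential. For the second point I would in fact prove the stronger assertion that $L_R(S,\varphi)\,a=0$ already forces $a=0$ (the definition only assumes the weaker $aI=Ia=0$), which keeps the bookkeeping lighter.

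For the ideal property, I would begin by recalling, from the proof of Theorem~\ref{unitizethm}, that $E\subseteq E^{\ast}$ is downwards closed and that $E^{\ast}_g=E_g$ for every $g\neq e$; the latter gives $E^{\ast}_{g}\cap E=E_{g}$ for all $g$. I would also record the easy fact that each isomorphism $\phi^{\ast}_k$ maps $E$ into $E$: it is implemented by $z\mapsto szs^{\ast}$ with $s\in S$ when $k\neq e$ (since $\varphi^{\ast}(s)=k\neq 1_G$ forces $s\in S$) and is the identity when $k=e$. Since $L_R(S,\varphi)$ is $R$-spanned by $\{x\delta_g:g\in G,\ x\in E_g\}$ and $L_R(S^{\ast},\varphi^{\ast})$ by $\{y\delta_h:h\in G,\ y\in E^{\ast}_h\}$ (Lemma~\ref{computationinverse}), it suffices to check that $(y\delta_h)(x\delta_g)$ and $(x\delta_g)(y\delta_h)$ lie in $L_R(S,\varphi)$ for such generators. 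By Lemma~\ref{computationinverse}(4) each such product equals $\phi^{\ast}_k(w)\delta_{k'}$ for appropriate group elements $k,k'$, where $w$ is a meet of idempotents dominated by an element of $E$ (by $x$ in the first case, by $\phi^{\ast}_{g^{-1}}(x)\in E$ in the second), so $w\in E$ by downward closure and hence $\phi^{\ast}_k(w)\in E$; since the product lies in $L_R(S^{\ast},\varphi^{\ast})$ one also has $\phi^{\ast}_k(w)\in E^{\ast}_{k'}$, and therefore $\phi^{\ast}_k(w)\in E^{\ast}_{k'}\cap E=E_{k'}$. Thus both products lie in $L_R(S,\varphi)$.

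The core of the essentiality argument is the claim that $Y:=\bigcup_{x\in E}V_x\subseteq T(E^{\ast})$ is dense. I would prove this by showing that every ultrafilter of $E^{\ast}$ already lies in $Y$: since $S\neq 0$ forces $E\neq\{0\}$, the filter $\{\ast\}$ cannot be an ultrafilter (it would require $x\wedge\ast=x=0$ for every $0\neq x\in E$, against Theorem~\ref{ultratight}(1)), so any ultrafilter $\eta$ of $E^{\ast}$ contains an element of $E^{\ast}\setminus\{\ast\}=E$, necessarily nonzero, whence $\eta\in Y$. Consequently $U(E^{\ast})\subseteq Y$, and since $T(E^{\ast})=\overline{U(E^{\ast})}$ by definition, $\overline{Y}=T(E^{\ast})$.

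Finally, to conclude, I would take $a=\sum_{g\in G}a_g\delta_g\in L_R(S^{\ast},\varphi^{\ast})$ with $L_R(S,\varphi)\,a=0$ and test it against the elements $U\delta_e$ with $U\in\mathcal T_c(E)$, which all lie in $L_R(S,\varphi)$. Since $(U\delta_e)a=\sum_g(1_U a_g)\delta_g$ and $(1_U a_g)(r)=U\wedge a_g(r)$, this yields $U\wedge a_g(r)=0$ for all $r$ and all $U\in\mathcal T_c(E)$. For $g\neq e$ we have $a_g(r)\in\mathcal T_c(E^{\ast}_g)=\mathcal T_c(E_g)\subseteq\mathcal T_c(E)$, so taking $U=a_g(r)$ forces $a_g(r)=0$, hence $a_g=0$. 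For $g=e$, Lemma~\ref{openident} applies — $E\subseteq E^{\ast}$ is downwards closed, hence preserves finite covers and is tight by Corollaries~\ref{downwardpreserve} and~\ref{downwardideal} — and identifies $\mathcal T_c(E)$ with $\mathcal O_c(T(E^{\ast}))(Y)$; so the compact open set $a_e(r)\subseteq T(E^{\ast})$ is disjoint from every compact open subset of $Y$, and since $T(E^{\ast})$ has a basis of compact open sets this gives $a_e(r)\cap Y=\emptyset$. Density of $Y$ then forces the open set $a_e(r)$ to be empty, so $a_e=0$ and $a=0$. I expect the fussiest point to be the grading bookkeeping in the ideal step (keeping straight $g,h,gh,hg$ and the special role of $e$); once one notices that $Y$ is dense simply because it contains $U(E^{\ast})$, the rest of the argument is routine.
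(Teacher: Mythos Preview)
Your proof is correct and follows essentially the same route as the paper: both reduce the essentiality to the fact that $\{\ast\}$ is not an ultrafilter of $E^{\ast}$ (equivalently, that $Y=\bigcup_{x\in E}V_x$ contains $U(E^{\ast})$ and is hence dense), and both treat the $g\neq e$ components by observing $E^{\ast}_g=E_g$. The only cosmetic difference is in the ideal step: you verify closure on generator--generator products via Lemma~\ref{computationinverse}(4) and the observation $\phi^{\ast}_k(E)\subseteq E$, whereas the paper exploits that $\{y\delta_e\}_{y\in E}$ is a set of local units for $L_R(S,\varphi)$ and checks only products against these.
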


	\begin{proof}
		We first prove that $L_R(S, \varphi) \subseteq L_R(S^{\ast}, \varphi^{\ast})$ is an ideal. 
		
		By Lemma~\ref{computationinverse} (1), we know that $L_R(S^{\ast}, \varphi^{\ast})$ is generated as an $R$-algebra by \[\{x\delta_g\}_{g \in G, x \in E^{\ast}_g}\] Using Theorem~\ref{localunits} and the proof of Lemma~\ref{computationinverse} (1), we find that $L_R(S, \varphi)$ has a set of local units generated as a ring by $\{x\delta_e\}_{x \in E}$.
		
		It thus suffices to show that for all $g \in G, x \in E^{\ast}_g$ and $y \in E$, we have that \[(y \delta_e)(x \delta_g), (x\delta_g)(y\delta_e)\in L_R(S, \varphi)\]
		
		As in the proof of Theorem~\ref{unitizethm}, we have that $\mathcal T_c(E_g) \cong \mathcal T_c(E_g)$ for $g \neq e$. Namely, $L_R(S, \varphi)$ contains all homogenous elements $L_R(S^{\ast}, \varphi^{\ast})$ with grading $g \neq e$. Hence, if $g \neq e$, then $(y \delta_e)(x \delta_g), (x\delta_g)(y\delta_e)$ are either $0$ or in the grading $g$, thus are clearly in $L_R(S, \varphi)$. Thus, we assume that $g = e$. In this case, $(y\delta_e)(x\delta_e) = (x \delta_e)(y \delta_e) = (xy) \delta_e \in L_R(S, \varphi)$ clearly because $x \in E \Rightarrow xy \in E$.
		
		It remains to prove that $L_R(S, \varphi) \subseteq L_R(S^{\ast}, \varphi^{\ast})$ is essential. Let $x \in L_R(S^{\ast}, \varphi^{\ast})$ such that $L_R(S, \varphi)x = 0$. By a standard grading argument, we may assume without loss of generality that $x = f \delta_g$ for some $0 \neq f \in \mathrm{Lc}(R, \mathcal T_c(E^{\ast}_g))$. If $g \neq e$, then similarly $x \in L_R(S, \varphi)$ as proven earlier and because $L_R(S, \varphi)$ has local units $L_R(S, \varphi)x$ cannot equal $0$. Hence, again we can assume that $g = e$.
		
		Now write $f = \sum_{i=0}^{n} r_i U_i \delta_e$ for pairwise disjoint $\{U_i\}_{i=1}^n \in \mathcal T_c(E^{\ast})$. Note that for $V \in \mathcal T_c(E)$, we have that $V \delta_e \in L_R(S, \varphi)$ satisfies that $(V \delta_e)x = \sum_{i=1}^n r_i (U_i \wedge V) \delta_e$. Because the $U_i$ were disjoint, this is clearly non-zero if and only if at least one of the $i$ satisfies that $U_i \wedge V \neq 0$.
		
		Hence, it suffices to prove that, for any $\emptyset \neq U \in \mathcal T_c(E^{\ast})$, there exists some $V \in \mathcal T_c(E)$ such that $U \wedge V \neq 0$. Recall that the identification is given by the preimage of the partial map \[T(E^{\ast}) \dashrightarrow T(E)\] given by \[\xi \mapsto \xi \cap E \text{ whenever $\xi \cap E \neq \emptyset$}\]
		
		Using this, it's clear that $U$ has non-empty intersection with some $V \in \mathcal T_c(E)$ if there is some filter $\xi \in U$ such that $\xi \cap E \neq \emptyset$. Recall from the proof of Theorem~\ref{pullbackinj} that $U$ must contain an ultrafilter $\xi$ of $E^{\ast}$. Because $E^{\ast} \setminus E = \{\ast\}$, the only filter in $T(E^{\ast})$ with empty intersection with $E$ is $\{\ast\}$. As $E \neq \emptyset$, it's easy to see using Theorem~\ref{ultratight} that $\{\ast\}$ cannot be an ultrafilter. Hence, any ultrafilters in $T(E^{\ast})$ must have non-empty intersection with $E$, and so we are done.
		
	\end{proof}

	Finally, to summarize, we remove mentions of the pure grading.
	
	\begin{theorem}
		Let $S$ be a strongly $E^{\ast}$-unitary inverse semigroup. Let $S^{\ast}$ denote the inverse semigroup $S \cup \{\ast\}$ where $\ast$ is an added unit. Then $S^{\ast}$ is a strongly $E^{\ast}$-unitary inverse semigroup and $S \subseteq_c S^{\ast}$. Furthermore, the following holds:
		
		\begin{enumerate}
			\item $L_R(S^{\ast})$ is a unital $R$-algebra
			\item There is a natural inclusion $L_R(S) \subseteq L_R(S^{\ast})$
			\item $L_R(S) = L_R(S^{\ast})$ if and only if $L_R(S)$ is unital
			\item $L_R(S) \subseteq L_R(S^{\ast})$ is an essential ideal
		\end{enumerate}
	\end{theorem}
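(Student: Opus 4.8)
The plan is to package the four assertions from the results of this subsection, the only genuinely new ingredient being that $L_R(-)$ is independent of the chosen pure grading (the corollary following Theorem~\ref{isoswitch}). Throughout one works, as in Theorem~\ref{unitizethm}, under the standing assumption $S \neq 0$, so that $E \neq 0$; the degenerate case $S = 0$ is vacuous and is in any event excluded by the convention discussed just before Theorem~\ref{unitizethm}.

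First I would recall the two structural inputs. The discussion preceding Lemma~\ref{unitize} shows that $S^{\ast} = S \cup \{\ast\}$ is a strongly $E^{\ast}$-unitary inverse semigroup, with pure grading $\varphi^{\ast} \colon (S^{\ast})^{\times} \to G$ extending any pure grading $\varphi \colon S^{\times} \to G$ by $\varphi^{\ast}(\ast) = 1_G$. Theorem~\ref{unitizethm} then shows $S \subseteq_c S^{\ast}$, using that $E \subseteq E^{\ast}$ is downward closed and hence preserves finite covers by Corollary~\ref{downwardpreserve}; this supplies the homogeneous $G$-graded embedding $L_R(S, \varphi) \hookrightarrow L_R(S^{\ast}, \varphi^{\ast})$ sending $x\delta_g \mapsto x\delta_g$.

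With these in hand the four items follow immediately: (1) is Lemma~\ref{unitize} combined with the grading-independence corollary, so that $L_R(S^{\ast})$ is unital no matter which pure grading is used to compute it; (2) is the embedding just recalled; (3) is the equivalence established in Theorem~\ref{unitizethm}, together with Corollary~\ref{unitalcor}; and (4) is the theorem immediately preceding the present one, that $L_R(S, \varphi) \subseteq L_R(S^{\ast}, \varphi^{\ast})$ is an essential ideal. The concluding step is cosmetic: fix one pure grading on $S$ (for instance the universal one of Theorem~\ref{universalgroup}) together with the induced $\varphi^{\ast}$ on $S^{\ast}$, and transport the statements along the isomorphisms furnished by the corollary following Theorem~\ref{isoswitch}.

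The only point that requires a moment's care — and the one I would single out as the main, albeit mild, obstacle — is checking that the inclusion $L_R(S) \subseteq L_R(S^{\ast})$ is intrinsic, i.e.\ independent of the grading chosen. Concretely one must verify that the grading-switching isomorphisms of Theorem~\ref{isoswitch}, applied on $S$ and on $S^{\ast}$, fit into a commuting square with the two embeddings $L_R(S, \varphi_i) \hookrightarrow L_R(S^{\ast}, \varphi_i^{\ast})$; this reduces to the compatibility of the universal groups $G(S)$ and $G(S^{\ast})$ under the inclusion $S \hookrightarrow S^{\ast}$, a routine consequence of the universal property in Theorem~\ref{universalgroup}. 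Everything else is a direct citation of the preceding results.
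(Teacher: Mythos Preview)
Your proposal is correct and matches the paper's approach: this theorem is presented in the paper explicitly as a summary (``Finally, to summarize, we remove mentions of the pure grading''), with no separate proof, so the intended argument is precisely the packaging of Lemma~\ref{unitize}, Theorem~\ref{unitizethm}, and the essential-ideal theorem together with the grading-independence corollary. Your observation about the compatibility square for the grading-switching isomorphisms is a point the paper leaves implicit, but it is indeed routine and your handling of it is fine.
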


	\section{Proofs of Isomorphism}\label{isomorphism}
	
	We now prove that Leavitt path algebras and labelled Leavitt path algebras can be realized as the algebra of a strongly $E^{\ast}$-unitary inverse semigroup associated to each combinatorial object. Note that the fact that the algebras are isomorphic can be proven by applying Theorem~\ref{thm:isogroupoid} and using known realizations of these algebras as Steinberg algebras. However, we apply our construction here in detail as reference for future calculations.
	
	\subsection{Leavitt Path Algebras}
	
	In this section, we only describe the inverse semigroup with respect to our construction. We keep the section for reference, however we omit the proofs as they involve the same ideas as the labelled Leavitt path algebra case found in Section~\ref{labelledleavittiso}. All definitions for Leavitt path algebras can be found in the textbook by Abrams \cite{Abrams2017}.
	
	\begin{definition}[Directed Graph Inverse Semigroups  \cite{Ash1975}]
		For a graph $G = (E^0, E^1)$, we define an inverse semigroup \[S_G \coloneqq  \{(p_1, p_2) \in E^{\ast} \times E^{\ast} \colon r(p_1) = s(p_2)\} \cup \{0\}\] where $E^{\ast}$ denotes the set of finite length paths (possibly just a vertex) in $G$.
		
		Multiplication is defined as 
		
		\[(a_1, a_2)\cdot(b_1, b_2) = \begin{cases}
			(a_1, b_2) & \text{ if } a_2 = b_1 \\
			(a_1b_1', b_2) &\text{ if } b_1 = a_2b_1' \\
			(a_1, b_2a_2') & \text{ if } a_2 = b_1a_2' \\
			0 & \text{otherwise}
		\end{cases}\]
		
		with $(a, b)^{\ast} = (b, a)$ and the set of idempotents
		
		\[E = \{(p, p) \in S\} \cup \{0\}\]
		
		The meet semilattice of idempotents has a natural order \[(p_1, p_1) \leq (p_2, p_2) \Leftrightarrow p_2 \text{ is a prefix of } p_1\]
		
		The inverse semigroup has a pure grading $\varphi: S_G^{\times} \rightarrow \mathbb F[E^1]$ defined by taking \[(p_1, p_2) \mapsto p_1 p_2^{-1}\] where we view a path $p = e_1e_2\ldots e_n \in \mathbb F[E^1]$ and vertices as the empty word.
		
		In what follows, whenever we represent $g \in \mathbb F[E^1]$ we assume that the representations are reduced. Furthermore, assume that $g \neq e$ as that information is built into the definition.

		The sets $\varphi^{-1}(g)$ are:
		
		\[\varphi^{-1}(g) = \begin{cases} 
			\{(p_1p, p_2p) \colon  s(p) = r(p_1) \} \cup \{0\} & \text{ if }g = p_1 p_2^{-1} \text{ where } p_1, p_2 \in E^{\geq 1} \text{ and } r(p_1) = r(p_2) \\
			\{(p_1p, p) \colon r(p_1) = s(p) \} \cup \{0\}& \text{ if }g = p_1 \text{ where } p_1 \in E^{\geq 1} \\
			\{(p, p_1p) \colon s(p) = r(p_1)\} \cup \{0\} & \text{ if }g = p_1^{-1} \text{ where } p_1 \in E^{\geq 1}\\
			\{0\} & \text{otherwise}
		\end{cases}\]

		The sets $E_g$ are:
		
		\[E_g = \begin{cases} 
			\{(p, p) \colon p_1 \text { is a prefix of } p \} \cup \{0\} & \text{ if }g = p_1 p_2^{-1} \text{ where } p_1, p_2 \in E^{\geq 1} \text{ and } r(p_1) = r(p_2) \\
			\{(p, p) \colon p_1 \text { is a prefix of } p\} \cup \{0\}& \text{ if }g = p_1 \text{ where } p_1 \in E^{\geq 1} \\
			\{(p, p) \colon s(p) = r(p_1)\} \cup \{0\} & \text{ if }g = p_1^{-1} \text{ where } p_1 \in E^{\geq 1}\\
			\{0\} & \text{otherwise}
		\end{cases}\]
		
		With this pure grading, $(S, \varphi)$ orthogonal and semi-saturated.
		
		The maps $\phi_{g^{-1}}: E(S)_{g} \rightarrow E(S)_{g^{-1}}$ act in the following manner:
		
		\[\phi_{g^{-1}}(p, p) = \begin{cases} 
			(pp', pp') \text{ with } p = p_1p' & \text{ if }g = p_1 p_2^{-1} \text{ where } |p_1|, |p_2| \geq 1 \\
			(p', p') \text { with } p= p_1p'  & \text{ if }g = p_1 \text{ where } |p_1| \geq 1 \\
			(p_1p, p_1p) & \text{ if } g = p_1^{-1} \text{ where } |p_1| \geq 1 \\
		\end{cases}\]

		The isomorphism from the Leavitt Algebra $L_R(G)$ with generators $\{v\}_{v \in E^0} \cup \{s_p, s_p^{\ast}\}_{p \in E^1}$ to the $R$-algebra $L_R(S_G, \varphi)$ is given by the map on generators\[v \mapsto(v, v) \delta_e, s_p \mapsto (p, p)\delta_p, \text{ and } s^{\ast}_p \mapsto (r(p), r(p))\delta_{p^{-1}}\]
		
	\end{definition}
	
	\subsection{Labelled Leavitt Path Algebras}	\label{labelledleavittiso}
	We first define labelled spaces and their associated labelled Leavitt Path algebras. All definitions can be found in \cite{Boava2021LeavittPA}.
	
	A labelled graph is a directed graph $\mathcal E = (\mathcal E^0, \mathcal E^1)$ paired with a labeling $\mathcal L: \mathcal E^1 \rightarrow \mathcal A$ where $\mathcal A$ is an alphabet. Labelling obviously extends to finite length paths in $\mathcal E^{\ast}$.
	
	We denote $\mathcal L^{\ast} = \mathcal L(\mathcal E^{\ast})$. For some subset $A \subseteq \mathcal E^0$ and some $\alpha \in \mathcal L^{\ast}$, we denote $r(A, \alpha)$ to be the range of paths $p = e_1\ldots e_{|\alpha|}$. More precisely, $r(A, \alpha) = \{r(p) \colon p \in \mathcal E^{\ast}, \mathcal L(p) = \alpha\}$. We define $r(\alpha) = r(\mathcal E^0, \alpha)$.
	
	Let $\mathcal B$ be a set of subsets of $\mathcal E^0$ closed under finite unions and intersections that contains $r(\alpha)$ for all $\alpha \in \mathcal L^{\ast}$ and such that for all $A \in \mathcal B$ and $\alpha \in \mathcal L^{\ast}$ we have that $r(A, \alpha) \in \mathcal B$.
	
	We call a triplet $(\mathcal E, \mathcal L, \mathcal B)$ is a \textit{labelled space}.
	
	A labelled space is \textit{weakly left-resolving} if for all $A, B \in \mathcal B$ and $\alpha \in \mathcal L^{\ast}$, we have that $r(A \cap B, \alpha) = r(A, \alpha) \cap r(B, \alpha)$. A labelled space is \textit{normal} if $\mathcal B$ is closed under relative complements. We assume that all labelled spaces considered henceforth are weakly left-resolving and normal and just refer to them as labelled spaces.
	
	For $\alpha \in \mathcal L^{\ast}$, we define $\mathcal B_{\alpha} = \mathcal B \cap \mathcal P(r(\alpha))$. Define $\mathcal B_{\omega} = \mathcal B$. $\mathcal B_{\alpha}$ is an ideal in of $\mathcal B$.
	
	For $A \in \mathcal B$, define $\Delta_A = \{a \in \mathcal A \colon r(A, a) \neq \emptyset\}$. Let $\mathcal E_{\text{sink}}$ be the sinks of $\mathcal E$ (as a directed graph). Note that $\mathcal E_{\text{sink}}$ is not necessarily in $\mathcal B$.
	
	For $A \in \mathcal B$, call $A$ be \textit{regular} if $0 < |\Delta_A| < \infty$ and there exists no $\emptyset \neq B \in \mathcal B$ such that $B \subseteq A \cap \mathcal E_{\text{sink}}$. If $A$ is not regular, we refer to it as \textit{singular}. We denote $\mathcal B_{\text{reg}} \subseteq \mathcal B$ to be the regular sets. $\mathcal B_{\text{reg}}$ is an ideal of $\mathcal B$.
	
	The associated labelled Leavitt path algebra $L_R(\mathcal E, \mathcal L, \mathcal B)$ is defined as the associative $R$-algebra on the generators $\{p_A\}_{A \in \mathcal B} \cup \{s_a, s_a^{\ast}\}_{a \in \mathcal A}$ that satisfy the following relations, which are often known as the Cuntz-Krieger relations.
	
	\begin{enumerate}
		\item $p_{A \cap B} = p_A p_B$, $p_{A \cup B} = p_{A} + p_B - p_{A \cap B}$, $p_{\emptyset} = 0$ for $A, B \in \mathcal B$
		\item $p_A s_a = s_a p_{r(A, a)}$ and $s_a^{\ast}p_A = p_{r(A, a)}s_a^{\ast}$ for $A \in \mathcal B$ and $a \in \mathcal A$
		\item $s_a^{\ast}s_b = \delta_{a, b} p_{r(a)}$ for $a, b \in \mathcal A$
		\item $s_a s_a^{\ast} s_a = s_a$ and $s_a^{\ast}s_as_a^{\ast} = s_a^{\ast}$ for $a \in \mathcal L$.
		\item For all $A \in \mathcal B_{\text{reg}}$, $p_A = \sum_{a \in \Delta_A} s_a p_{r(A, a)}s_a^{\ast}$
	\end{enumerate}

	\begin{definition}[Labelled Leavitt Path Algebra Semigroups]
		
		For a normal weakly left-resolving labelled space $(\mathcal E, \mathcal L, \mathcal B)$ define its inverse semigroup (\cite{Boava17Inverse}) \[S_{(\mathcal E, \mathcal L, \mathcal B)} \coloneqq \{(\alpha, A, \beta) \colon \alpha, \beta \in \mathcal L^{\ast} \text{ and } \emptyset \neq A \in \mathcal \mathcal B_{\alpha} \cap \mathcal \mathcal B_{\beta}\}\cup \{0\}\]
		
		Multiplication is defined as 
		
		\[(\alpha, A, \beta)\cdot(\gamma, B, \delta) = \begin{cases}
			(\alpha, A \cap B, \delta) & \text{ if } \beta = \gamma \\
			(\alpha\gamma', r(A, \gamma') \cap B, \delta) &\text{ if } \gamma = \beta \gamma' \text{ and } r(A, \gamma') \cap B \neq \emptyset \\
			(\alpha, A \cap r(B, \beta'), \delta \beta') & \text{ if } \beta = \gamma \beta' \text{ and } A \cap r(B, \beta') \neq \emptyset \\
			0 & \text{otherwise}
		\end{cases}\]
		with $(\alpha, A, \beta)^{\ast} = (\beta, A, \alpha)$ and the set of idempotents 
		
		\[E = \{(\alpha, A, \alpha) \colon \alpha \in \mathcal L^{\ast} \text{ and } \emptyset \neq A \in \mathcal B_{\alpha}\}\cup\{0\}\]
		The meet semilattice of idempotents has a natural order 
		
		\[(\alpha, A, \alpha) \leq (\beta, B, \beta) \Leftrightarrow \alpha = \beta \alpha' \text { and } A \subseteq r(B, \alpha')\]
		
		The inverse semigroup has a pure grading $\varphi: S_{(\mathcal E, \mathcal L, \mathcal B)}^{\times} \rightarrow \mathbb F[\mathcal A]$ defined by taking \[(\alpha, A, \beta) \mapsto \alpha \beta^{-1}\] where we view $\alpha = \alpha_1 \alpha_2\ldots \alpha_n \in \mathbb F[\mathcal A]$. 
		
		In the following, we assume that representations for $g \in \mathbb F[\mathcal A]$ are reduced.
		
		The sets $\varphi^{-1}(g)$ are:
		\[\varphi^{-1}(g) = \begin{cases} 
			\{(p_1p, A, p_2p) \colon  p \in \mathcal L^{\ast}, \emptyset \neq A \in \mathcal B_{p_1p} \cap \mathcal B_{p_2p}\}& \text{ if } g = p_1 p_2^{-1} \text{ where } p_1, p_2 \in \mathcal L^{\ast} \\
			\emptyset & \text{otherwise}
		\end{cases}\]
		
		The sets $E_{g}$ are:
		
		\[E_{g} = \begin{cases} 
			\{(p_1p, A, p_1p) \colon  p \in \mathcal L^{\ast}, \emptyset \neq A \in \mathcal B_{p_1p} \cap \mathcal B_{p_2p}\} \cup \{0\} & \text{ if } g = p_1 p_2^{-1} \text{ where } p_1, p_2 \in \mathcal L^{\ast} \\
			\{0\} & \text{otherwise}
		\end{cases}\]
		
		The maps $\phi_{g^{-1}}: E_g \rightarrow E_{g^{-1}}$ act in the following manner for $g = p_1p_2^{-1}$:
		
		\[\phi_{g^{-1}}(p_1p, A, p_1p) = (p_2p, A, p_2p)\]
		
		With this pure grading, $(S, \varphi)$ is orthogonal and semi-saturated.
		
		The isomorphism from the labelled Leavitt path algebra $L_R(\mathcal E, \mathcal L, \mathcal B)$ with generators $\{p_B\}_{B \in \mathcal B} \cup \{s_{a}, s_{a}^{\ast}\}_{a \in \mathcal A}$ to $L_R(S_{\mathcal E, \mathcal L, \mathcal B})$ is given by the map on generators 		\[p_B \mapsto (\omega, B, \omega)\delta_e, s_{a} \mapsto (a, r(a), a)\delta_a, \text{ and } s^{\ast}_a \mapsto (\omega, r(a), \omega)\delta_{a^{-1}}\]
	\end{definition}
	
	\begin{lemma}
		$S_{(\mathcal E, \mathcal L, \mathcal B)}$ is an inverse semigroup with the given idempotents and order.
	\end{lemma}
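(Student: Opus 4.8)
The plan is to verify the semigroup axioms for $S_{(\mathcal E,\mathcal L,\mathcal B)}$ directly, then obtain the inverse-semigroup structure from the classical fact that a regular semigroup with commuting idempotents is an inverse semigroup, and finally read off the idempotents and the natural order from the multiplication rule. I would proceed in the order: (i) the product is well-defined; (ii) the product is associative; (iii) $(\alpha,A,\beta)^{\ast}=(\beta,A,\alpha)$ makes $S_{(\mathcal E,\mathcal L,\mathcal B)}$ regular; (iv) the idempotents are exactly $\{(\alpha,A,\alpha)\}\cup\{0\}$ and they commute; (v) the natural order restricts to the stated order. For (i), the four cases in the definition are mutually exclusive and exhaustive because they are the trichotomy ``$\beta=\gamma$'', ``$\gamma$ a proper prefix of $\beta$'', ``$\beta$ a proper prefix of $\gamma$'', ``$\beta,\gamma$ incomparable'' for the prefix order on $\mathcal L^{\ast}$; and when the product is nonzero the output triple again satisfies $\emptyset\neq C\in\mathcal B_{\gamma'}\cap\mathcal B_{\delta'}$, using closure of $\mathcal B$ under intersection in the first case, and in the second case the facts that $r(A,\gamma')\in\mathcal B$ and $r(A,\gamma')\subseteq r(\alpha\gamma')$ so $r(A,\gamma')\cap B\in\mathcal B_{\alpha\gamma'}\cap\mathcal B_{\delta}$, nonemptiness being built into the case hypothesis; the third case is the $\ast$-mirror of the second.

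For (ii) I would compute both bracketings of $(\alpha,A,\beta)(\gamma,B,\delta)(\epsilon,C,\zeta)$ and check they agree. The outcome is governed by how $\beta$ compares to $\gamma$ and, after the first multiplication, how the middle word produced compares to $\epsilon$; once these prefix relations are fixed, both bracketings collapse to a common normal form $(\alpha\mu,D,\zeta\nu)$ with $\mu,\nu\in\mathcal L^{\ast}$ possibly empty and $D$ an iterated range-and-intersection of $A,B,C$. The only identities needed to force the two normal forms to coincide are associativity of concatenation in $\mathcal L^{\ast}$, the range identity $r(r(A',\mu),\nu)=r(A',\mu\nu)$ (immediate from the definition of $r(-,-)$), and the weakly-left-resolving identity $r(A'\cap B',\mu)=r(A',\mu)\cap r(B',\mu)$, which lets one push ranges through intersections. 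The one point that needs a line of care is that the set-component $D$ vanishes in one bracketing iff it vanishes in the other, so that both products equal $0$ simultaneously; this is again immediate from weak left-resolvingness. I expect this verification --- essentially the product of the relevant trichotomies, with $\ast$-symmetric cases treated once --- to be the main obstacle, although each individual case is a short computation. (One could instead realize each $(\alpha,A,\beta)$ as a partial bijection on a suitable path space and invoke associativity of composition of relations, but the bookkeeping is of the same order.)

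Once associativity holds the remaining steps are quick. The element $0$ is absorbing, and for $s=(\alpha,A,\beta)$ one checks $ss^{\ast}=(\alpha,A,\alpha)$, $s^{\ast}s=(\beta,A,\beta)$, hence $ss^{\ast}s=s$ and $s^{\ast}ss^{\ast}=s^{\ast}$, so $S_{(\mathcal E,\mathcal L,\mathcal B)}$ is regular. If $(\alpha,A,\beta)^{2}=(\alpha,A,\beta)\neq 0$, the second and third cases of the multiplication rule produce a strictly longer first or third coordinate, which is impossible, so $\alpha=\beta$; conversely every $(\alpha,A,\alpha)$ is idempotent, giving $E=\{(\alpha,A,\alpha)\}\cup\{0\}$. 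A direct check of the two possible cases shows $(\alpha,A,\alpha)(\beta,B,\beta)=(\beta,B,\beta)(\alpha,A,\alpha)$ for all $\alpha,\beta,A,B$ (both equal $(\gamma,A'\cap B',\gamma)$, where $\gamma$ is the longer of $\alpha,\beta$ and $A',B'$ are $A,B$ transported along the relevant suffix), so idempotents commute; by the classical theorem this makes $S_{(\mathcal E,\mathcal L,\mathcal B)}$ an inverse semigroup with $s^{\ast}=(\beta,A,\alpha)$ the unique inverse of $s$.

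Finally, to recover the stated order it is enough, since the natural order on an inverse semigroup restricts to $e\leq f\iff e=fe$ on idempotents, to evaluate $(\beta,B,\beta)(\alpha,A,\alpha)$ and observe it equals $(\alpha,A,\alpha)$ exactly when $\alpha=\beta\alpha'$ for some $\alpha'\in\mathcal L^{\ast}$ with $A\subseteq r(B,\alpha')$, which is precisely the claimed description. This completes the plan; the only genuinely laborious ingredient is the case analysis in step (ii).
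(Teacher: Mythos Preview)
Your plan is correct and complete: the well-definedness check, the associativity case analysis driven by the weakly-left-resolving identity $r(A'\cap B',\mu)=r(A',\mu)\cap r(B',\mu)$ together with $r(r(A',\mu),\nu)=r(A',\mu\nu)$, the regularity via $(\alpha,A,\beta)^{\ast}=(\beta,A,\alpha)$, commutativity of idempotents, and the computation of the natural order are exactly the ingredients needed, and each step goes through as you describe. The paper does not prove this lemma at all but simply cites \cite[Section~3]{Boava17Inverse}, where the verification you outline is carried out; so your approach is not different from the paper's---it \emph{is} the proof the paper is deferring to.
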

	\begin{proof}
		{\cite[Section~3]{Boava17Inverse}}
	\end{proof}
	
	\begin{lemma}
		There is a pure grading $\varphi: S_{(\mathcal E, \mathcal L, \mathcal B)}^{\times} \rightarrow \mathbb F[\mathcal A]$ defined by \[(\alpha, A, \beta) \mapsto \alpha \beta^{-1}\]
	\end{lemma}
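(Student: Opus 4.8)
The plan is to verify directly the two defining properties of a pure grading from Definition~\ref{strongunit}: multiplicativity on nonzero products, and $\varphi^{-1}(1_{\mathbb F[\mathcal A]}) = E^{\times}$. Since $\alpha$ and $\beta$ in a triplet $(\alpha, A, \beta)$ are honest words over the alphabet $\mathcal A$, the expression $\alpha\beta^{-1}$ is an unambiguous element of $\mathbb F[\mathcal A]$ (obtained via the canonical monoid map $\mathcal A^{\ast}\to\mathbb F[\mathcal A]$), so $\varphi$ is well-defined as a set map $S_{(\mathcal E,\mathcal L,\mathcal B)}^{\times}\to\mathbb F[\mathcal A]$; it is irrelevant that $\alpha\beta^{-1}$ may require free-group cancellation before it is in reduced form, or that $\varphi$ collapses triplets differing only in the middle coordinate.

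For multiplicativity I would run through the three cases of the multiplication table of $S_{(\mathcal E,\mathcal L,\mathcal B)}$ in which the product $(\alpha,A,\beta)(\gamma,B,\delta)$ is nonzero; the fourth case gives $0$ and is not required. Each reduces to a one-line free-group computation. When $\beta=\gamma$, the product is $(\alpha, A\cap B, \delta)$ and $\alpha\beta^{-1}\gamma\delta^{-1}=\alpha\beta^{-1}\beta\delta^{-1}=\alpha\delta^{-1}$. When $\gamma=\beta\gamma'$, the product is $(\alpha\gamma', r(A,\gamma')\cap B, \delta)$ and $\alpha\beta^{-1}\gamma\delta^{-1}=\alpha\beta^{-1}\beta\gamma'\delta^{-1}=(\alpha\gamma')\delta^{-1}$. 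When $\beta=\gamma\beta'$, the product is $(\alpha, A\cap r(B,\beta'), \delta\beta')$ and $\alpha\beta^{-1}\gamma\delta^{-1}=\alpha(\gamma\beta')^{-1}\gamma\delta^{-1}=\alpha(\beta')^{-1}\delta^{-1}=\alpha(\delta\beta')^{-1}$. In all three cases $\varphi((\alpha,A,\beta)(\gamma,B,\delta))=\varphi(\alpha,A,\beta)\,\varphi(\gamma,B,\delta)$.

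For purity, $\alpha\beta^{-1}=1_{\mathbb F[\mathcal A]}$ forces $\alpha=\beta$: both words are positive, hence already reduced, and a positive word times the inverse of a positive word is trivial in the free group only when the two words coincide. Conversely $\varphi(\alpha,A,\alpha)=\alpha\alpha^{-1}=1$. Since the nonzero idempotents of $S_{(\mathcal E,\mathcal L,\mathcal B)}$ are exactly the triplets $(\alpha,A,\alpha)$ with $\emptyset\neq A\in\mathcal B_{\alpha}$, and when $\alpha=\beta$ the constraint $A\in\mathcal B_{\alpha}\cap\mathcal B_{\beta}$ collapses to $A\in\mathcal B_{\alpha}$, we obtain $\varphi^{-1}(1_{\mathbb F[\mathcal A]})=\{(\alpha,A,\alpha):\emptyset\neq A\in\mathcal B_{\alpha}\}=E^{\times}$, which completes the verification.

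I do not expect a genuine obstacle here; the only care needed is in the case bookkeeping of the multiplication table — in particular keeping straight which of $\gamma',\beta'$ is a suffix of which word so the cancellations land on the correct side — while the well-definedness point is a non-issue since equality in $\mathbb F[\mathcal A]$ does not depend on the chosen spelling of the element.
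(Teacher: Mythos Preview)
Your proof is correct and follows essentially the same approach as the paper: verify multiplicativity by casework on the three nonzero branches of the multiplication table, then check that $\varphi^{-1}(1_{\mathbb F[\mathcal A]})=E^{\times}$ by noting that $\alpha\beta^{-1}=1$ forces $\alpha=\beta$. The paper omits the explicit computation in the third case (``The last case is the same''), whereas you write it out, but the argument is otherwise identical.
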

	\begin{proof}
		We need to prove that $\varphi(ab) = \varphi(a)\varphi(b)$ when $ab \neq 0$ and that $\varphi^{-1}(\omega) = E^{\times}$
		
		Let $(\alpha, A, \beta), (\gamma, B, \beta) \in S$. We will do casework on the multiplication.
		
		Assume that $(\alpha, A, \beta) \cdot (\gamma, B, \delta) \neq 0$. Note that $\varphi(\alpha, A, \beta) = \alpha \beta^{-1}$ and $\varphi(\gamma, B, \delta) = \gamma \delta^{-1}$ Then there are three cases.
		
		If $\beta = \gamma$, then $\varphi((\alpha, A, \beta) \cdot (\gamma, B, \delta)) = \varphi((\alpha, A \cap B, \delta)) = \alpha \delta^{-1} = \alpha \beta^{-1} \gamma =\delta^{-1} = \varphi(\alpha, A, \beta) \varphi(\gamma, B, \delta)$.
		
		If $\gamma = \beta \gamma'$ and $r(A, \gamma') \cap B \neq \emptyset$, $\varphi((\alpha, A, \beta) \cdot (\gamma, B, \delta)) = \varphi((\alpha \gamma', r(A, \gamma'), \delta)) = \alpha \gamma' \delta^{-1} = \alpha \beta^{-1} \beta \gamma' \delta^{-1} = \alpha \beta^{-1} \gamma \delta^{-1}= \varphi(\alpha, A, \beta) \varphi(\gamma, B, \delta)$.
		
		The last case is the same.
		
		To show that $\varphi^{-1}(\omega) = E^{\times}$, note that $(\alpha, A, \beta) \in S^{\times}$ satisfying $\varphi(s) = \omega$ is exactly equivalent to $\alpha \beta^{-1} = \omega$. But this happens if and only if $\alpha = \beta$. $(\alpha, A, \beta) \in E^{\times}$, so we are done.
	\end{proof}
	
	\begin{lemma}
		For $g = p_1p_2^{-1}$ with $p_1, p_2 \in \mathcal L^{\ast}$ and $p_1p_2^{-1}$ reduced, our construction is described as follows:
		
		\[\varphi^{-1}(g) =  \{(p_1p, A, p_2p) \colon  p \in \mathcal L^{\ast}, \emptyset \neq A \in \mathcal B_{p_1p} \cap \mathcal B_{p_2p}\}\]
		\[E_g = 	\{(p_1p, A, p_1p) \colon  p \in \mathcal L^{\ast}, \emptyset \neq A \in \mathcal B_{p_1p} \cap \mathcal B_{p_2p}\} \cup \{0\} \]
		\[\phi_{g^{-1}}: E_g \rightarrow E_{g^{-1}} \text{ takes } (p_1p, A, p_1p) \mapsto (p_2p, A, p_2p)\]
		
		For all other $g$, the resulting elements are trivial.
	\end{lemma}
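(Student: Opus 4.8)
The plan is to read off all three objects directly from the definition of $S_{(\mathcal E,\mathcal L,\mathcal B)}$, its multiplication table, and the order relation on $E$, the only substantive ingredient being a normal-form computation in $\mathbb F[\mathcal A]$. We may assume $g\neq e$ throughout, since $E_e=E$ and $\phi_e=\mathrm{Id}$ are already recorded.

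First I would compute $\varphi^{-1}(g)$. By definition an element $(\alpha,A,\beta)\in S^{\times}$ lies in $\varphi^{-1}(g)$ precisely when $\alpha,\beta\in\mathcal L^{\ast}$, $\emptyset\neq A\in\mathcal B_\alpha\cap\mathcal B_\beta$, and $\alpha\beta^{-1}=g$ in $\mathbb F[\mathcal A]$. Writing $v$ for the longest common suffix of the positive words $\alpha$ and $\beta$, say $\alpha=uv$ and $\beta=wv$ (so $u$ and $w$ have distinct last letters, or one of them is empty), the free-group product reduces as $\alpha\beta^{-1}=uvv^{-1}w^{-1}=uw^{-1}$, and this expression is already reduced. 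By uniqueness of reduced words, $\alpha\beta^{-1}=p_1p_2^{-1}$ with $p_1p_2^{-1}$ reduced forces $u=p_1$ and $w=p_2$, i.e.\ $\alpha=p_1p$ and $\beta=p_2p$ with $p=v\in\mathcal L^{\ast}$; conversely any such pair gives $\alpha\beta^{-1}=p_1pp^{-1}p_2^{-1}=p_1p_2^{-1}=g$. (Note that the requirements $\alpha,\beta\in\mathcal L^{\ast}$ are already subsumed: if $\emptyset\neq A\in\mathcal B_{p_1p}\cap\mathcal B_{p_2p}$ then $r(p_1p)$ and $r(p_2p)$ are nonempty, so $p_1p,p_2p\in\mathcal L^{\ast}$.) Since the reduced form of any product $\alpha\beta^{-1}$ of positive words is again of the shape (positive word)$\cdot$(inverse of a positive word), the same argument shows $\varphi^{-1}(g)=\emptyset$ for every reduced $g$ not of the stated form, whence $E_g=\{0\}$ and $\phi_g$ is trivial there.

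Next I would obtain $E_g$. Recall $E_g$ is the downward closure in $E$ of $\{ss^{\ast}:s\in\varphi^{-1}(g)\}$, and for $s=(p_1p,A,p_2p)$ the first clause of the multiplication rule gives $ss^{\ast}=(p_1p,A,p_2p)(p_2p,A,p_1p)=(p_1p,A,p_1p)$. So it suffices to check that the set $\{(p_1p,A,p_1p):p\in\mathcal L^{\ast},\ \emptyset\neq A\in\mathcal B_{p_1p}\cap\mathcal B_{p_2p}\}$ is already closed downward in $E$. An idempotent below $(p_1p,A,p_1p)$ has, by the order relation, the form $(p_1pq,A',p_1pq)$ with $\emptyset\neq A'\subseteq r(A,q)$; since $A\subseteq r(p_2p)$ and $r(r(p_2p),q)=r(p_2pq)$ we get $A'\subseteq r(p_2pq)$, so $A'\in\mathcal B_{p_1pq}\cap\mathcal B_{p_2pq}$ and the element again lies in the set with $p$ replaced by $pq$. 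This yields the asserted description of $E_g$.

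Finally, for $\phi_{g^{-1}}\colon E_g\to E_{g^{-1}}$ I would invoke Theorem~\ref{idempotentaction}: given $x=(p_1p,A,p_1p)\in E_g$, choose $t=(p_2p,A,p_1p)$, observe $\varphi(t)=(p_2p)(p_1p)^{-1}=p_2p_1^{-1}=g^{-1}$ and $t^{\ast}t=(p_1p,A,p_2p)(p_2p,A,p_1p)=(p_1p,A,p_1p)=x$, so that $x\le t^{\ast}t$, and compute
\[\phi_{g^{-1}}(x)=txt^{\ast}=(p_2p,A,p_1p)(p_1p,A,p_1p)(p_1p,A,p_2p)=(p_2p,A,p_2p)\]
by applying the first clause of the multiplication rule twice. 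The only real obstacle is the free-group normal-form step in the second paragraph; the rest is bookkeeping with the multiplication table, the order on $E$, and the identity $r(r(B,\alpha),\beta)=r(B,\alpha\beta)$.
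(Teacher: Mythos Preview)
Your proof is correct and follows essentially the same three-step structure as the paper: read off $\varphi^{-1}(g)$ from the free-group normal form, show the set of $ss^{\ast}$ is already downward closed to get $E_g$, and compute $\phi_{g^{-1}}$ by picking a witness and applying the formula from Theorem~\ref{idempotentaction}. The only noteworthy difference is in the third step: you choose the witness $t=(p_2p,A,p_1p)$ tailored to each $x$, so that $t^{\ast}t=x$ on the nose and the conjugation is a one-line application of the ``$\beta=\gamma$'' clause, whereas the paper fixes a single universal witness $s=(p_1,\,r(p_1)\cap r(p_2),\,p_2)$ and must invoke the weakly left-resolving hypothesis to verify $x\le ss^{\ast}$ before computing $s^{\ast}xs$. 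Your choice is cleaner and sidesteps that hypothesis in this step; the paper's choice has the mild conceptual advantage of exhibiting one element of $\varphi^{-1}(g)$ that dominates all of $E_g$ at once.
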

	
	\begin{proof} Throughout, let $g = p_1p_2^{-1}$ be a reduced representation.
		
		We first show that \[\varphi^{-1}(g) = \{(p_1p, A, p_2p) \colon  p \in \mathcal L^{\ast}, \emptyset \neq A \in \mathcal B_{p_1p} \cap \mathcal B_{p_2p}\}\] First note that when $\emptyset \neq A \in \mathcal \mathcal B_{p_1p} \cap \mathcal B_{p_2p}$, then $(p_1p, A, p_2p)$ is actually an element of $S$. Furthermore, $\varphi(p_1p, p_2p) = (p_1p)(p_2p)^{-1} = p_1p_2^{-1}$ so we are done. In the other direction, if $\varphi((\alpha, A, \beta)) = \alpha \beta^{-1} = p_1p_2^{-1}$, by the reduced nature of $p_1p_2^{-1}$ we would have that $\alpha = p_1p$ and $\beta = p_2p$ for some $p \in \mathcal L^{\ast}$, so we are done.
		
		We now show that 	\[E_g = \{(p_1p, A, p_1p) \colon  p \in \mathcal L^{\ast}, \emptyset \neq A \in \mathcal B_{p_1p} \cap \mathcal B_{p_2p}\} \cup \{0\} \] We write $E_g = \bigcup_{s \in \varphi^{-1}(g)} \{x \in E \colon x \leq ss^{\ast}\}$. By our result in the last paragraph, we calculate that $\{ss^{\ast} \colon s \in \varphi^{-1}(g)\} = \{(p_1p, A, p_1p) \colon \emptyset \neq A \in \mathcal B_{p_1p} \cap \mathcal B_{p_2p}\}$. We will show that this set is closed downwards (besides $0$) so we get that this set with $0$ is actually equal to $E_g$.
		
		Let $(\alpha, B, \alpha) \leq (p_1p, A, p_1p)$. Then by our characterization of the order of idempotents, we have that $\alpha = p_1p\gamma$ and $B \subseteq r(A, \gamma)$. This means that $B \in \mathcal B_{p_1p\gamma} \cap \mathcal B_{p_2p\gamma}$. Hence, we find that $(\alpha, B, \alpha) = (p_1(p\gamma), B, p_1(p\gamma))$ with $B \in \mathcal B_{p_1p\gamma} \cap \mathcal B_{p_2p\gamma}$ which is also in our set, so we are done.
		
		We now show that \[\phi_{g^{-1}}: E_g \rightarrow E_{g^{-1}} \text{ takes } (p_1p, A, p_1p) \mapsto (p_2p, A, p_2p)\]
		
		We already know that $(p_1p, A, p_1p) \in E_g$ means that $A \in \mathcal B_{p_1p} \cap \mathcal B_{p_2p}$. Consider $s = (p_1, r(p_1) \cap r(p_2), p_2) \in \varphi^{-1}(g)$. Then note that $ss^{\ast} = (p_1, r(p_1) \cap r(p_2), p_1)$. We know that $r(r(p_1) \cap r(p_2), p) = r(p_1p) \cap r(p_2p)$ (because the space is weakly left-resolving) which is maximal in $\mathcal B_{p_1p} \cap \mathcal B_{p_2p}$. Hence, $A \subseteq r(r(p_1) \cap r(p_2), p)$ so $(p_1p, A, p_1p) \leq ss^{\ast}$. By our construction, this means that \[\phi_{g^{-1}}(p_1p, A, p_1p) = (p_2, r(p_1) \cap r(p_2), p_1)(p_1p, A, p_1p)(p_1, r(p_1) \cap r(p_2), p_2) = (p_2p, A, p_2p)\] by an easy calculation, so we are done.
	\end{proof}
	
	\begin{lemma}
		The inverse semigroup $(S_{\mathcal E, \mathcal L, \mathcal B}, \varphi)$ is orthogonal and semi-saturated.
	\end{lemma}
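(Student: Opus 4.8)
The plan is to invoke the preceding theorem characterizing orthogonal and semi-saturated partial actions: it suffices to check the two combinatorial conditions (i) $E_a \cap E_b = \{0\}$ for $a \neq b \in \mathcal A$, and (ii) the inclusion $E_{st} \subseteq E_s$ whenever $s, t \in \mathbb F[\mathcal A]$ satisfy $|s+t| = |s|+|t|$ (the inclusion equivalent to semi-saturation, cf.\ Remark~\ref{equivsemi}). Both will follow purely from the explicit descriptions of $E_g$ and $\varphi^{-1}(g)$ established above, together with two elementary facts about labelled spaces: any prefix, suffix, or factor of a word in $\mathcal L^{\ast}$ again lies in $\mathcal L^{\ast}$, and $r(\alpha\beta) \subseteq r(\beta)$ for $\alpha, \beta \in \mathcal L^{\ast}$, so $\mathcal B_{\alpha\beta} \subseteq \mathcal B_{\beta}$. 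Recall also that $E_g = \{0\}$ unless $g$ has a reduced expression $g = p_1 p_2^{-1}$ with $p_1, p_2 \in \mathcal L^{\ast}$, in which case $E_g = \{(p_1 p, A, p_1 p) : p \in \mathcal L^{\ast},\ \emptyset \neq A \in \mathcal B_{p_1 p} \cap \mathcal B_{p_2 p}\} \cup \{0\}$.

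For orthogonality, the reduced form of a generator $a \in \mathcal A$ is $a\omega^{-1}$, so every nonzero element of $E_a$ has the shape $(ap, A, ap)$ whose first coordinate begins with the letter $a$; likewise every nonzero element of $E_b$ has first coordinate beginning with $b$. A single idempotent cannot have a first coordinate beginning with both $a$ and $b$, hence $E_a \cap E_b = \{0\}$.

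For semi-saturation, fix $s, t$ with $|s+t| = |s|+|t|$. If $E_{st} = \{0\}$ there is nothing to prove, so assume $st$ has reduced form $q_1 q_2^{-1}$ with $q_1, q_2 \in \mathcal L^{\ast}$, where $q_1$ is the maximal positive prefix. Since there is no cancellation between $s$ and $t$, the word $s$ is a prefix of the reduced word $q_1 q_2^{-1}$, and I would split into two cases. If $|s| \le |q_1|$, then $s$ is a prefix of $q_1$, so $s \in \mathcal L^{\ast}$ and $q_1 = s q_1'$ with $q_1' \in \mathcal L^{\ast}$; the reduced form of $s$ is $s\omega^{-1}$, and a nonzero $(q_1 p, A, q_1 p) \in E_{st}$ rewrites as $(s(q_1' p), A, s(q_1' p))$ with $q_1' p \in \mathcal L^{\ast}$ (a suffix of the label $q_1 p$) and $A \in \mathcal B_{q_1 p} = \mathcal B_{s(q_1' p)}$, so it lies in $E_s$. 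If $|s| > |q_1|$, then $s$ extends into $q_2^{-1}$: writing $q_2 = m' m$ with $|m| = |s| - |q_1| \ge 1$ gives $s = q_1 m^{-1}$ with $q_1, m \in \mathcal L^{\ast}$; reducedness of $q_1 q_2^{-1}$ forces the last letters of $q_1$ and $m$ to differ, so $q_1 m^{-1}$ is itself reduced, whence $E_s = \{(q_1 p, A, q_1 p) : p \in \mathcal L^{\ast},\ \emptyset \neq A \in \mathcal B_{q_1 p} \cap \mathcal B_{m p}\} \cup \{0\}$. A nonzero $(q_1 p, A, q_1 p) \in E_{st}$ then satisfies $A \in \mathcal B_{q_1 p} \cap \mathcal B_{q_2 p}$, and since $q_2 p = m'(mp)$ we have $\mathcal B_{q_2 p} \subseteq \mathcal B_{mp}$, so $A \in \mathcal B_{q_1 p} \cap \mathcal B_{mp}$ and the element lies in $E_s$. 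Thus $E_{st} \subseteq E_s$ in all cases.

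I expect the only real obstacle to be bookkeeping: correctly tracking, in each case, which positive words play the roles of $p_1$ and $p_2$ in the reduced expressions of $s$ and of $st$, and confirming that every subword of $st$ that appears is a genuine label (an element of $\mathcal L^{\ast}$) and not merely an abstract word in $\mathbb F[\mathcal A]$ — immediate since prefixes, suffixes and factors of labels are labels, but the point that must be handled with care. Beyond that the verification is entirely combinatorial once the descriptions of $E_g$ and $\varphi^{-1}(g)$ are in hand.
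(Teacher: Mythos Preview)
Your proof is correct and follows essentially the same strategy as the paper's: both arguments reduce to the explicit description of $E_g$ and verify the two combinatorial conditions directly. The orthogonality argument is identical.

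For semi-saturation the organization differs slightly. The paper first argues that if either $s$ or $t$ fails to have the ``correct form'' $p_1p_2^{-1}$ then so does $st$, then assumes $s = p_{1,s}p_{2,s}^{-1}$ and $t = p_{1,t}p_{2,t}^{-1}$ and observes that no cancellation together with $st$ having correct form forces $p_{2,s}=\omega$ or $p_{1,t}=\omega$; it then checks the inclusion by exhibiting the \emph{maximal} element of $E_{st}$ and using downward closure of $E_s$. You instead start from the reduced form $st = q_1q_2^{-1}$ and split on whether $s$ lies entirely in the positive part ($|s|\le |q_1|$) or extends into the negative part ($|s|>|q_1|$), verifying the inclusion element by element via the containment $\mathcal B_{q_2p}\subseteq \mathcal B_{mp}$. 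Your decomposition is a little more streamlined in that it never needs to reason about the shape of $t$ separately, while the paper's use of the maximal element avoids the pointwise check; both are short and the differences are cosmetic.
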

	\begin{proof}
		We first prove that it is orthogonal. Let $a \neq b \in \mathcal A$. $E_a \cap E_b = \{0\}$ from the characterization of $E_a$ and $E_b$ because the associated path $\alpha$ for $(\alpha, A, \alpha) \in E_a$ must begin with $a$, while it must begin with $b$ in $E_b$. Hence, the only shared element is $0$.
		
		We now prove that it is semi-saturated. Let $s, t \in \mathbb F[\mathcal A]$ with $|s+t| = |s| + |t|$. We want to prove that $E_{st} \subseteq E_s$. We say $g \in \mathbb F[\mathcal A]$ has \textit{correct form} if its reduced form can be written as $p_1p_2^{-1}$. Namely, if $g$ is not of correct form then $E_g = \{0\}$.
		
		If $s$ or $t$ is not of correct form and $|st| = |s| + |t|$, then $st$ is not of correct form either so $\{0\}= E_{st} \subseteq E_s$ is obvious. Thus, from now on we can assume that $s = p_{1, s}p_{2, s}^{-1}$ and $t = p_{1, t}p_{2, t}^{-1}$. 
		
		If $|s+t| = |s| + |t|$ and $st$ is of correct form, then we must either have that $p_{2, s} = \omega$ or $p_{1, t} = \omega$. The proofs of each case are similar, so we only treat the first.
		
		If $p_{2, s} = \omega$, then we want to prove that $E_{p_{1, s}t} \subseteq E_{p_{1, s}}$. Because $st$ is reduced, we know that $st = (p_{1, s}p_{1, t})p_{2, t}^{-1}$ is reduced. We can show that the maximal element in $E_{p_{1, s}t}$ is $(p_{1, s}p_{1, t}, r(p_{1, s}p_{1, t}) \cap r(p_{2, t}), p_{1, s}p_{1, t})$. This is in in $E_{p_{1, s}}$, so we are done because the maximal element of $E_{p_{1, s}t}$ is in $E_{p_{1, s}}$ and $E_{p_{1, s}t}$ is downwards closed.
	\end{proof}
	
	\begin{lemma}
		There is a morphism from $L_R(\mathcal E, \mathcal L, \mathcal B) \rightarrow L_R(S_{\mathcal E, \mathcal L, \mathcal B})$ given by the maps on generators 
		
		\[p_B \mapsto (\omega, B, \omega)\delta_e, s_{a} \mapsto (a, r(a), a)\delta_a, \text{ and } s^{\ast}_a \mapsto (\omega, r(a), \omega)\delta_{a^{-1}}\]
	\end{lemma}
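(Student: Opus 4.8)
The plan is to use the universal property of the labelled Leavitt path algebra: $L_R(\mathcal E,\mathcal L,\mathcal B)$ is by definition the $R$-algebra presented by the generators $\{p_B\}_{B\in\mathcal B}\cup\{s_a,s_a^{\ast}\}_{a\in\mathcal A}$ modulo the Cuntz--Krieger relations (1)--(5), so to produce the homomorphism it suffices to check that the proposed images $(\omega,B,\omega)\delta_e$, $(a,r(a),a)\delta_a$, and $(\omega,r(a),\omega)\delta_{a^{-1}}$ satisfy those same five families of relations inside $L_R(S_{\mathcal E,\mathcal L,\mathcal B})$. Every such check reduces to the product formulas of Lemma~\ref{computationinverse}, the multiplication rule and idempotent order of $S_{\mathcal E,\mathcal L,\mathcal B}$, and the explicit description $\phi_{a^{-1}}(ap,C,ap)=(p,C,p)$ (equivalently $\phi_a(p,C,p)=(ap,C,ap)$ on $E_{a^{-1}}$).

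First I would dispatch the routine relations. For (1): Lemma~\ref{computationinverse}(5) gives $(\omega,A,\omega)\delta_e\cdot(\omega,B,\omega)\delta_e=(\omega,A\cap B,\omega)\delta_e$; since the triple $(\omega,\emptyset,\omega)$ is $0$ in $S$ we get $p_{\emptyset}\mapsto 0$; and $p_{A\cup B}=p_A+p_B-p_{A\cap B}$ follows by applying Lemma~\ref{computationinverse}(10) to the two-element finite cover $\{(\omega,A,\omega),(\omega,B,\omega)\}$ of $(\omega,A\cup B,\omega)$ in $E$ (it is a cover because $r(A\cup B,\gamma)=r(A,\gamma)\cup r(B,\gamma)$). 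For (2): Lemma~\ref{computationinverse}(5) gives $p_As_a\mapsto(a,r(A,a),a)\delta_a$, and Lemma~\ref{computationinverse}(6) together with the formula for $\phi_{a^{\pm1}}$ gives $s_ap_{r(A,a)}\mapsto(a,r(A,a),a)\delta_a$ as well; the relation $s_a^{\ast}p_A=p_{r(A,a)}s_a^{\ast}$ is handled symmetrically. For (3): Lemma~\ref{computationinverse}(4) and the multiplication rule give $(\omega,r(a),\omega)\delta_{a^{-1}}\cdot(b,r(b),b)\delta_b$ equal to $(\omega,r(a),\omega)\delta_e$ when $a=b$ and to $0$ when $a\neq b$ (the latter because $s_a^{\ast}s_b=0$ already in $S$). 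For (4): Lemma~\ref{computationinverse}(9) gives $s_as_a^{\ast}\mapsto(a,r(a),a)\delta_e$, after which Lemma~\ref{computationinverse}(5) yields $s_as_a^{\ast}s_a\mapsto s_a$, and $s_a^{\ast}s_as_a^{\ast}=s_a^{\ast}$ is symmetric.

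The one substantive step is relation (5). Using Lemma~\ref{computationinverse}(8) and the formula for $\phi_{a^{\pm1}}$, one computes $s_ap_{r(A,a)}s_a^{\ast}\mapsto(a,r(A,a),a)\delta_e$ for each $a\in\Delta_A$, so the claim reduces to $(\omega,A,\omega)\delta_e=\sum_{a\in\Delta_A}(a,r(A,a),a)\delta_e$. I would establish this via two observations about the idempotents $x_a\coloneqq(a,r(A,a),a)$ for $a\in\Delta_A$: (i) they are pairwise orthogonal in $E$, since for $a\neq b$ the product $x_ax_b$ vanishes in $S$ (the defining words begin with distinct single letters); and (ii) $\{x_a\}_{a\in\Delta_A}$ is a finite cover of $(\omega,A,\omega)$ in $E$. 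For (ii), which is the only place regularity of $A$ is used, take $0\neq(\gamma,C,\gamma)\leq(\omega,A,\omega)$, so $\emptyset\neq C\subseteq r(A,\gamma)$: if $\gamma$ begins with a letter $b$ then $b\in\Delta_A$ and $x_b\wedge(\gamma,C,\gamma)=(\gamma,C,\gamma)\neq 0$; if $\gamma=\omega$ then $C\subseteq A$, and $C\not\subseteq\mathcal E_{\text{sink}}$ (otherwise $\emptyset\neq C\subseteq A\cap\mathcal E_{\text{sink}}$ with $C\in\mathcal B$ would contradict $A\in\mathcal B_{\text{reg}}$), so some $a\in\Delta_C\subseteq\Delta_A$ gives $x_a\wedge(\omega,C,\omega)=(a,r(C,a),a)\neq 0$ using weak left-resolvingness; finiteness is $|\Delta_A|<\infty$. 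Now Lemma~\ref{computationinverse}(10) expresses $(\omega,A,\omega)\delta_e$ as the inclusion--exclusion sum over nonempty $J\subseteq\Delta_A$ of $\bigl(\prod_{j\in J}x_j\bigr)\delta_e$, and by (i) every term with $|J|\geq 2$ vanishes, leaving $(\omega,A,\omega)\delta_e=\sum_{a\in\Delta_A}x_a\delta_e$, which is exactly the image of relation (5).

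Having verified (1)--(5), the universal property of $L_R(\mathcal E,\mathcal L,\mathcal B)$ yields the desired (unique) $R$-algebra homomorphism. The main obstacle is the cover-and-orthogonality argument for (5): it is the only point where the structural hypotheses on the labelled space (in particular weak left-resolvingness and the regularity of $A$) genuinely enter, and the cover claim requires the case analysis on whether $\gamma$ is the empty word together with the characterization of sinks inside $\mathcal B$.
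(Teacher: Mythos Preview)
Your proposal is correct and follows essentially the same approach as the paper: both verify the five Cuntz--Krieger relations on the proposed images using the computational rules of Lemma~\ref{computationinverse}, with the substantive step being relation~(5), handled in both cases by showing that $\{(a,r(A,a),a)\}_{a\in\Delta_A}$ is a pairwise-orthogonal finite cover of $(\omega,A,\omega)$ and then applying the inclusion--exclusion formula~(10). The only cosmetic difference is in relation~(3): the paper observes that the product lands in grading $a^{-1}b$ where $E_{a^{-1}b}=\{0\}$, whereas you compute the underlying inverse-semigroup product directly; both arguments are valid and equally short.
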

	
	\begin{proof}
		As $L_R(\mathcal E, \mathcal L, \mathcal B)$ is universal, it suffices to check that the Cuntz-Krieger relations hold in the image. Here, we often make references to Lemma~\ref{computationinverse} without explicitly mentioning it.
		
		For ease of reference, we repeat the Cuntz-Kreiger relations below (found in \cite[Definition~3.1]{Boava2021LeavittPA}).
		
		\begin{enumerate} 
			\item $p_{A \cap B} = p_A p_B$, $p_{A \cup B} = p_{A} + p_B - p_{A \cap B}$, $p_{\emptyset} = 0$ for $A, B \in \mathcal B$
			\item $p_A s_a = s_a p_{r(A, a)}$ and $s_a^{\ast}p_A = p_{r(A, a)}s_a^{\ast}$ for $A \in \mathcal B$ and $a \in \mathcal L$
			\item $s_a^{\ast}s_b = \delta_{a, b} p_{r(a)}$ for $a, b \in \mathcal L$
			\item $s_a s_a^{\ast} s_a = s_a$ and $s_a^{\ast}s_as_a^{\ast} = s_a^{\ast}$ for $a \in \mathcal L$.
			\item For all $A \in \mathcal B_{\text{reg}}$, $p_A = \sum_{a \in \Delta_A} s_a p_{r(A, a)}s_a^{\ast}$
		\end{enumerate}
		
		We now check that each of these relations hold for their associated elements.
		\begin{enumerate}
			\item $p_Ap_B = ((\omega, A, \omega) \delta_e)((\omega, B, \omega))\delta_e = (\omega, A \cap B, \omega) = p_{A \cap B}$. $p_A + p_B = p_{A \cup B}$ holds from noting that $\{(\omega, A, \omega), (\omega, B, \omega)\}$ forms a finite cover for $(\omega, A \cup B, \omega)$ so we can use the inclusion exclusion calculation (10) in Lemma~\ref{computationinverse}.
			
			\item We have that  \[p_A s_a = ((\omega, A, \omega)\delta_e)((a, r(a), a) \delta_a) = ((\omega, A, \omega)(a, r(a),a))\delta_a = (a, r(A, a), a) \delta_a\] \[s_ap_{r(A, a)} = ((a, r(a), a) \delta_a)((\omega, r(A, a), \omega)\delta_e) = \phi_a((\omega, r(A, a), \omega)\phi_{a^{-1}}((a, r(a),a))\delta_a = \]\[\phi_a((\omega,r(A, a), \omega)(\omega, r(a), \omega))  \delta_a = \phi_a((\omega, r(A, a), \omega)) \delta_a = (a, r(A, a), a)\delta_a\] The analogous $s_a^{\ast}p_A = p_{r(A, a)}s_a^{\ast}$ is proved similarly.
			
			\item $s_a^{\ast}s_b = ((\omega, r(a), \omega)\delta_{a^{-1}})((b, r(b), b)\delta_b) = x \delta_{a^{-1}b}$ for some $x \in E_{a^{-1}b}$ and thus must be $0$ if $a \neq b$ because $E_{a^{-1}b} = \{0\}$. If $a = b$, then by noting that $(a, r(a), a) = \phi_{a}(\omega, r(a), \omega)$, we can immediately deduce from Lemma~\ref{computationinverse} (9) that the result is $(\omega, r(a), \omega) \delta_e = p_{r(a)}$.
			
			\item $s_as_a^{\ast}s_a = s_a p_{r(a)}$ by (3). By the same calculations as in (2), we can deduce that $s_ap_{r(a)} = (a, r(a), a) \delta_a = s_a$. The relation $s_a^{\ast}s_as_a^{\ast} = s_a^{\ast}$ is proved in the same way.
			
			\item This will be the most involved check. Let $A \in \mathcal B_{\text{reg}}$. We first calculate $s_ap_{r(A, a)}s_a^{\ast}$ using (8) in Lemma~\ref{computationinverse}.
			
			\[((a, r(a), a)\delta_a)((\omega, r(A, a), \omega)\delta_e)((\omega, r(a), \omega)\delta_{a^{-1}}) = \phi_a((\omega, r(A, a), \omega)\phi_{^{-1}}(a, r(a), a)) \delta_e =\]\[\phi_a((\omega, r(A, a), \omega)(\omega, r(a), \omega)) \delta_e = (a, r(A, a), a) \delta_e\]
			
			Our goal is to show that $\{(a, r(A, a), a)\}_{a \in \Delta_A}$ is a cover for $(\omega, A, \omega)$. Note that $(a, r(A, a), a)(b, r(A, b), b) = 0$ for $a \neq b$. Hence all the products with more than one element would be $0$ in (10) Lemma~\ref{computationinverse}, so we would have \[p_A = (\omega, A, \omega)\delta_e = \sum_{a \in \Delta_a} (a, r(A, a), a) \delta_e = \sum_{a \in \Delta_a} s_ap_{r(A, a)} s_a^{\ast}\] and we would be done.
			
			We have that $(a, r(A, a), a) \leq (\omega, A, \omega)$, so $(a, r(A, a), a) \in (\omega, A, \omega)^-$ so the set can be a cover.
			
			Now let $(\gamma, B, \gamma) \leq (\omega, A, \omega)$. The only condition here is that $\emptyset \neq B \subseteq r(A, \gamma)$. We want to prove that there exists some $a \in \Delta_a$ such that $(\gamma, B, \gamma)(a, r(A, a), a) \neq 0$.
			
			First assume that $|\gamma| = 0 \Rightarrow B \subseteq A$. Because $A$ is regular, $B$ cannot be a subset of $\mathcal E_{\text{sink}}$. Hence, there must be some $a \in \Delta_B \subseteq \Delta_A$ such that $r(B, a) \neq 0$. For this $a \in \Delta_A$, we then calculate that $(\omega, B, \omega)(a, r(A, a), a) = (a, r(A, a) \cap r(B, a), a) = (a, r(B, a), a) \neq 0$, so we are done.
			
			Now assume that $|\gamma| \geq 1$ and consider the first character $\gamma_1 \in \mathcal A$. Note that $\gamma_1 \in \Delta_A$ because $\emptyset \neq B \subseteq r(A, \gamma)$ implies that $r(A, \gamma_1)\neq \emptyset$. In this case, we can see that $(\gamma, B, \gamma) \leq (\gamma_1, r(A, \gamma_1), \gamma_1)$ because $B \subseteq r(A, \gamma)$, so clearly $(\gamma, B, \gamma)(\gamma_1, r(A, \gamma_1), \gamma_1) \neq 0$, so we are done.
		\end{enumerate}
	\end{proof}
	
	\begin{lemma}
		The morphism from $L_R(\mathcal E, \mathcal L, \mathcal B) \rightarrow L_R(S_{(\mathcal E, \mathcal L, \mathcal B)})$ is an isomorphism.
	\end{lemma}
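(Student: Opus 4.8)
The plan is to prove that the morphism $\Psi \colon L_R(\mathcal E, \mathcal L, \mathcal B) \to L_R(S_{(\mathcal E, \mathcal L, \mathcal B)})$ from the previous lemma is both surjective and injective.

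For surjectivity I would exploit that $(S_{(\mathcal E, \mathcal L, \mathcal B)}, \varphi)$ is semi-saturated and apply Corollary~\ref{saturatedsemigroup}. A short computation with the order on idempotents shows that $(a, r(a), a)$ is the largest element of $E_a$ and $(\omega, r(a), \omega)$ is the largest element of $E_{a^{-1}}$, so I may take $C_a = \{(a, r(a), a)\}$ and $C_{a^{-1}} = \{(\omega, r(a), \omega)\}$ in that corollary; then $L_R(S_{(\mathcal E, \mathcal L, \mathcal B)})$ is generated as an $R$-algebra by $\{(\alpha, A, \alpha)\delta_e\}_{(\alpha,A,\alpha) \in E} \cup \{(a, r(a), a)\delta_a\}_{a \in \mathcal A} \cup \{(\omega, r(a), \omega)\delta_{a^{-1}}\}_{a \in \mathcal A}$. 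The last two families are precisely $\Psi(s_a)$ and $\Psi(s_a^{\ast})$. For the first, writing $\alpha = \alpha_1\cdots\alpha_n$ and $s_\alpha = s_{\alpha_1}\cdots s_{\alpha_n}$, I would prove by induction on $|\alpha|$, using parts (4) and (6) of Lemma~\ref{computationinverse}, that $\Psi(s_\alpha) = (\alpha, r(\alpha), \alpha)\delta_\alpha$ and $\Psi(s_\alpha^{\ast}) = (\omega, r(\alpha), \omega)\delta_{\alpha^{-1}}$, and hence $\Psi(s_\alpha p_A s_\alpha^{\ast}) = (\alpha, A, \alpha)\delta_e$ for every $A \in \mathcal B_\alpha$. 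This puts every generator in the image, so $\Psi$ is onto.

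For injectivity I would apply the graded uniqueness theorem for labelled Leavitt path algebras from \cite{Boava2021LeavittPA}. Both algebras are $\mathbb Z$-graded: $L_R(\mathcal E, \mathcal L, \mathcal B)$ via $\deg p_A = 0$, $\deg s_a = 1$, $\deg s_a^{\ast} = -1$ (each Cuntz-Krieger relation is homogeneous for this assignment), and $L_R(S_{(\mathcal E, \mathcal L, \mathcal B)})$ via the composite of its $\mathbb F[\mathcal A]$-grading with the homomorphism $\mathbb F[\mathcal A] \to \mathbb Z$ sending each generator to $1$; by construction $\Psi$ respects these gradings. It then remains to check the hypothesis that $\Psi(r p_A) \neq 0$ whenever $0 \neq r \in R$ and $\emptyset \neq A \in \mathcal B$. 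Here $\Psi(r p_A) = r\, 1_{V_{(\omega, A, \omega)}}\delta_e$, and since $(\omega, A, \omega)$ is a nonzero idempotent of $S_{(\mathcal E, \mathcal L, \mathcal B)}$ we have $V_{(\omega, A, \omega)} \neq \emptyset$, so this element is nonzero by Lemma~\ref{computationinverse}(11). The graded uniqueness theorem then forces $\Psi$ to be injective, hence an isomorphism.

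The step I expect to be the main obstacle is injectivity, because it relies on a graded uniqueness theorem valid over an arbitrary commutative unital ring $R$. If one does not want to cite such a theorem, the alternative route is through Theorem~\ref{thm:isogroupoid}: identify $L_R(S_{(\mathcal E, \mathcal L, \mathcal B)}) \cong A_R(\mathcal G_{tight}(S_{(\mathcal E, \mathcal L, \mathcal B)}))$ and combine this with the known realization of $L_R(\mathcal E, \mathcal L, \mathcal B)$ as the Steinberg algebra of the tight groupoid of the labelled space. The surjectivity computations are routine but require care to keep the three-coordinate idempotent notation $(\alpha, A, \beta)$ consistent when pushing products through Lemma~\ref{computationinverse}.
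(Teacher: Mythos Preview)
Your proposal is correct and follows essentially the same approach as the paper: surjectivity via Corollary~\ref{saturatedsemigroup} with the singleton maximal elements $C_a=\{(a,r(a),a)\}$, $C_{a^{-1}}=\{(\omega,r(a),\omega)\}$ followed by the inductive computation of $\Psi(s_\alpha)$, $\Psi(s_\alpha^{\ast})$ and $\Psi(s_\alpha p_A s_\alpha^{\ast})$, and injectivity via the $\mathbb Z$-graded uniqueness theorem of \cite{Boava2021LeavittPA} combined with Lemma~\ref{computationinverse}(11). Your check of the hypothesis $\Psi(rp_A)\neq 0$ is in fact slightly more explicit than the paper's own treatment.
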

	
	\begin{proof}
		We can view the grading on $L_R(S_{(\mathcal E, \mathcal L, \mathcal B)})$ by $\mathbb F[\mathcal A]$ as a grading on $\mathbb Z$ because there is a natural morphism from $\mathbb F[\mathcal A] \rightarrow \mathbb Z$ taking $a \mapsto 1$ and $a^{-1} \mapsto -1$ for $a \in \mathcal A$. Under this grading by $\mathbb Z$, it's not difficult to check that the above morphism is a $\mathbb Z$-graded morphism. Hence we can use the graded uniqueness theorem \cite[Corollary~5.5]{Boava2021LeavittPA} and (11) Lemma~\ref{computationinverse} to show that the morphism is injective.
		
		To show that the morphism is surjective, we will use Corollary~\ref{saturatedsemigroup}. Note that for $a \in \mathcal A$, we have that $E_a = 0$ or has $\{(a, r(a), a)\}$ as a maximal element. Similarly, $E_{a^{-1}} = 0$ or has $\{(\omega, r(a), \omega)\}$ as a maximal element. Hence, in the notation of Corollary~\ref{saturatedsemigroup}, we can take $C_x = \{(a, r(a), a)\}$ and $C_{x^{-1}} = \{(\omega, r(a), \omega)\}$. These appear as the image of $s_a$ and $s_a^{\ast}$ respectively.
		
		Hence, all that remains to do is to prove that $x\delta_e$ is in the image for all $x \in E$. Let $x = (\alpha, A, \alpha)$ with $\alpha \in \mathcal L^{\ast}$ and $\emptyset \neq A \in \mathcal B_\alpha$. 
		
		If $|\alpha| = 0$, then $x \delta_e$ is the image of $p_A$. 
		
		Now assume that $|\alpha| \geq 1$. We will show that $(\omega, r(\alpha), \omega)\delta_{\alpha^{-1}}$ and $(\alpha, r(\alpha), \alpha) \delta_{\alpha}$ are in the image (whenever well-defined). If we can do this, then for any $A \in \mathcal B_{\alpha}$, we can calculate \[((\alpha, r(\alpha), \alpha)\delta_{\alpha})((\omega, A, \omega)\delta_e)((\omega, r(\alpha), \omega) \delta_{\alpha^{-1}}) = (\alpha, A, \alpha)\delta_e\] by a similar calculation of $s_ap_{r(A, a)}s_a^{\ast}$ in condition (5) of the last theorem.
		
		Hence, it remains to show that $(\omega, r(\alpha), \omega)\delta_{\alpha^{-1}}$ and $(\alpha, r(\alpha), \alpha) \delta_{\alpha}$ are in our image. The case $|\alpha| = 1$ is just the images of $s_a$ and $s_a^{\ast}$, so we can prove this inductively by showing that the following holds (whenever non-zero):
		
		\begin{enumerate}
			\item $((\omega, r(\alpha), \omega)\delta_{\alpha^{-1}}) ((\omega, r(\beta), \omega) \delta_{\beta^{-1}}) = (\omega, r(\beta\alpha), \omega)\delta_{(\beta\alpha)^{-1}}$
			\item 	$((\alpha, r(\alpha), \alpha) \delta_{\alpha})(\beta, r(\beta), \beta) \delta_{\beta} = (\alpha\beta, r(\alpha\beta), \alpha\beta) \delta_{\alpha\beta}$
		\end{enumerate}
		
		(1) can be calculated as follows:
		
		\[((\omega, r(\alpha), \omega)\delta_{\alpha^{-1}}) ((\omega, r(\beta), \omega) \delta_{\beta^{-1}}) = \phi_{\alpha^{-1}}((\omega, r(\beta), \omega)\phi_{\alpha}((\omega, r(\alpha), \omega)))\delta_{(\beta\alpha)^{-1}} = \]
		\[\phi_{\alpha^{-1}}((\omega, r(\beta), \omega)(\alpha, r(\alpha), \alpha))\delta_{(\beta\alpha)^{-1}} = \phi_{\alpha^{-1}}((\alpha, r(\beta\alpha) \cap r(\alpha), \alpha))\delta_{(\beta\alpha)^{-1}} =\] \[(\omega, r(\beta\alpha) \cap r(\alpha), \omega)\delta_{(\beta\alpha)^{-1}} = (\omega, r(\beta\alpha), \omega) \delta_{(\beta\alpha)^{-1}}\]
		
		(2) can be calculated as follows:
		
		\[((\alpha, r(\alpha), \alpha) \delta_{\alpha})(\beta, r(\beta), \beta) \delta_{\beta} = \phi_{\alpha}((\beta, r(\beta), \beta)\phi_{\alpha^{-1}}((\alpha, r(\alpha), \alpha)))\delta_{\alpha\beta} = \] \[ = \phi_{\alpha}((\beta, r(\beta), \beta)(\omega, r(\alpha), \omega))\delta_{\alpha\beta} = \phi_{\alpha}((\beta, r(\alpha\beta) \cap r(\alpha), \beta))\delta_{\alpha\beta} = \]\[(\alpha\beta, r(\alpha\beta) \cap r(\beta), \alpha\beta) \delta_{\alpha\beta} = (\alpha\beta, r(\alpha\beta), \alpha\beta) \delta_{\alpha\beta}\]
		
		Hence, the image of our morphism contains a set of generators for $L_R(S_{(\mathcal E, \mathcal L, \mathcal B)})$, so the map is surjective and we are done.
	\end{proof}
	
	\bibliographystyle{abbrv}
	\bibliography{GeneralizedBooleanAction.bib}
	
\end{document}